\newcommand{\squeeze}{\textstyle} % when deployed
\newcommand{\alertR}[1]{{\color{red}#1}}
\definecolor{bgcolor}{rgb}{0.85, 1.00, 0.85}
\icmltitlerunning{ADOM}
\newtheorem{theorem}{Theorem}
\newtheorem{lemma}{Lemma}
\newcommand{\proj}{\mathrm{proj}}
\newcommand{\range}{\mathrm{range}}
\newcommand{\lmax}{\lambda_{\max}}
\newcommand{\lminp}{\lambda_{\min}^+}
\newcommand{\g}{\nabla}
\newcommand{\ones}{\mathbf{1}}
\newcommand{\R}{\mathbb{R}}
\def\<#1,#2>{\langle #1,#2\rangle}
\newcommand{\norm}[1]{\|#1\|}
\newcommand{\sqn}[1]{\norm{#1}^2}
\newcommand{\cC}{\mathcal{C}}
\newcommand{\cN}{\mathcal{N}}
\newcommand{\cG}{\mathcal{G}}
\newcommand{\cV}{\mathcal{V}}
\newcommand{\cE}{\mathcal{E}}
\newcommand{\cL}{\mathcal{L}}
\newcommand{\cQ}{\mathcal{Q}}
\newcommand{\cO}{\mathcal{O}}
\newcommand{\mW}{\mathbf{W}}
\newcommand{\mI}{\mathbf{I}}
\newcommand{\mP}{\mathbf{P}}
\newcommand{\eqdef}{\coloneqq}
\DeclareMathOperator*{\argmin}{arg\,min}
\begin{document}

\twocolumn[
\icmltitle{ADOM: Accelerated Decentralized Optimization Method \\ for Time-Varying Networks}

% It is OKAY to include author information, even for blind
% submissions: the style file will automatically remove it for you
% unless you've provided the [accepted] option to the icml2021
% package.

% List of affiliations: The first argument should be a (short)
% identifier you will use later to specify author affiliations
% Academic affiliations should list Department, University, City, Region, Country
% Industry affiliations should list Company, City, Region, Country

% You can specify symbols, otherwise they are numbered in order.
% Ideally, you should not use this facility. Affiliations will be numbered
% in order of appearance and this is the preferred way.
%\icmlsetsymbol{equal}{*}

\begin{icmlauthorlist}
\icmlauthor{Dmitry Kovalev}{kaust}
\icmlauthor{Egor Shulgin}{kaust}
\icmlauthor{Peter Richt\'{a}rik}{kaust}
\icmlauthor{Alexander Rogozin}{mipt}
\icmlauthor{Alexander Gasnikov}{mipt}
\end{icmlauthorlist}

\icmlaffiliation{kaust}{King Abdullah University of Science and Technology, Thuwal, Saudi Arabia}
\icmlaffiliation{mipt}{Moscow Institute of Physics and Technology, Dolgoprudny, Russia}

\icmlcorrespondingauthor{Dmitry Kovalev}{dakovalev1@gmail.com}

% You may provide any keywords that you
% find helpful for describing your paper; these are used to populate
% the "keywords" metadata in the PDF but will not be shown in the document
%\icmlkeywords{Machine Learning, ICML}

\vskip 0.3in
]

% this must go after the closing bracket ] following \twocolumn[ ...

% This command actually creates the footnote in the first column
% listing the affiliations and the copyright notice.
% The command takes one argument, which is text to display at the start of the footnote.
% The \icmlEqualContribution command is standard text for equal contribution.
% Remove it (just {}) if you do not need this facility.

\printAffiliationsAndNotice{}  % leave blank if no need to mention equal contribution
%\printAffiliationsAndNotice{\icmlEqualContribution} % otherwise use the standard text.

\begin{abstract}
We propose {\sf ADOM} -- an accelerated method for smooth and strongly convex decentralized optimization over time-varying networks. {\sf ADOM} uses a dual oracle, i.e., we assume access to the gradient of the Fenchel conjugate of the individual loss functions. Up to a constant factor, which depends on the network structure only, its communication complexity is the same as that of accelerated Nesterov gradient method \citep{nesterov2003introductory}. To the best of our knowledge,  only the algorithm of \citet{rogozin2019optimal}  has a  convergence rate with similar properties. However, their algorithm converges under the very restrictive assumption that the number of network changes can not be greater than a tiny percentage of the number of iterations. This assumption is hard to satisfy in practice, as the network topology changes usually can not be controlled. In contrast, {\sf ADOM} merely requires the network to stay connected throughout time. 
\end{abstract}

\section{Introduction}\label{sec:intro}

We study the {\em decentralized optimization} problem
\begin{equation}\label{eq:main}
\squeeze
	\min \limits_{x\in\R^d} \sum \limits_{i=1}^n f_i(x),
\end{equation}
where each function $f_i\colon \R^d \rightarrow \R$ is stored on a compute node $i\in [n]\eqdef \{1,2,\ldots,n\}$. We assume that the nodes are connected through a {\em communication network} defined by an undirected connected graph. Each node can perform computations based on its local state and data, and can directly communicate  with its neighbors only. Further, we assume  the functions $f_i$ to be smooth and strongly convex. Such decentralized optimization  problems have been studied heavily  \cite{Gorbunov-Decentralized-Survey2020}, and  arise in many applications, incluyding  estimation by sensor networks \citep{rabbat2004distributed}, network resource allocation \citep{beck20141}, cooperative control \citep{giselsson2013accelerated}, distributed spectrum sensing \citep{bazerque2009distributed}, power system control \citep{gan2012optimal} and federated learning \citep{FL_survey_2020, D-DIANA}. 
When the network is not allowed to change in time, a lower communication complexity bound has been established by \citet{scaman2017optimal}. This bound is tight as there is a matching upper bound both in the case when a {\em dual oracle} is assumed \citep{scaman2017optimal}, which means that we have access to the gradient of the Fenchel conjugate of the functions $f_i(x)$, and also in the case when a {\em primal oracle} is assumed \citep{kovalev2020optimal}, which means that we have access to the gradient of the functions $f_i(x)$ themselves.

\subsection{Time-varying networks}

\begin{figure}[t]
	\centering
	\begin{tikzpicture}
		\node[] at (-0.32\linewidth, 0) (G1) {\includegraphics[width=0.3\linewidth]{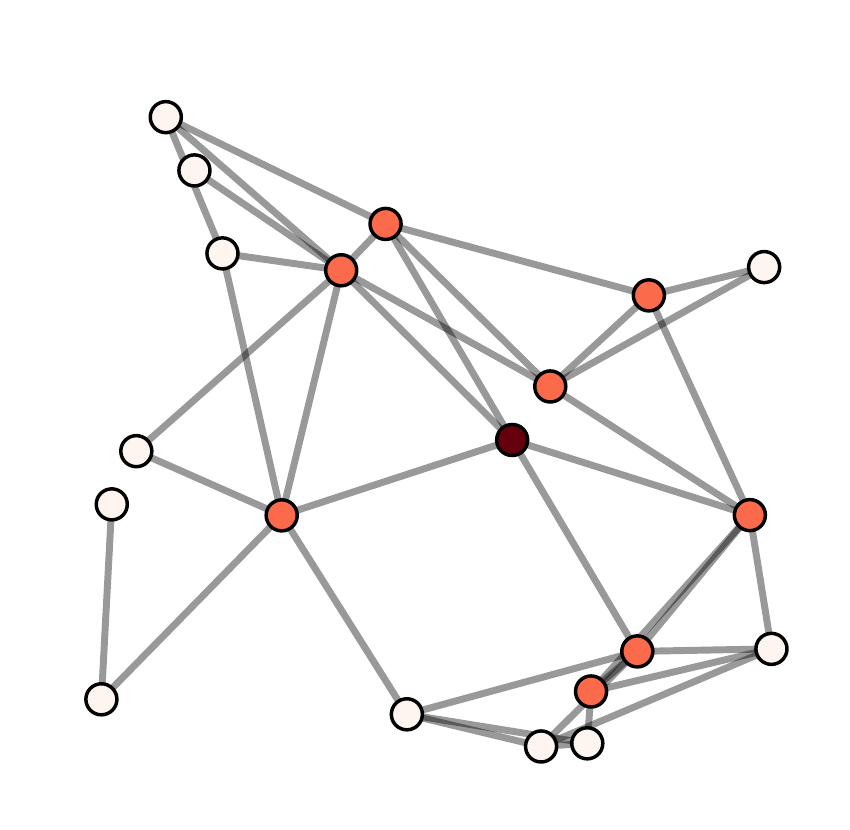}};
		\node[] at (0, 0) (G2) {\includegraphics[width=0.3\linewidth]{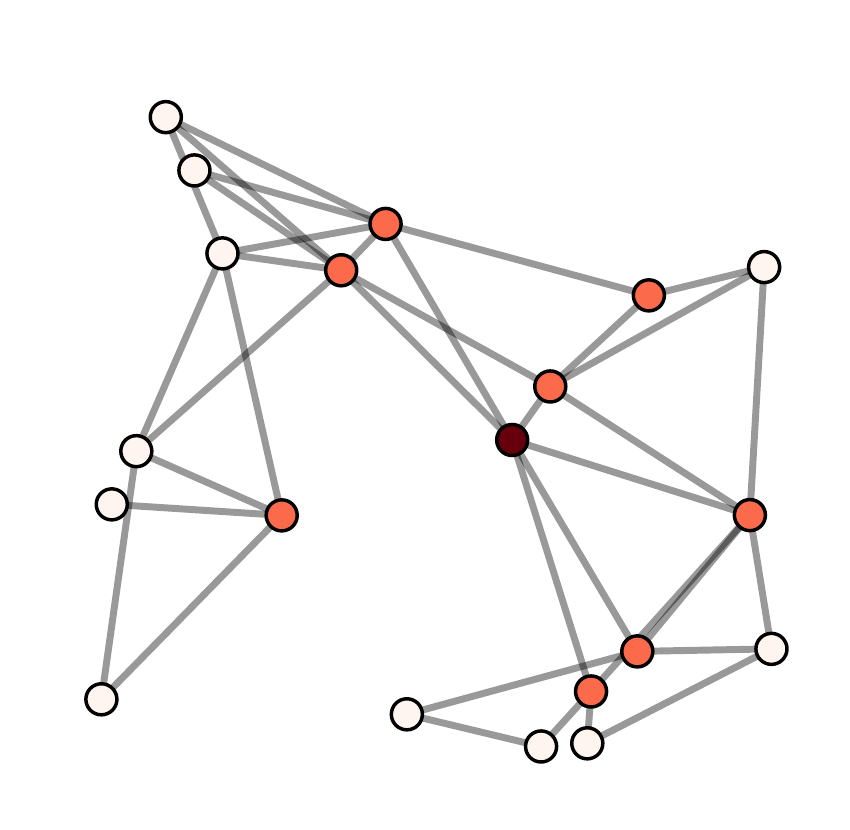}};
		\node[] at (0.32\linewidth, 0)(G3) {\includegraphics[width=0.3\linewidth]{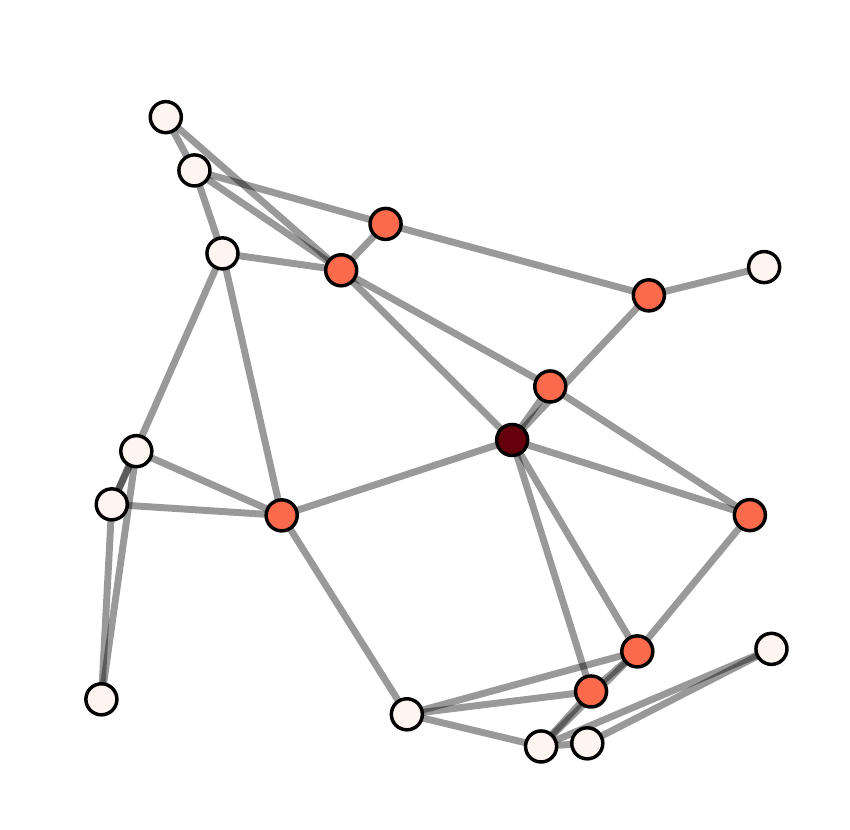}};
		\node[] at (-0.16\linewidth,0) {$\Rightarrow$};
		\node[] at (0.16\linewidth,0) {$\Rightarrow$};
	\end{tikzpicture}
	\caption{A sample time-varying network with $n=20$ nodes.}
	\label{fig:time_varying}
	\vspace{-1em}
\end{figure}

In this work, we study the situation when the links in the communication network are allowed to change over time (for an illustration, see Figure~\ref{fig:time_varying}). Such {\em time-varying networks} \cite{Zadeh1961, Kolar2010} are ubiquitous in many complex systems and practical applications. In sensor networks,  for example, changes in the link structure occur when the sensors are in motion, and due to other disturbances in the wireless signal connecting pairs of nodes.  We envisage that a similar regime will be supported in future-generation federated learning systems \cite{FEDLEARN, FL2017-AISTATS, D-DIANA}, where the communication pattern among pairs of mobile devices or mobile devices and edge servers will be dictated by their physical proximity, which naturally changes over time. Our work can be partially understood as an attempt to contribute to the algorithmic foundations of this nascent field.

\subsection{On the smooth and strongly convex regime}

As mentioned earlier, throughout this paper we  restrict each function $f_i(x)$ to be $L$-smooth and $\mu$-strongly convex. That is, we require that the inequalities
$	f_i(x) \geq f_i(y) + \<\g f_i(y), x-y> + \frac{\mu}{2}\sqn{x- y}$ and
$f_i(x)  \leq f_i(y) + \<\g f_i(y), x-y> + \frac{L}{2}\sqn{x- y}$
hold for all nodes $i\in [n]$ and all $x,y \in \R^d$. This naturally leads to the quantity \begin{equation}\label{eq:kappa}\kappa \eqdef L/\mu\end{equation} known as the {\em condition number} of   function $f$.  As we shall see, current understanding of decentralized optimization over time-varying networks is insufficient even in this setting, and we believe that the key  technical issues we face at present do not come from the difficulty of the function class, but from the algorithmic and modeling aspect of dealing with the decentralized and time-varying nature of the problem. Thus, focusing on smooth and strongly convex problems should not be seen as a weakness, but as a necessary step in the quest to  make a significant advance in our understanding of how efficient decentralized methods should be designed in the time-varying network regime.

\subsection{Methods for time-varying networks}

To the best of our knowledge, there is only a handful of  algorithms for solving the decentralized optimization problem \eqref{eq:main} that enjoy a {\em linear convergence rate} in the time-varying regime under smoothness and strong convexity assumptions. These include DIGing \citep{nedic2017achieving} and Push-Pull Gradient Method \citep{pu2020push}, which use the primal oracle, and PANDA \citep{maros2018panda}, which uses the dual oracle. While linear, their  rates are  slow in comparison to the best methods ``on the market'' at present (see Table~\ref{tbl:contributions-rates}).

A well known  mechanism for improving the convergence rates of standard gradient type methods is to apply or adapt Nesterov {\em acceleration} \citep{nesterov2003introductory}, whose goal is to reduce the dependence of the method on the  condition number $\kappa$ associated with the problem, the  condition number $\chi$ associated with the network structure (see \eqref{eq:chi} in Section~\ref{sec:chi}), or both. However, doing this is nontrivial in the decentralized time-varying setting. 
    
    % of which only APM provides the optimal $\cO(\sqrt{\kappa})$ dependence on $\kappa$. However, this method has dependence on the square of the logarithm $$

\section{Summary of Contributions}

We now briefly outline the main contributions: 

\subsection{New algorithm} In this paper we propose an accelerated algorithm---{\sf ADOM} (Algorithm~\ref{alg:dual})---for smooth and strongly convex decentralized optimization over time-varying networks. This algorithm uses the dual oracle, 
and is based on a careful generalization of the Projected Nesterov Gradient Descent method (PNGD; Algorithm~\ref{alg:nesterov}). 

\subsection{Convergence analysis} We prove that {\sf ADOM} enjoys the rate $\cO(\chi \kappa^{1/2} \log \frac{1}{\varepsilon})$ (see Thm~\ref{thm:main}), which matches the $\cO(\kappa^{1/2} \log \frac{1}{\varepsilon})$   rate of  PNGD in the special case of a fully connected time-invariant network.

\subsection{Innovations in the analysis} Our analysis requires several new insights and tools. First, we rely on the {\em new observation} that {\em  decentralized communication can be seen as the application of a certain  contractive compression operator} (see Section~\ref{sec:compression}). This  operator is linear, but may be {\em biased}, which raises significant challenges.  While the use of {\em unbiased} compression operators, such as sparsification  and quantization, is increasingly popular in  modern literature  on distributed optimization in the 
parameter server framework\footnote{Distributed optimization in a 
parameter server framework  is mathematically  equivalent to the setting where  communication happens over a  fully connected time-invariant network.}, we only know of a handful of results  combining compression with acceleration \citep{ADIANA, EC-Katyusha}. Of these, the first handles {\em unbiased} compressors only, and the second  is  the only work we know of successfully combining biased communication compression and acceleration. However, their work makes use of a different acceleration mechanism from ours, and it is not clear how to extend it to decentralized optimization.  We are not aware of any results combining  the use of biased compressors, acceleration and decentralized communication, even if we allow for the networks to be time-invariant. The observation that decentralized communication can be modeled as the application of a certain contractive compressor allows us to design a bespoke {\em error-feedback} mechanism,  previously studied in other settings by \citet{stich2019error, karimireddy2019error, biased2020, gorbunov2020linearly},  for achieving acceleration despite dealing with a biased  compressor.

\subsection{Comparison to accelerated methods designed for time-varying networks} 

While there were attempts to design  accelerated algorithms that could deal with time-varying networks, only several methods provide sub-quadratic dependence on $\chi$: 
    Acc-DNGD \citep{qu2019accelerated}, Mudag \citep{ye2020multi},  and the Accelerated Penalty Method (APM) \citep{rogozin2020towards, li2018sharp}. Acc-DNGD has $\cO(\chi^{3/2})$ dependence on $\chi$, which is worse than the linear dependence on $\chi$ shared by Mudag, APM and our method {\sf ADOM}. Moreover, Acc-DNGD has $\cO(\kappa^{5/7})$ dependence on $\kappa$ and Mudag has $\cO(\kappa^{1/2} \log \kappa)$ dependence, which is worse than the $\cO(\kappa^{1/2})$   dependence of APM and our method {\sf ADOM}. Lastly, APM has a square-logarithmic dependence on $1/\varepsilon$, which is worse than the dependence of all the other methods  on this quantity. These results are summarized in Table~\ref{tbl:contributions-rates}. In summary, {\em {\sf ADOM} achieves the new state-of-the-art rate for decentralized optimization over time-varying networks.}

\begin{table}[t]
	\centering
	\footnotesize
	\caption{
		A review of decentralized optimization algorithms capable of working in the time-varying network regime, with guarantees. 		Complexity terms  \alertR{highlighted in red} represent the best known dependencies.
		Our method is the only method with best known dependencies in all terms.
}
	\vspace{1em}
	\begin{tabular}{|c|c|}
		\hline
		\bf Algorithm & \bf Communication complexity\\
		\hline
		\hline
		\makecell{DIGing \\ \citet{nedic2017achieving}}& $\cO\left(n^{1/2}\chi^{2} \kappa^{3/2} \alertR{\log \frac{1}{\epsilon}}  \right)$\\
		\hline
		\makecell{PANDA\\ \citet{maros2018panda}}& $\cO\left(\chi^{2} \kappa^{3/2} \alertR{\log \frac{1}{\epsilon}}  \right)$\\
		\hline
		\makecell{Acc-DNGD\\ \citet{qu2019accelerated}}& $\cO\left(\chi^{3/2} \kappa^{5/7} \alertR{\log \frac{1}{\epsilon}} \right)$\\
		\hline
		\makecell{APM\\ \citet{li2018sharp}}& $\cO\left( \alertR{\chi \kappa^{1/2}} \log^2 \frac{1}{\epsilon}\right)$\\
		\hline
		\makecell{Mudag\\ \citet{ye2020multi}}& $\cO\left( \alertR{\chi \kappa^{1/2}}\log(\kappa) \alertR{\log \frac{1}{\epsilon} }\right)$\\
		\hline
		\hline
		\rowcolor{bgcolor}
		\makecell{\bf ADOM (Algorithm~\ref{alg:dual}) \\ \bf THIS PAPER}& $\cO\left(\alertR{\chi\kappa^{1/2} \log \frac{1}{\epsilon}}\right)$\\
		\hline
	\end{tabular}
	\label{tbl:contributions-rates}
	\vspace{-1em}
\end{table}

\subsection{Comparison to DNM of \citet{rogozin2019optimal}} 

We left one relevant method our from the above comparison -- the Distributed Nesterov Method (DNM) of \citet{rogozin2019optimal}. This method has $\cO(\chi^{1/2})$ dependence on $\chi$. However, DNM converges under the very restrictive assumption  requiring the {\em number of network changes to not exceed a tiny percentage of the number of iterations}. This assumption is hard to satisfy in practice, as the changes in the network topology usually can not be controlled and happen independently of the algorithm run.  In contrast, our algorithm just requires the network to be connected all the time. In Figure~\ref{fig:slow2} we give a representative comparison of the workings of our method ADOM and DNM in a regime where the number of network changes exceeds the theoretical limit. While ADOM converges,  DNM often diverges, which shows that DNM is not robust to the network dynamics, and that the restrictive assumption is crucial to their analysis.

\section{Problem Formulation and Projected Nesterov Gradient Descent}\label{sec:problem}

The design of our method is based on a particular reformulation of problem \eqref{eq:main}, which we now describe.

\subsection{Reformulation via Lifting}
Consider function $F\colon (\R^d)^\cV \rightarrow \R$ defined by
\begin{equation}\label{eq:F}
\squeeze F(x) = \sum \limits_{i\in\cV} f_i(x_i),
\end{equation}
where $x = (x_1,\ldots,x_n) \in (\R^d)^\cV$ and $\cV \eqdef [n]$ denotes the set of compute nodes. Then $F$ is $L$-smooth $\mu$-strongly convex since the individual functions $f_i$ are. Consider also the so called {\em consensus space} $\cL \subset (\R^d)^\cV$ defined by
\begin{equation}\label{eq:Lspace}
\cL \eqdef \{(x_1,\ldots,x_n )\in(\R^{d})^\cV: x_1 = \cdots = x_n\}.
\end{equation}
Using this notation, we arrive at an equivalent formulation of  problem \eqref{eq:main}, which we call the {\em primal formulation}:
\begin{equation}\label{eq:primal}
\min_{x \in \cL} F(x).
\end{equation}
Since the function $F(x)$ is strongly convex, this problem has a unique solution, which we denote as $x^* \in \cL$.

\subsection{Dual Problem}
It is a well known fact that problem \eqref{eq:primal} has an equivalent {\em dual formulation} of the form
\begin{equation}\label{eq:dual}
	\min_{z \in \cL^\perp} F^*(z),
\end{equation}
where $F^*$ is the Fenchel transform of $F$ and $\cL^\perp \subset (\R^d)^\cV$ is the orthogonal complement to the space $\cL$, given as follows:
\begin{equation}
\squeeze \cL^\perp = \left\{(z_1,\ldots,z_n )\in(\R^{d})^\cV: \sum_{i=1}^n z_i= 0\right\}.
\end{equation}
Note that the function $F^*(z)$ is $\frac{1}{\mu}$-smooth and $\frac{1}{L}$-strongly convex \citep{rockafellar1970convex}. Hence, problem \eqref{eq:dual} also has a unique solution, which we denote as $z^* \in \cL^\perp$.

\subsection{Projected Nesterov Gradient Descent}
A natural way to tackle problem \eqref{eq:dual} is to use a projected version of Nesterov's accelerated gradient method: Projected Nesterov Gradient Descent (PNGD) \citep{nesterov2003introductory}.
This algorithm requires us to calculate projection onto the set $\cL^\perp$, which can be written in the closed form
\begin{equation}\label{eq:proj}
	\proj_{\cL}(g) \eqdef \argmin_{z \in \cL^\perp} \sqn{g - z} = \mP g,
\end{equation}
where $g \in (\R^d)^\cV$ and $\mP$ is an orthogonal projection matrix onto the subspace $\cL^\perp$. Matrix $\mP$ is given as follows:
\begin{equation}\label{eq:P}
\squeeze \mP = \left(\mI_n - \frac{1}{n}\ones_ n\ones_n^\top\right)\otimes \mI_d,
\end{equation}
where $\mI_p$ denotes $p\times p$ identity matrix, $\ones_n = (1,\ldots,1)\in \R^n$, $\otimes$ is a Kronecker product. Note that
\begin{equation}\label{eq:PP}
\mP^2 = \mP.
\end{equation}
With this notation, PNGD is presented as Algorithm~\ref{alg:nesterov}.

\begin{algorithm}
	\caption{PNGD: Projected Nesterov Gradient Descent}
	\label{alg:nesterov}
	\begin{algorithmic}[1]
		\State {\bf input:} $z^0 \in \cL^\perp$, $\alpha,\eta,\theta > 0$, $\tau \in (0,1)$
		\State set $z_f^0 = z^0$
		\For{$k=0,1,2\ldots$}
		\State $z_g^k = \tau z^k + (1-\tau)z_f^k$ \label{nesterov:line:z1}
		\State $z^{k+1} = z^k + \eta\alpha (z_g^k - z^k)- \eta \mP\g F^*(z_g^k)$\label{nesterov:line:z2}
		\State $z_f^{k+1} = z_g^k - \theta \mP \g F^*(z_g^k)$\label{nesterov:line:z3}
		\EndFor
	\end{algorithmic}
\end{algorithm}

A key property of Algorithm~\ref{alg:nesterov} is that it converges with the accelerated rate $\cO \left(\sqrt{\nicefrac{L}{\mu}}\log \frac{1}{\epsilon}\right)$. However, PNGD in each iteration calculates the matrix-vector multiplication $\mP \g F^*(z_g^k)$, which requires {\em full averaging}, i.e., consensus, over all nodes of the communication network. In particular, this {\em can not be done in decentralized fashion}. In Section~\ref{sec:algorithm} we describe our algorithm {\sf ADOM}, which in a certain sense mimics the behavior of Algorithm~\ref{alg:nesterov}, but {\em can be} implemented in a decentralized fashion.

\section{Decentralized Communication}\label{sec:communication}

We now introduce the necessary notation, definitions and formalism to be able to describe our method.
Compute nodes $\cV=[n]$ are connected through a communication network represented as a graph $\cG^k = (\cV, \cE^k)$, where $k \in \{0,1,2,\ldots\}$ encodes time, and $\cE^k \subseteq \{(i,j) \in \cV\times\cV : i\neq j\}$ is the set of edges at time $k$. In this work we assume that the graph $\cG^k$ is undirected, that is, $(i,j) \in \cE^k$ implies $(j,i) \in \cE^k$. We also assume that $\cG^k$ is connected. For each node $i \in \cV$ we consider a set of its neighbors at time step $k$: $\cN_i^k = \{j \in \cV : (i, j) \in \cE^k\}$. At time step $k$, each node $i \in \cV$ can communicate  with the nodes from set $\cN_i^k$ only. This type of communication is known as {\em decentralized communication} in the literature.

\subsection{Gossip Matrices}
Decentralized  communication between nodes is typically represented via a matrix-vector multiplication with a {\em gossip matrix}. For time-invariant networks such representations can be found in, e.g., \citep{kovalev2020optimal}. For each time step $k \in \{0,1,2,\ldots\}$ consider a matrix $\hat{\mW}(k) \in \R^{n \times n}$ with the following properties:
\begin{enumerate}
	\item $\hat{\mW}(k)$ is symmetric and positive semi-definite,
	\item $\hat{\mW}(k)_{i,j} \neq 0$ if and only if $(i,j) \in \cE^k$ or $i=j$,
	\item $\ker \hat{\mW}(k) = \{(x_1,\ldots,x_n) \in \R^n : x_1 = \ldots = x_n\}$.
\end{enumerate}
Matrix $\hat{\mW}(k)$ is often called a {\em gossip matrix}. A typical example is the Laplacian of the graph $\cG^k$. Consider also a linear map $\mW(k)\colon (\R^d)^\cV \rightarrow (\R^d)^\cV$, i.e., $nd\times nd$ matrix defined by $\mW(k) = \hat{\mW}(k) \otimes \mI_d$. This matrix can be represented as a block matrix $(\mW(k)_{i,j})_{(i,j)\in \cV^2}$, where each block $\mW(k)_{i,j} = \hat{\mW}(k)_{i,j}\mI_d$ is  a $d\times d$ matrix  proportional to $\mI_d$. Matrix $\mW(k)$ satisfies similar properties to $\hat{\mW}(k)$:
\begin{enumerate}
	\item $\mW(k)$ is symmetric and positive semi-definite,
	\item $\mW(k)_{i,j} \neq 0$ if and only if $(i,j) \in \cE^k$ or $i=j$,
	\item $\ker \mW(k) = \cL$ or equivalently $\range \mW(k) = \cL^\perp$.
\end{enumerate}
With a slight abuse of language, in the rest of this paper we will refer to  $\mW(k)$ as a {\em gossip matrix} as well.

\subsection{Decentralized Communication as Multiplication with the Gossip Matrix}
Decentralized communication of  vectors $x_1, \dots, x_n\in \R^d$ stored on the nodes  among neighboring nodes at time step $k$ can be represented as a multiplication of the  $nd$-dimensional vector by matrix $\mW(k)$. Indeed, consider $x = (x_1,\ldots,x_n) \in (\R^d)^\cV, y = (y_1,\ldots, y_n) \in (\R^d)^\cV$, where each $x_i$ is stored by node $i\in \cV$, and let $y = \mW(k) x$. One can observe that
\begin{equation*}
\squeeze 	y_i = \sum \limits_{j=1}^n \hat{\mW}(k)_{i,j} x_j = \sum \limits_{j \in \cN_i} \hat{\mW}(k)_{i,j} x_j.
\end{equation*}
Hence, for each node $i$, vector $y_i$ is a linear combination of vectors $x_j$, stored at the neighboring nodes $j \in \cN_i$. This means that matrix-vector multiplications by matrix $\mW(k)$ can be computed in a decentralized fashion. 

\subsection{Condition Number of Time-Varying Networks}\label{sec:chi}
A condition number of the matrix $\hat{\mW}(k)$ is given as $\frac{\lmax(\hat{\mW}(k))}{\lminp(\hat{\mW}(k))}$, where $\lambda_{\max}$ refers to the largest and $\lambda_{\min}^+$ to the smallest positive eigenvalue. This quantity is known to be a measure of the  connectivity of graph $\cG^k$, and appears in convergence rates of many decentralized algorithms. In this work we assume that this condition number is bounded for all $k \in \{0,1,2\ldots\}$. In particular, we assume that there exist constants $0 < \lminp < \lmax$ such that
\begin{equation}\label{eq:09h0df9hjfd}
	\lminp \leq \lminp(\hat{\mW}(k)) \leq \lmax(\hat{\mW}(k)) \leq \lmax.
\end{equation}
So, we assume that the worst case spectral behavior of the gossip matrices is bounded, and these bounds will later appear in our convergence rate for {\sf ADOM}.

Relation \eqref{eq:09h0df9hjfd} can be equivalently written in the form of a linear matrix inequality involving the gossip matrix $\mW(k)$:
\begin{equation}\label{eq:spectrum}
\lminp \mP \preceq \mW(k) \preceq \lmax \mP,
\end{equation}
where $\mP$ is orthogonal projector onto subspace $\range \mW^k = \cL^\perp$ given by \eqref{eq:P}. Note that
\begin{equation}\label{eq:PW}
\mP\mW(k) = \mW(k)\mP = \mW(k).
\end{equation}
By $\chi$ we  denote a bound on the condition number of matrices $\mW(k), k =0,1,2\ldots$, given by \begin{equation} \label{eq:chi}\chi \eqdef \lmax/\lminp.\end{equation}

\subsection{Decentralized Communication as a Compression Operator}\label{sec:compression}

We have just shown that decentralized communication at time step $k$ can be represented as  multiplication by the gossip matrix $\mW(k)$. We will now show, and this is a key insight which was the starting point of our work, that {\em decentralized communication can also be seen as the application of a contractive compression operator.} 

Let $\cQ$ be a linear space. A mapping $\cC\colon \cQ\rightarrow \cQ$ is called a {\em compression operator} if there exists $\delta \in (0,1]$ such that
\begin{equation}\label{eq:contractor}
	\sqn{\cC(z) - z} \leq (1-\delta)\sqn{z} \text{ for all } z \in \cQ.
\end{equation}
The following lemma shows that matrix-vector multiplication by gossip matrix $\mW(k)$ is a contractive compression operator acting on the subspace $\cL^\perp$.
\begin{lemma}
	Let $\sigma \in (0,1/\lmax)$, $k \in \{0,1,2\ldots \}$. Then the following inequality holds for all $z \in \cL^\perp$:
	\begin{equation*}
		\sqn{\sigma\mW(k) z - z} \leq (1-\sigma\lminp)\sqn{z}.
	\end{equation*}
\end{lemma}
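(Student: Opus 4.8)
The plan is to expand the squared norm on the left-hand side and then control the two resulting terms, an inner product and a norm, using only the spectral inequality \eqref{eq:spectrum} together with $\mP z = z$ (valid since $z \in \cL^\perp$). Writing $\mW \eqdef \mW(k)$ for brevity, the expansion reads
\[
	\sqn{\sigma\mW z - z} = \sigma^2\sqn{\mW z} - 2\sigma\<\mW z,z> + \sqn{z}.
\]
The lower bound in \eqref{eq:spectrum} gives $\<\mW z,z> \geq \lminp\<\mP z,z> = \lminp\sqn{z}$, which is what I will use for the cross term. The subtlety is that I must not apply the upper bound $\<\mW z,z>\leq\lmax\sqn{z}$ here, because after collecting terms the inner product will enter with a negative coefficient.

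The second ingredient I would establish is the auxiliary operator inequality $\mW^2 \preceq \lmax\mW$. This holds because $\mW$ is symmetric positive semidefinite with every eigenvalue lying in $[0,\lmax]$ (it is $0$ on $\ker\mW = \cL$ and in $[\lminp,\lmax]$ on $\cL^\perp$ by \eqref{eq:spectrum}), and for each such eigenvalue $\lambda$ one has $\lambda^2 \leq \lmax\lambda$. Consequently
\[
	\sqn{\mW z} = \<\mW^2 z,z> \leq \lmax\<\mW z,z>,
\]
which folds the $\sqn{\mW z}$ term into a multiple of $\<\mW z,z>$ rather than requiring an independent bound on it.

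Substituting this into the expansion and collecting the two inner-product contributions gives
\[
	\sqn{\sigma\mW z - z} \leq \sqn{z} - \sigma(2 - \sigma\lmax)\<\mW z,z>.
\]
Since $\sigma < 1/\lmax$, the factor $2 - \sigma\lmax$ is strictly positive, so the inner product is multiplied by a negative number; applying the lower bound $\<\mW z,z>\geq\lminp\sqn{z}$ then yields
\[
	\sqn{\sigma\mW z - z} \leq \left(1 - \sigma(2 - \sigma\lmax)\lminp\right)\sqn{z}.
\]
Finally, $\sigma\lmax < 1$ forces $2 - \sigma\lmax > 1$, hence $1 - \sigma(2-\sigma\lmax)\lminp \leq 1 - \sigma\lminp$, which is exactly the claimed contraction factor.

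The main obstacle I anticipate is purely one of sign bookkeeping rather than of any deep estimate: the constraint $\sigma < 1/\lmax$ is precisely what makes the coefficient of $\<\mW z,z>$ negative, and this dictates that one must lower-bound the inner product (and separately justify $\mW^2 \preceq \lmax\mW$ to eliminate $\sqn{\mW z}$) rather than use the more obvious upper bounds. Once the correct direction of each spectral bound is chosen, the remainder is the routine expansion above.
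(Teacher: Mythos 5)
Your proof is correct and uses essentially the same argument as the paper: expand the square, apply the lower spectral bound $\mW(k) \succeq \lminp \mP$ to the cross term and the bound $\mW^2(k) \preceq \lmax \mW(k)$ to the quadratic term, and let $\sigma\lmax \leq 1$ absorb what remains. The paper carries out this exact computation inside the proof of Lemma~\ref{dual:lem:error} (applied there to the operator $\mP - \sigma\mW(k)$ acting on $m^k - \eta\g F^*(z_g^k)$), merely splitting the cross term as $-2\sigma = -\sigma - \sigma$ instead of collecting the single coefficient $-\sigma(2-\sigma\lmax)$ as you do --- a purely cosmetic difference in bookkeeping.
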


There is a natural question regarding Algorithm~\ref{alg:nesterov}: can we replace the gradient $\mP \g F^*(z_g^k)$ on lines~\ref{nesterov:line:z2} and~\ref{nesterov:line:z3} with its compressed version $\mW(k)\mP \g F^*(z_g^k)$,  or some modification thereof, and still obtain a good convergence result? Note that \eqref{eq:PW} implies $\mW(k)\mP \g F^*(z_g^k) = \mW(k)\g F^*(z_g^k)$. In Section~\ref{sec:algorithm} we will provide a positive answer to this question.

Convergence of gradient-type methods with contractive compression operators satisfying \eqref{eq:contractor} was studied in several recent papers. In particular, \citet{stich2019error, karimireddy2019error, biased2020, gorbunov2020linearly} study a mechanism called {\em error feedback}, which allows to design variations with better convergence properties. However, these works do not study accelerated algorithms, nor provide any connections between compression and decentralized communication, which we do. 

The only exception we are aware of is \citep{EC-Katyusha}, which is the first work proposing an accelerated error compensated method. However, their ECLK method is more complicated than ours, uses the Katyusha momentum \citep{allen2017katyusha, L-SVRG} instead of the Nesterov momentum we employ, and does not apply to decentralized optimization.  Moreover, while for us  it is crucial that the contractive property is enforced on a subspace only, \citet{EC-Katyusha} require this property to hold globally.

\section{ADOM: Algorithm and its Analysis}\label{sec:algorithm}

Armed with the notions and ideas described in preceding sections, we are now ready to  present our  method {\sf ADOM} (Algorithm~\ref{alg:dual}). As alluded to in the introduction, {\sf ADOM} is a generalization of Algorithm~\ref{alg:nesterov}  that can be implemented in a decentralized fashion. Indeed, our algorithm does {\em not} make use of matrix-vector multiplication by $\mP$ in the way Algorithm~\ref{alg:nesterov} does, which requires full averaging over the network. Instead, {\sf ADOM} uses matrix-vector multiplication by the  gossip matrix $\mW(k)$, which represents a single decentralized communication round, as we discussed in Section~\ref{sec:communication}.

\begin{algorithm}[t]
	\caption{ADOM: Accelerated Decentralized Optimization Method}
	\label{alg:dual}
	\begin{algorithmic}[1]
		\State {\bf input:} $z^0 \in \cL^\perp, m^0 \in (\R^d)^\cV, \alpha, \eta, \theta,\sigma\!>\!0, \tau\!\in\!(0,1)$
		\State set $z_f^0 = z^0$
		\For{$k = 0,1,2,\ldots$}
		\State $z_g^k = \tau z^k + (1-\tau)z_f^k$\label{dual:line:z1}
		\State $\Delta^k = \sigma\mW(k)(m^k - \eta\g F^*(z_g^k))$\label{dual:line:delta}
		\State $m^{k+1} = m^k - \eta\g F^*(z_g^k) - \Delta^k$\label{dual:line:m}
		\State $z^{k+1} = z^k + \eta\alpha(z_g^k - z^k) + \Delta^k$\label{dual:line:z2}
		\State $z_f^{k+1} = z_g^k - \theta\mW(k)\g F^*(z_g^k)$\label{dual:line:z3}
		\EndFor
	\end{algorithmic}
\end{algorithm}

\subsection{Design and Analysis of the New Algorithm}
First, we mention two lemmas, which play an important role in the convergence  analysis of Algorithm~\ref{alg:dual}.
\begin{lemma}\label{dual:lem:descent}
	For $\theta \leq \frac{\mu}{\lmax}$ we have the inequality
	\begin{equation}\label{dual:eq:1}
	\squeeze 
	F^*(z_f^{k+1}) \leq F^*(z_g^k) - \frac{\theta\lminp}{2}\sqn{\g F^*(z_g^k)}_{\mP}.
	\end{equation}
\end{lemma}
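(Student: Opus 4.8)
The plan is to combine the $\tfrac1\mu$-smoothness of $F^*$ with the gossip-matrix update on line~\ref{dual:line:z3} and the spectral bounds \eqref{eq:spectrum}. Since $F$ is $L$-smooth and $\mu$-strongly convex, $F^*$ is $\tfrac1\mu$-smooth, so the standard descent inequality
\begin{equation*}
F^*(y) \leq F^*(x) + \<\g F^*(x), y-x> + \frac{1}{2\mu}\sqn{y-x}
\end{equation*}
holds. Writing $g \eqdef \g F^*(z_g^k)$ and substituting $x = z_g^k$, $y = z_f^{k+1} = z_g^k - \theta\mW(k) g$, so that $y-x = -\theta\mW(k) g$, I would obtain
\begin{equation*}
F^*(z_f^{k+1}) \leq F^*(z_g^k) - \theta\<g, \mW(k) g> + \frac{\theta^2}{2\mu}\sqn{\mW(k) g}.
\end{equation*}

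Next I would control the quadratic term $\sqn{\mW(k) g} = \<g, \mW(k)^2 g>$. The key observation is that $\mW(k)^2 \preceq \lmax\mW(k)$: since $\mW(k)$ is symmetric positive semidefinite with eigenvalues lying in $\{0\}\cup[\lminp,\lmax]$, every eigenvalue $\lambda$ satisfies $\lambda^2 \leq \lmax\lambda$, which is exactly this operator inequality. Hence $\sqn{\mW(k) g} \leq \lmax\<g, \mW(k) g>$, and plugging in gives
\begin{equation*}
F^*(z_f^{k+1}) \leq F^*(z_g^k) - \theta\left(1 - \frac{\theta\lmax}{2\mu}\right)\<g, \mW(k) g>.
\end{equation*}
The assumption $\theta \leq \mu/\lmax$ forces $1 - \tfrac{\theta\lmax}{2\mu} \geq \tfrac12$, and since $\mW(k)\succeq 0$ we have $\<g, \mW(k) g>\geq 0$, so the bracketed factor may be replaced by $\tfrac12$. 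Finally, the lower spectral bound $\lminp\mP \preceq \mW(k)$ yields $\<g, \mW(k) g> \geq \lminp\<g,\mP g> = \lminp\sqn{g}_{\mP}$, and chaining these two estimates produces the claimed bound \eqref{dual:eq:1}.

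The only non-routine step is establishing $\mW(k)^2 \preceq \lmax\mW(k)$: one cannot simply right-multiply the operator inequality $\mW(k) \preceq \lmax\mP$ by $\mW(k)$, so this must be argued spectrally (via the eigenvalue inequality $\lambda^2\le\lmax\lambda$ on $[0,\lmax]$) or equivalently by noting that $\lmax\mW(k) - \mW(k)^2 = \mW(k)(\lmax\mI - \mW(k))$ is a product of two \emph{commuting} positive semidefinite operators, where $\lmax\mI - \mW(k)\succeq 0$ follows from $\mW(k)\preceq\lmax\mP\preceq\lmax\mI$. The identity $\mP\mW(k)=\mW(k)\mP=\mW(k)$ from \eqref{eq:PW} is what keeps the passage between the $\mW(k)$- and $\mP$-weighted quantities clean, and everything else is a matter of tracking constants and using the step-size restriction once.
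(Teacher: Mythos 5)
Your proof is correct and follows essentially the same route as the paper's: the $\tfrac{1}{\mu}$-smoothness descent inequality, the bound $\mW(k)^2 \preceq \lmax \mW(k)$ for the quadratic term, the lower bound $\mW(k) \succeq \lminp \mP$, and the step-size condition $\theta \le \mu/\lmax$. The only difference is cosmetic ordering—the paper applies $\lminp\mP \preceq \mW(k)$ to half of the linear term first and then cancels the remainder, while you collect everything into a single $\mW(k)$-weighted term before invoking the spectral lower bound—and your explicit spectral justification of $\mW(k)^2 \preceq \lmax\mW(k)$ fills in a step the paper leaves implicit.
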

\begin{lemma}\label{dual:lem:error}
	For $\sigma \leq \frac{1}{\lmax}$ we have the inequality 
	\begin{equation}\label{dual:eq:2}
	\begin{split}
	&\squeeze \sqn{m^k}_{\mP}
	\leq
	\left(1 - \frac{\sigma\lminp}{4}\right)\frac{4}{\sigma\lminp}\sqn{m^k}_{\mP}
	\\&\squeeze -
	\frac{4}{\sigma\lminp}\sqn{m^{k+1}}_\mP
	+
	\frac{8\eta^2}{(\sigma\lminp)^2}\sqn{\g F^*(z_g^k)}_\mP.
	\end{split}
	\end{equation}
\end{lemma}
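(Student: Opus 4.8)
The plan is to first notice that the stated inequality is purely a rearrangement: multiplying both sides by $\frac{\sigma\lminp}{4}$ and collecting the $\sqn{m^k}_\mP$ terms turns it into the one-step contraction
\[
\sqn{m^{k+1}}_\mP \leq \left(1 - \frac{\sigma\lminp}{2}\right)\sqn{m^k}_\mP + \frac{2\eta^2}{\sigma\lminp}\sqn{\g F^*(z_g^k)}_\mP,
\]
so I would establish this cleaner form and then rearrange back. Writing $v^k \eqdef m^k - \eta\g F^*(z_g^k)$, lines~\ref{dual:line:delta}--\ref{dual:line:m} give $\Delta^k = \sigma\mW(k)v^k$, hence the compact update $m^{k+1} = (\mI - \sigma\mW(k))v^k$.

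Next I would pass to the $\mP$-seminorm, using that $\sqn{x}_\mP = \norm{\mP x}^2$ because $\mP$ is a symmetric projector. The decisive algebraic step exploits $\mP\mW(k) = \mW(k)\mP = \mW(k)$ from \eqref{eq:PW}: applying $\mP$ to the update and inserting $\mW(k) = \mW(k)\mP$ gives $\mP m^{k+1} = (\mI - \sigma\mW(k))\,\mP v^k$, so that $w \eqdef \mP v^k$ lies in $\cL^\perp = \range \mP$. This is precisely the configuration where the compression lemma applies: since $\sigma \leq 1/\lmax$,
\[
\sqn{m^{k+1}}_\mP = \norm{\sigma\mW(k)w - w}^2 \leq (1-\sigma\lminp)\norm{w}^2 = (1-\sigma\lminp)\sqn{v^k}_\mP.
\]

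It then remains to expand $\sqn{v^k}_\mP = \norm{\mP m^k - \eta\mP\g F^*(z_g^k)}^2$ via the weighted inequality $\norm{a-b}^2 \leq (1+s)\norm{a}^2 + (1+s^{-1})\norm{b}^2$. I would take $s = \frac{\sigma\lminp}{2(1-\sigma\lminp)}$, tuned so that $(1-\sigma\lminp)(1+s) = 1 - \frac{\sigma\lminp}{2}$, giving exactly the target $\sqn{m^k}_\mP$ coefficient; a one-line check (which reduces to $\sigma\lminp(\sigma\lminp-3)\leq 0$, true because $\sigma\lminp \leq \sigma\lmax \leq 1$) shows the induced gradient coefficient $(1-\sigma\lminp)(1+s^{-1})\eta^2$ is bounded by $\frac{2\eta^2}{\sigma\lminp}$. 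Substituting and undoing the rearrangement of the first paragraph recovers the claim.

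The hard part is the middle step: one must see that, although the raw iterate $m^{k+1}$ itself need not lie in $\cL^\perp$, its projection $\mP m^{k+1}$ can be written purely as $\mW(k)$ acting on $\mP v^k \in \cL^\perp$, which is what makes the compression lemma — valid only as a contraction on the subspace $\cL^\perp$ — applicable at all. Everything else is Young's inequality and bookkeeping; the only subtlety is the borderline case $\sigma\lminp = 1$, which forces $\lminp = \lmax$ and can be disposed of by a limiting argument or by invoking the strict bound $\sigma < 1/\lmax$.
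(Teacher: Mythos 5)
Your proof is correct and follows essentially the same route as the paper's: the same one-step contraction $\sqn{m^{k+1}}_\mP \leq \left(1-\frac{\sigma\lminp}{2}\right)\sqn{m^k}_\mP + \frac{2\eta^2}{\sigma\lminp}\sqn{\g F^*(z_g^k)}_\mP$, obtained with the identical Young parameter, followed by the identical rearrangement. The only difference is packaging: the paper expands $\sqn{(\mP-\sigma\mW(k))(m^k-\eta\g F^*(z_g^k))}$ directly and applies the spectral bound \eqref{eq:spectrum} inline (which handles $\sigma=\frac{1}{\lmax}$ at once, via $\sigma(\sigma\lmax-1)\leq 0$), whereas you route the contraction through Lemma~1 applied to $\mP(m^k-\eta\g F^*(z_g^k))\in\cL^\perp$, which is why you need the extra remark about the boundary case that the paper's inline computation avoids.
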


We now make a few remarks about the main steps of Algorithm~\ref{alg:dual} compared to Algorithm~\ref{alg:nesterov} and comment on some aspects of our convergence analysis and the role of the above lemmas in it:
\begin{itemize}
	\item Line~\ref{dual:line:z1} of Algorithm~\ref{alg:dual} is unchanged compared to line~\ref{nesterov:line:z1} of Algorithm~\ref{alg:nesterov}.
	
	\item Line~\ref{dual:line:z3} of Algorithm~\ref{alg:dual} corresponds to line~\ref{nesterov:line:z3} of Algorithm~\ref{alg:nesterov}. Note that the analysis of Algorithm~\ref{alg:nesterov} requires an inequality of the type
	\begin{equation*}
		F^*(z_f^{k+1}) \leq F^*(z_g^k) - \mathrm{const}\cdot \sqn{\g F^*(z_g^k)}_\mP.
	\end{equation*}
	Lemma~\ref{dual:lem:descent} establishes a similar inequality for Algorithm~\ref{alg:dual}.
	
	\item Together, lines~\ref{dual:line:delta}, \ref{dual:line:m} and~\ref{dual:line:z2} of Algorithm~\ref{alg:dual} form an error feedback update, which we discussed in Section~\ref{sec:compression} when we interpreted a decentralized communication round as the application of a contractive compression operator. A key to the theoretical analysis of this update is to make use of the so-called {\em ghost iterate} $\hat{z}^k = z^k + \mP m^k$. One can observe that the ghost iterate is updated as
	\begin{equation*}
		\hat{z}^{k+1} = \hat{z}^k + \eta\alpha (z_g^k - z^k)- \eta \mP\g F^*(z_g^k),
	\end{equation*}
	which is similar to the update on line~\ref{nesterov:line:z2} of Algorithm~\ref{alg:nesterov}.
	
	\item Another key step in the analysis of Algorithm~\ref{alg:dual} is to bound the distance between the actual iterate $z^k$ and the ghost iterate $\hat{z}^k$, which is equal to $\sqn{m^k}_\mP$. This is done in Lemma~\ref{dual:lem:error} based on  line~\ref{dual:line:m} of Algorithm~\ref{alg:dual}.
\end{itemize}

\subsection{Main Convergence Theorem}

Now, we are ready to present our main theorem.
\begin{theorem}\label{thm:main}
	Set parameters $\alpha, \eta, \theta, \sigma,\tau$ of Algorithm~\ref{alg:dual} to $\alpha = \frac{1}{2L}$, $\eta = \frac{2\lminp\sqrt{\mu L}}{7\lmax}$, $\theta = \frac{\mu}{\lmax}$, $\sigma = \frac{1}{\lmax}$, and\\ $\tau = \frac{\lminp}{7\lmax}\sqrt{\frac{\mu}{L}}$. Then there exists $C>0$, such that
	\begin{equation}
	\squeeze 
		\sqn{\g F^*(z_g^k) - x^*} \leq C \left(1- \frac{\lminp}{7\lmax} \sqrt{\frac{\mu}{L}}\right)^k.
	\end{equation}
\end{theorem}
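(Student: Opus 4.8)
The plan is to run a Lyapunov (potential function) argument of the Nesterov type, adapted to the error-feedback structure of {\sf ADOM}. The target is a one-step contraction $\Psi^{k+1}\le(1-\tau)\Psi^k$ for the potential
\[
\Psi^k \eqdef \left(F^*(z_f^k)-F^*(z^*)\right) + \tfrac{a}{2}\sqn{\hat z^k - z^*} + b\,\sqn{m^k}_{\mP},
\]
where $\hat z^k \eqdef z^k + \mP m^k$ is the ghost iterate and $a,b>0$ are constants fixed at the end. Unrolling this contraction and converting back to the quantity in the statement finishes the proof.

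First I would record the primal-dual bookkeeping. The iterates $z^k, z_f^k, z_g^k$ all lie in $\cL^\perp$ (an easy induction using $\range\mW(k)=\cL^\perp$ and \eqref{eq:PW}), and the dual optimality condition gives $\g F^*(z^*)\in\cL$ with $\g F^*(z^*)=x^*$. Hence the final reduction is routine: by $\tfrac1\mu$-smoothness of $F^*$ one has $\sqn{\g F^*(z_g^k)-x^*}\le \tfrac{1}{\mu^2}\sqn{z_g^k-z^*}$; by convexity of $\sqn{\cdot}$ applied to $z_g^k-z^*=\tau(z^k-z^*)+(1-\tau)(z_f^k-z^*)$, together with $\sqn{z^k-z^*}\le 2\sqn{\hat z^k-z^*}+2\sqn{m^k}_{\mP}$ and the $\tfrac1L$-strong-convexity bound $\sqn{z_f^k-z^*}\le 2L(F^*(z_f^k)-F^*(z^*))$, every piece of $\sqn{z_g^k-z^*}$ is dominated by a term of $\Psi^k$. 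Thus $\sqn{\g F^*(z_g^k)-x^*}\le C'\Psi^k$ for some $C'>0$, and it suffices to contract $\Psi^k$.

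The heart of the argument is the ghost iterate. Combining lines~\ref{dual:line:delta}--\ref{dual:line:z2} with $\mP\Delta^k=\Delta^k$ (from \eqref{eq:PW}) yields the clean recursion $\hat z^{k+1}=\hat z^k+\eta\alpha(z_g^k-z^k)-\eta\mP\g F^*(z_g^k)$, i.e. exactly the PNGD step of line~\ref{nesterov:line:z2}. I would then expand $\sqn{\hat z^{k+1}-z^*}$, split $\hat z^k-z^*=(z_g^k-z^*)+(\hat z^k-z_g^k)$ inside the gradient cross term, and apply $\tfrac1L$-strong convexity $\langle \g F^*(z_g^k),z_g^k-z^*\rangle\ge F^*(z_g^k)-F^*(z^*)+\tfrac{1}{2L}\sqn{z_g^k-z^*}$ (legitimate since $z_g^k,z^*\in\cL^\perp$, so $\mP$ is invisible there). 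The momentum cross term is rewritten via $z_g^k-z^k=(1-\tau)(z_f^k-z^k)$ and convexity of $F^*$ to introduce $F^*(z_f^k)-F^*(z_g^k)$, which lets the $F^*$-gap telescope. The descent Lemma~\ref{dual:lem:descent} (used with the extremal $\theta=\mu/\lmax$) upgrades $F^*(z_g^k)$ to $F^*(z_f^{k+1})$ while donating $-\tfrac{\theta\lminp}{2}\sqn{\g F^*(z_g^k)}_{\mP}$, which must absorb the quadratic gradient contribution from $\sqn{\eta\alpha(z_g^k-z^k)-\eta\mP\g F^*(z_g^k)}$.

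The main obstacle, and the reason for the error term in $\Psi^k$, is that $\hat z^k$ follows the ideal recursion but the gradient is evaluated at $z_g^k$ built from the \emph{true} iterate $z^k=\hat z^k-\mP m^k$; each substitution $z^k\mapsto\hat z^k-\mP m^k$ spawns cross terms proportional to $\mP m^k$. I would split these with Young's inequality and absorb them using the two negative reservoirs available: the gradient reservoir $-\sqn{\g F^*(z_g^k)}_{\mP}$ from Lemma~\ref{dual:lem:descent}, and the error reservoir $-\sqn{m^k}_{\mP}$ exposed by rearranging Lemma~\ref{dual:lem:error}, which simultaneously supplies the contraction factor $1-\tfrac{\sigma\lminp}{4}$ for $\sqn{m^{k+1}}_{\mP}$ and re-injects a $\sqn{\g F^*(z_g^k)}_{\mP}$ term to be paid for by the gradient reservoir. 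The delicate part is purely quantitative: choosing $a,b$ and verifying that the prescribed $\alpha=\tfrac{1}{2L}$, $\eta=\tfrac{2\lminp\sqrt{\mu L}}{7\lmax}$, $\theta=\tfrac{\mu}{\lmax}$, $\sigma=\tfrac{1}{\lmax}$, $\tau=\tfrac{\lminp}{7\lmax}\sqrt{\mu/L}$ force all three groups of terms to contract by the common factor $1-\tau$, with every leftover nonnegative quantity dominated by a reservoir. Once $\Psi^{k+1}\le(1-\tau)\Psi^k$ holds, iterating gives $\Psi^k\le(1-\tau)^k\Psi^0$, and the reduction of the second paragraph yields the claim with $C=\tfrac{C'}{\mu^2}\Psi^0$ and rate $1-\tau=1-\tfrac{\lminp}{7\lmax}\sqrt{\mu/L}$.
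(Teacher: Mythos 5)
Your proposal takes essentially the same route as the paper's proof: the same ghost-iterate recursion $\hat z^{k+1} = \hat z^k + \eta\alpha(z_g^k - z^k) - \eta\mP\g F^*(z_g^k)$, the same three-term Lyapunov function (the paper fixes your constants $a,b$ by taking coefficients $1$, $\frac{2\eta(1-\eta\alpha)}{\tau}$ and $6$ on the distance, function-gap and error terms), the same absorption of the $\mP m^k$ cross terms via Young's inequality against the negative reservoirs supplied by Lemma~\ref{dual:lem:descent} and Lemma~\ref{dual:lem:error}, and the identical final reduction of $\sqn{\g F^*(z_g^k)-x^*}$ to $\Psi^k$ through $\frac{1}{\mu}$-smoothness, the interpolation step of line~\ref{dual:line:z1}, $\frac{1}{L}$-strong convexity, and the bound $\sqn{z^k-z^*}\le 2\sqn{\hat z^k-z^*}+2\sqn{m^k}_{\mP}$. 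The quantitative bookkeeping you defer --- verifying that the prescribed parameters force the common contraction factor $1-\tau$ --- is exactly what the paper's auxiliary lemma (Lemma~\ref{dual:lem:main}) carries out, so your plan is a faithful blueprint of the published argument.
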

Note that the rate is $\cO(\chi \kappa^{1/2} \log \frac{1}{\varepsilon})$, as previously advertised.
Proofs of all our results are available in the appendix.

\subsection{Comparison with Existing Algorithms}
In this paper we compare our Algorithm~\ref{alg:dual} with current state-of-the-art algorithms for decentralized optimization over time-varying networks. While the Accelerated Penalty Method (APM) \citep{li2018sharp} and Mudag \citep{ye2020multi} were originally designed for time-invariant networks, they can be easily extended to the time-varying case. Also note that DIGing \citep{nedic2017achieving}, Push-Pull Gradient Method \citep{pu2020push} and PANDA \citep{maros2018panda} converge under slightly more general assumptions than those we used to analyze our method. However, these methods converge at a substantially slower rate due to the fact that they do not employ any acceleration mechanism. Moreover, to the best of our knowledge, no results  improving the  convergence rates of these algorithms under our assumptions exist in the literature.

\section{Numerical Experiments}\label{experiments}

\begin{figure*}[t]
	\centering
	\includegraphics[width=0.24\linewidth]{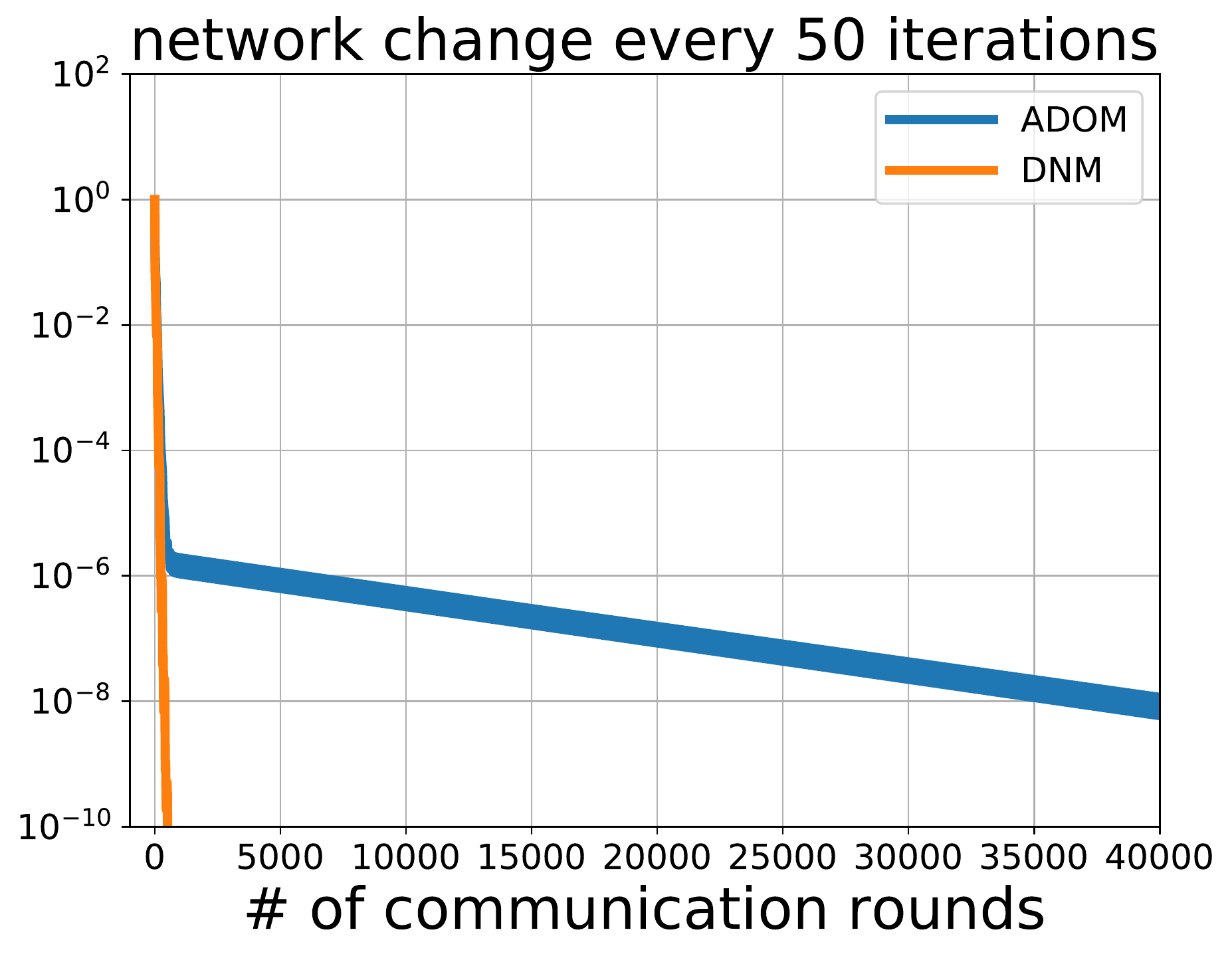}
	\includegraphics[width=0.24\linewidth]{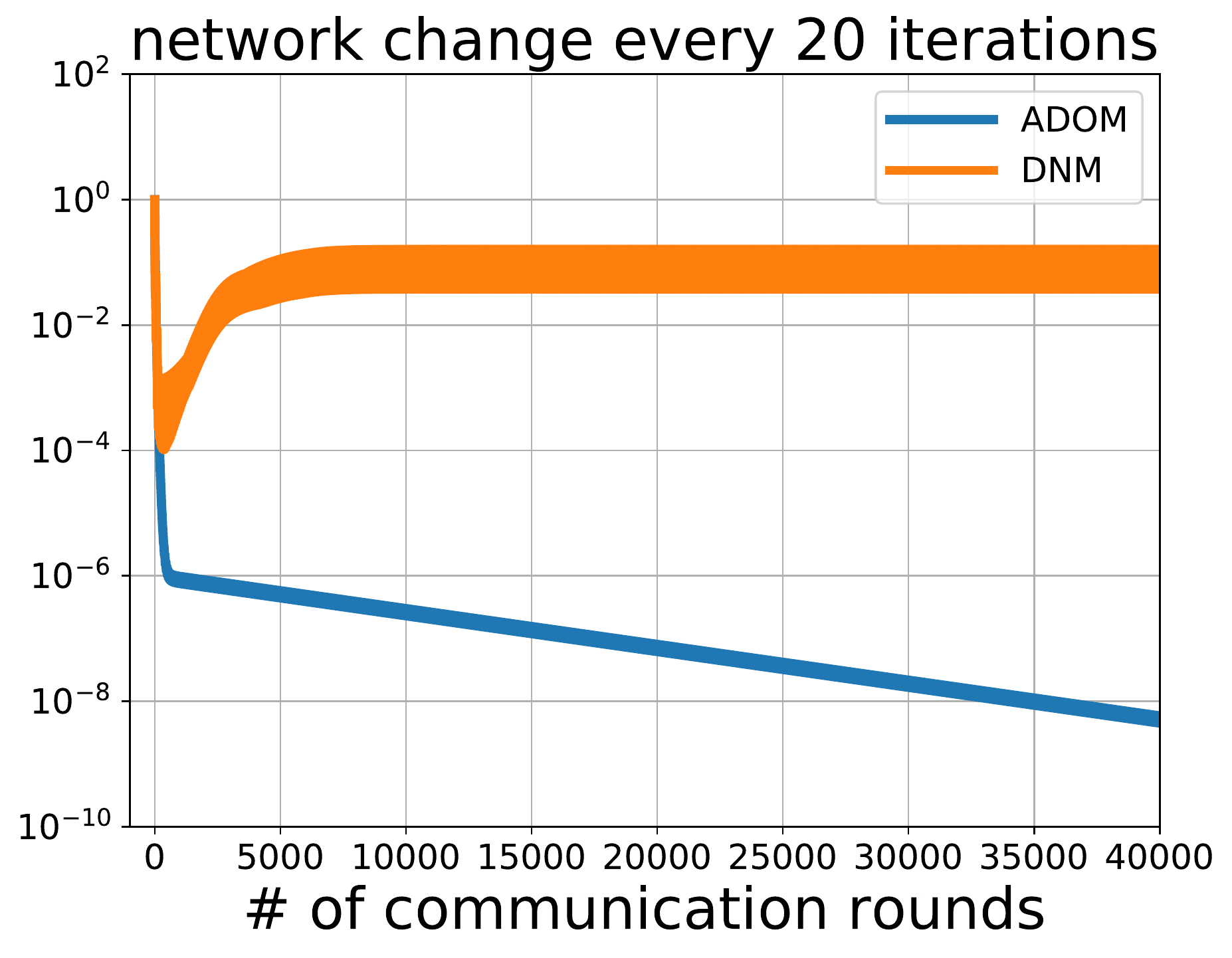}
	\includegraphics[width=0.24\linewidth]{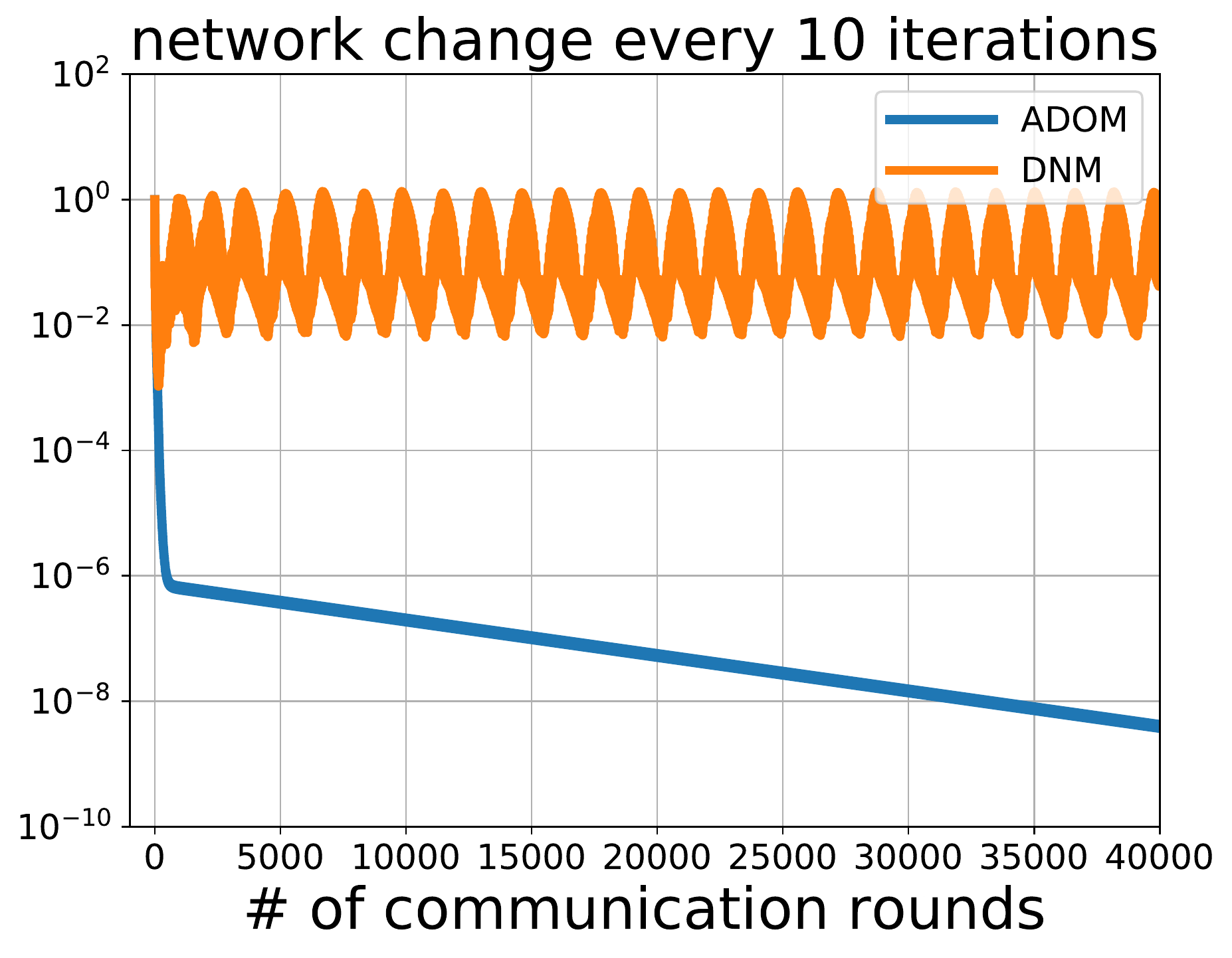}
	\includegraphics[width=0.24\linewidth]{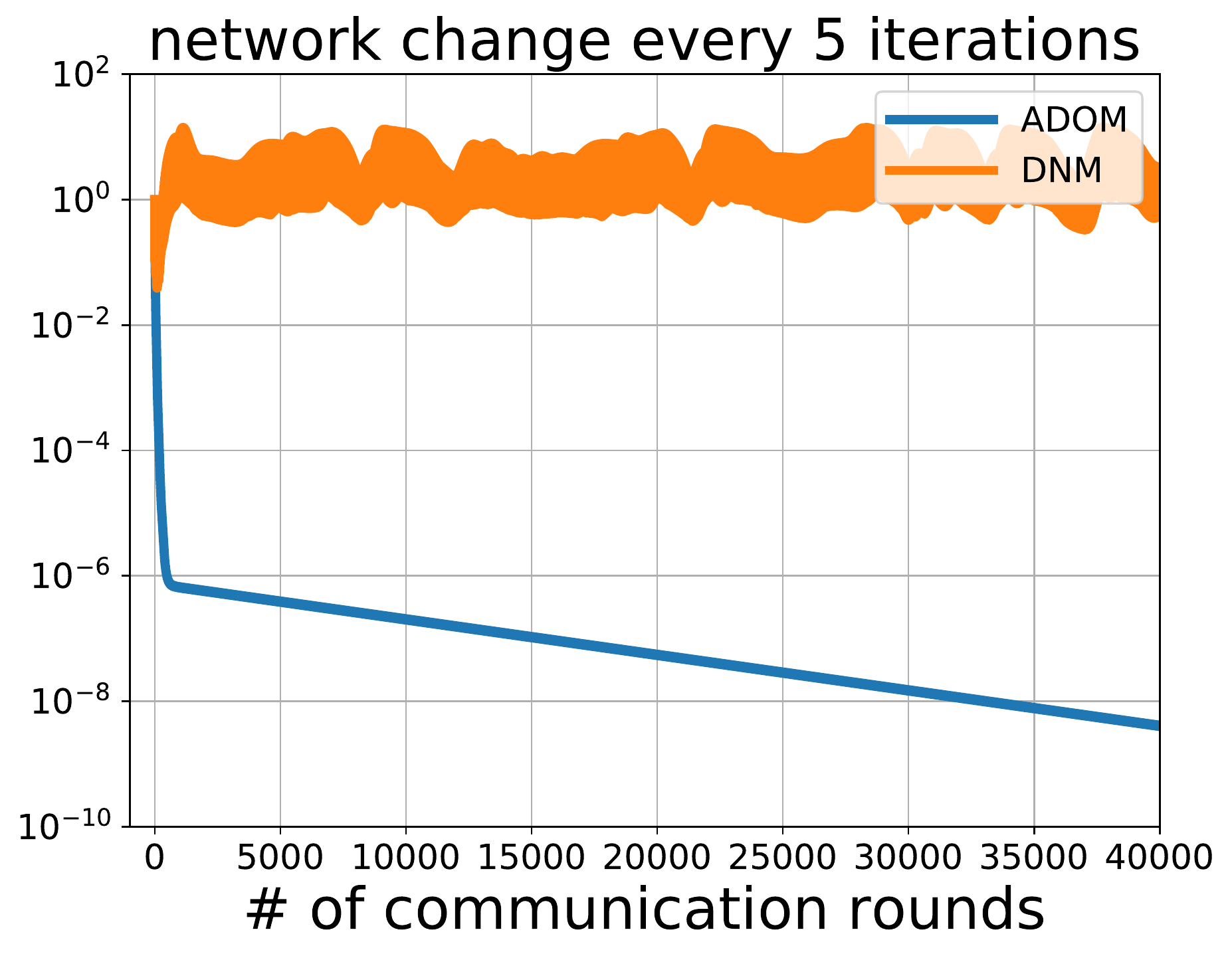}	
\\
	\includegraphics[width=0.24\linewidth]{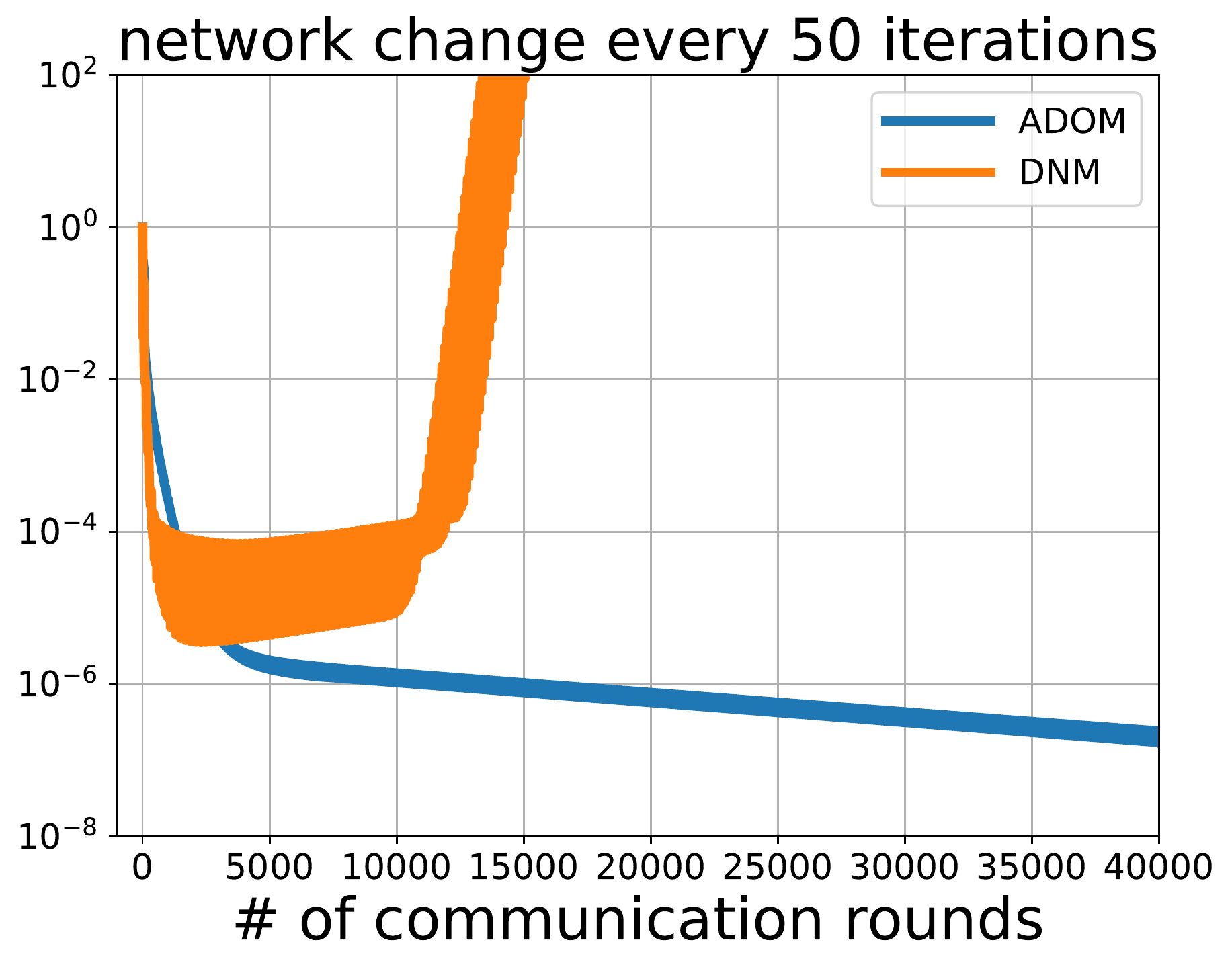}
	\includegraphics[width=0.24\linewidth]{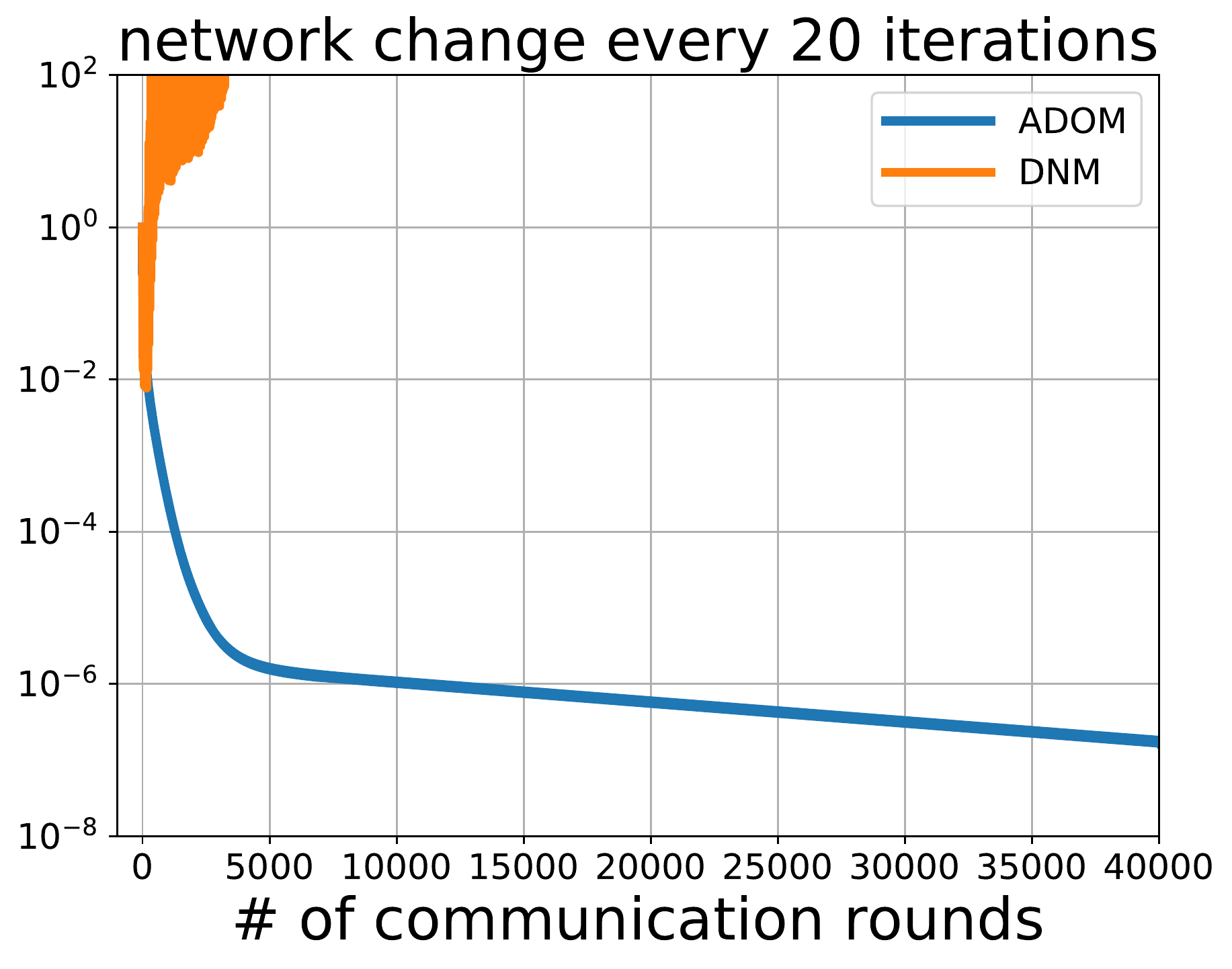}
	\includegraphics[width=0.24\linewidth]{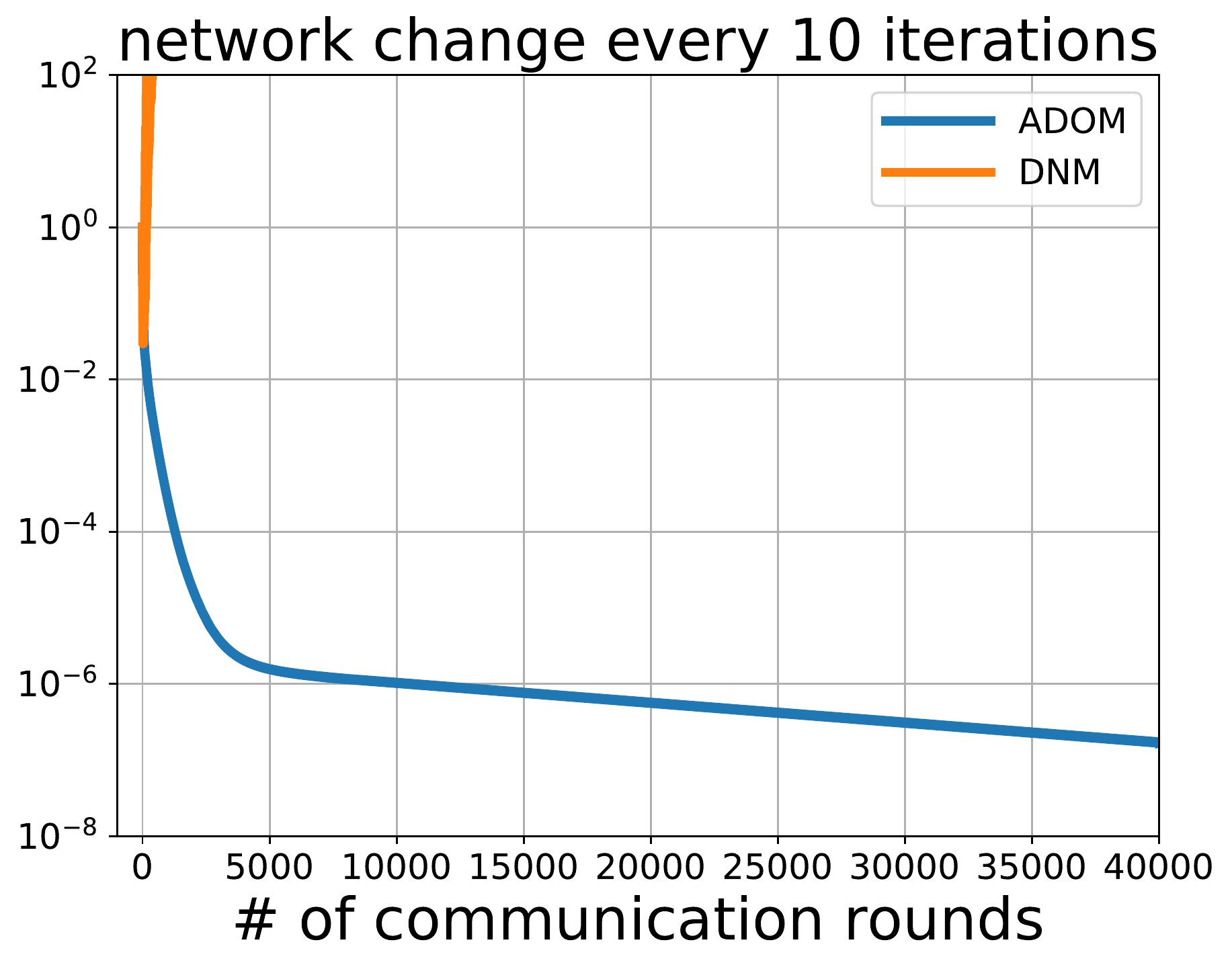}
	\includegraphics[width=0.24\linewidth]{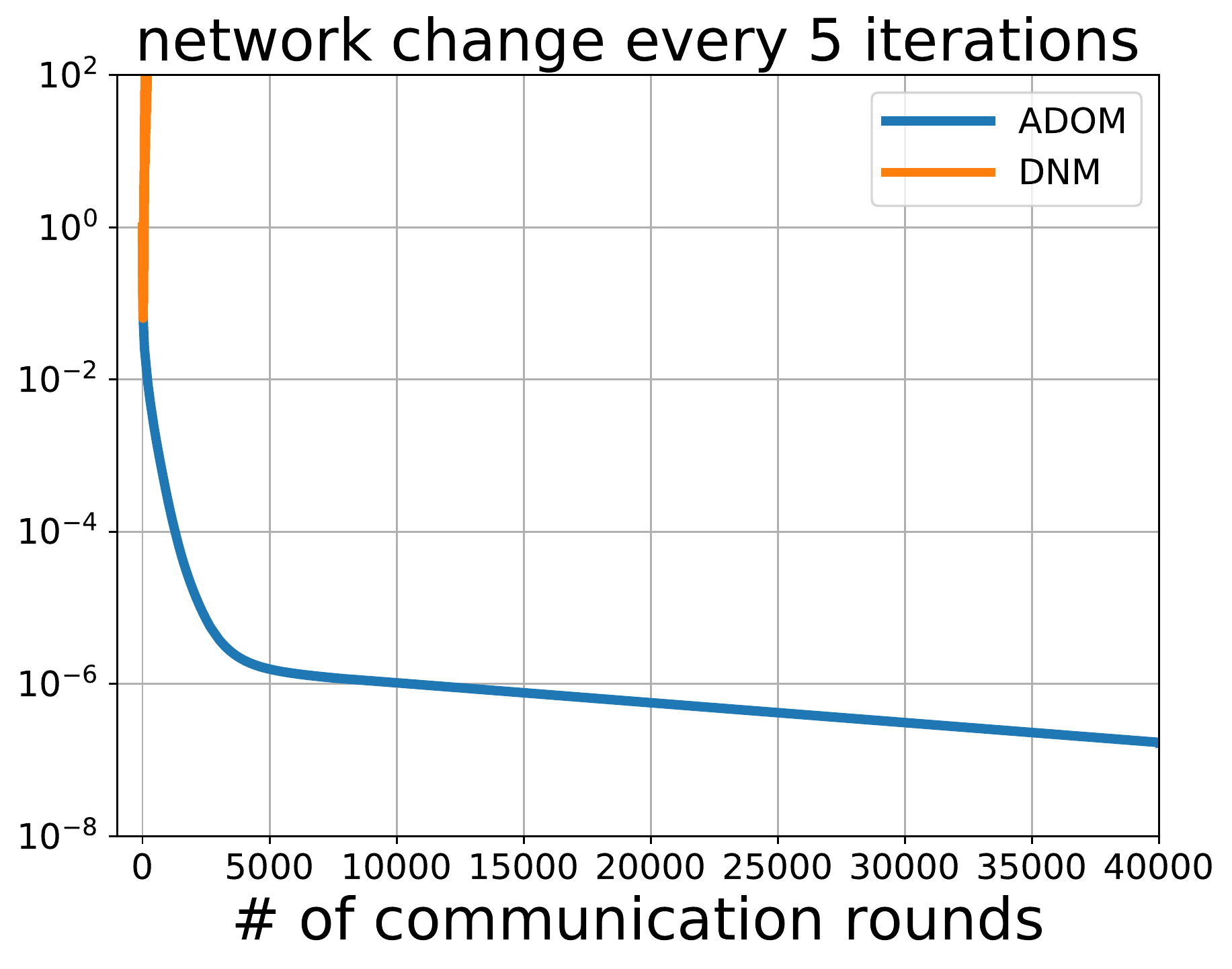}
	
	\caption{Comparison of DNM and {\sf ADOM} on a  problem with $\kappa = 30$ and $d =  40$. {\bf Top row:}  We alternate between two geometric  graphs ($\chi \approx 400$). {\bf Bottom row:} We alternate between two networks, one with a ring and the other with a star topology ($\chi \approx 1000$).}
	\label{fig:slow2}
\end{figure*}

\begin{figure*}[!htb]
	\centering
	\includegraphics[width=0.24\linewidth]{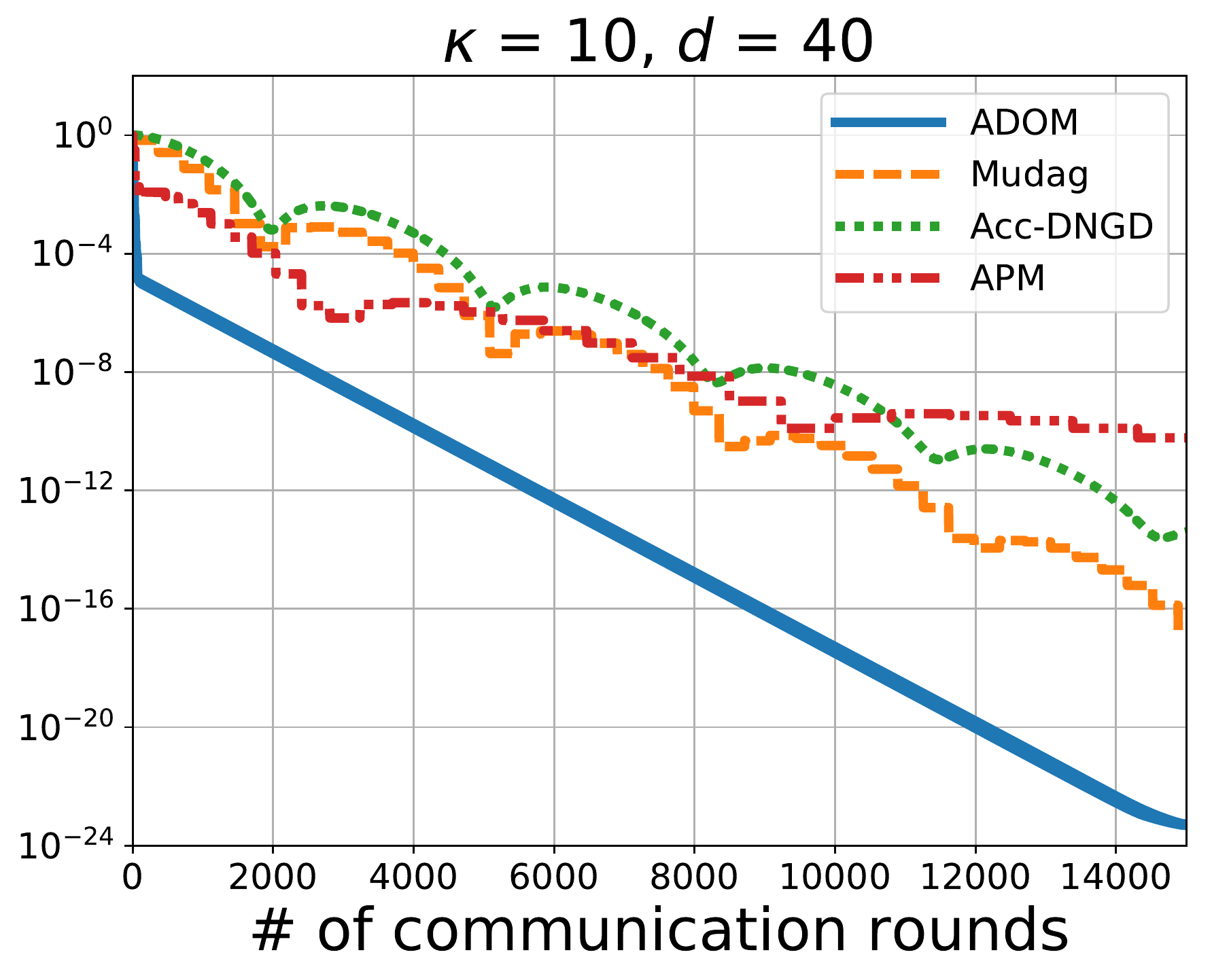}
	\includegraphics[width=0.24\linewidth]{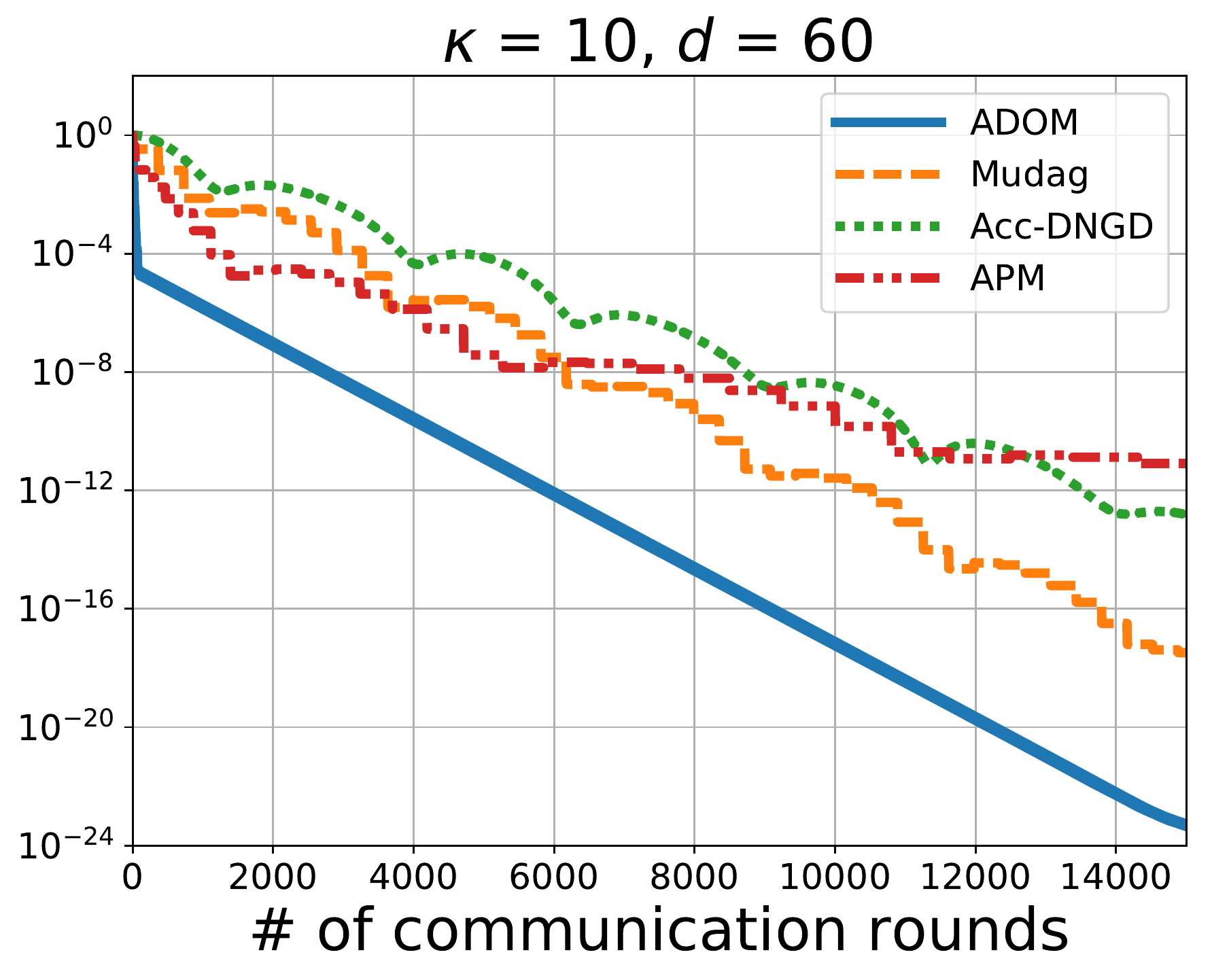}
	\includegraphics[width=0.24\linewidth]{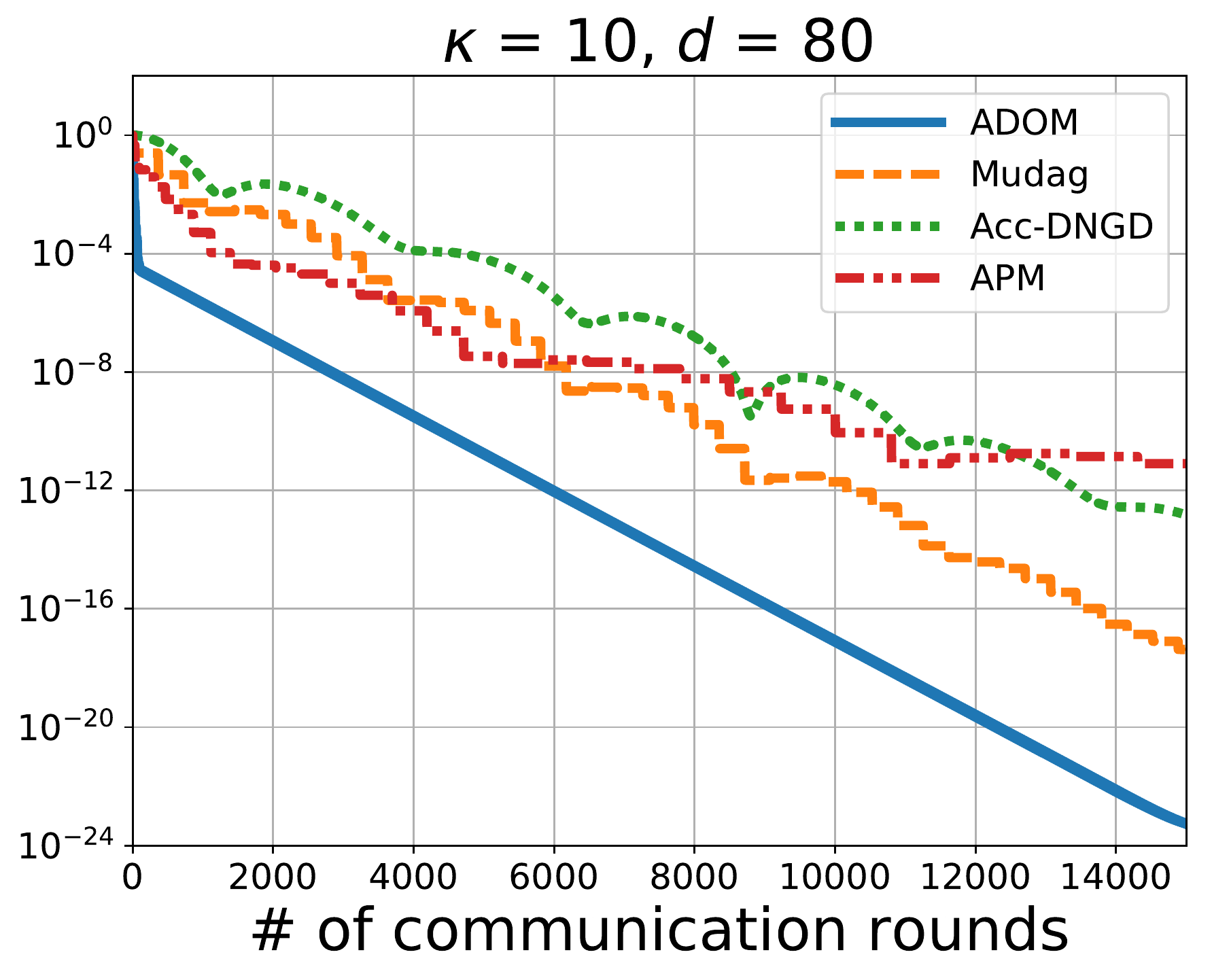}
	\includegraphics[width=0.24\linewidth]{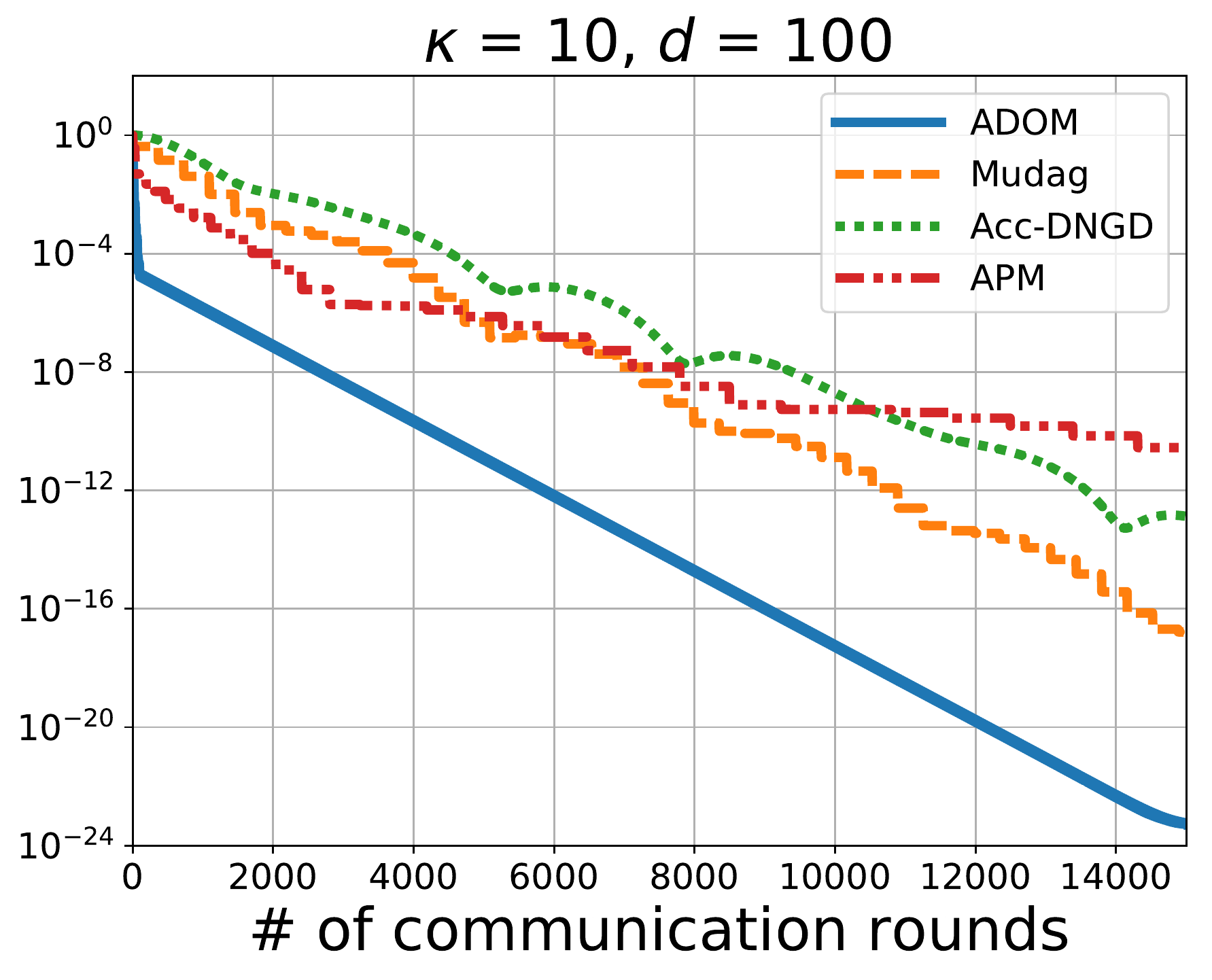}
	
	\includegraphics[width=0.24\linewidth]{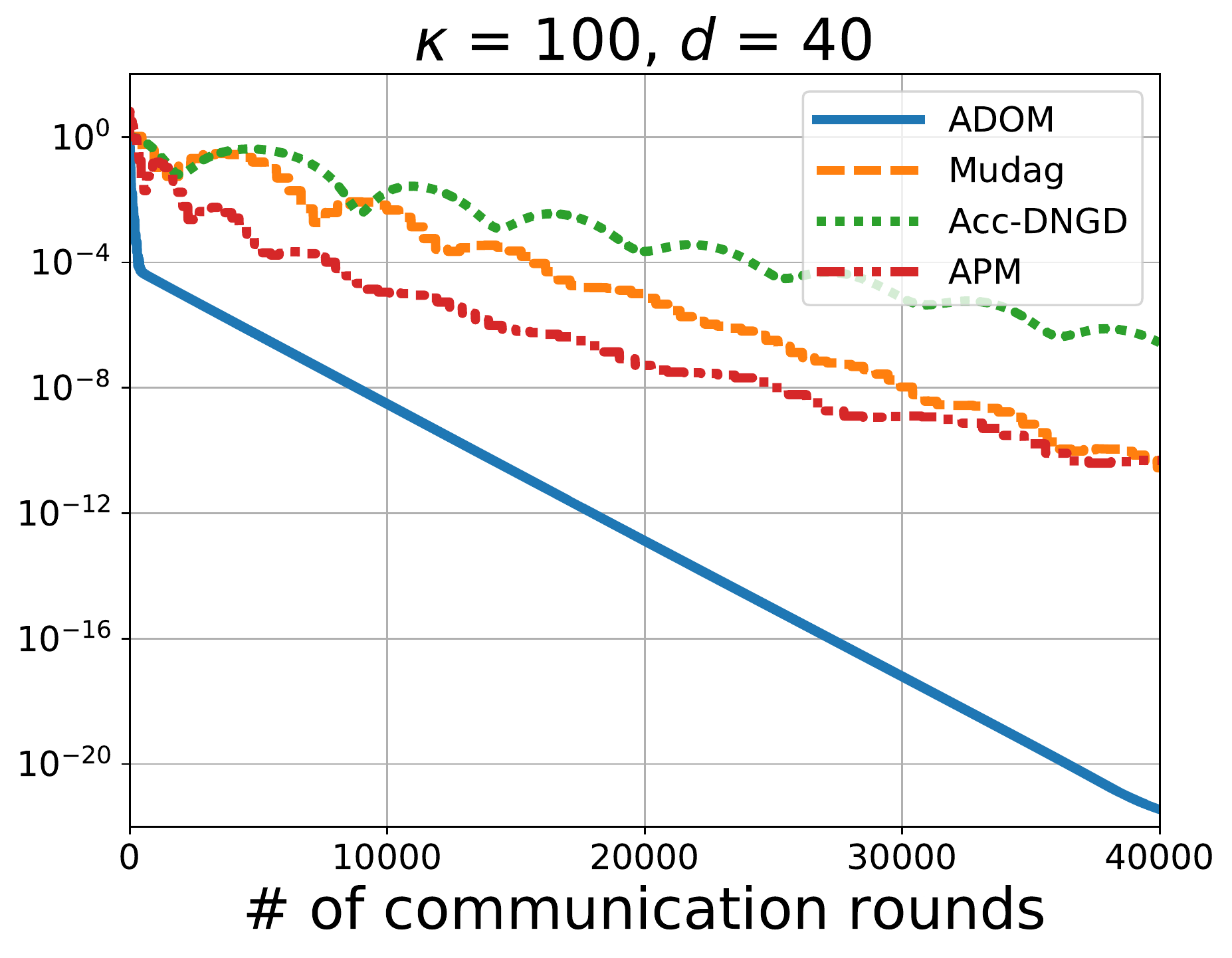}
	\includegraphics[width=0.24\linewidth]{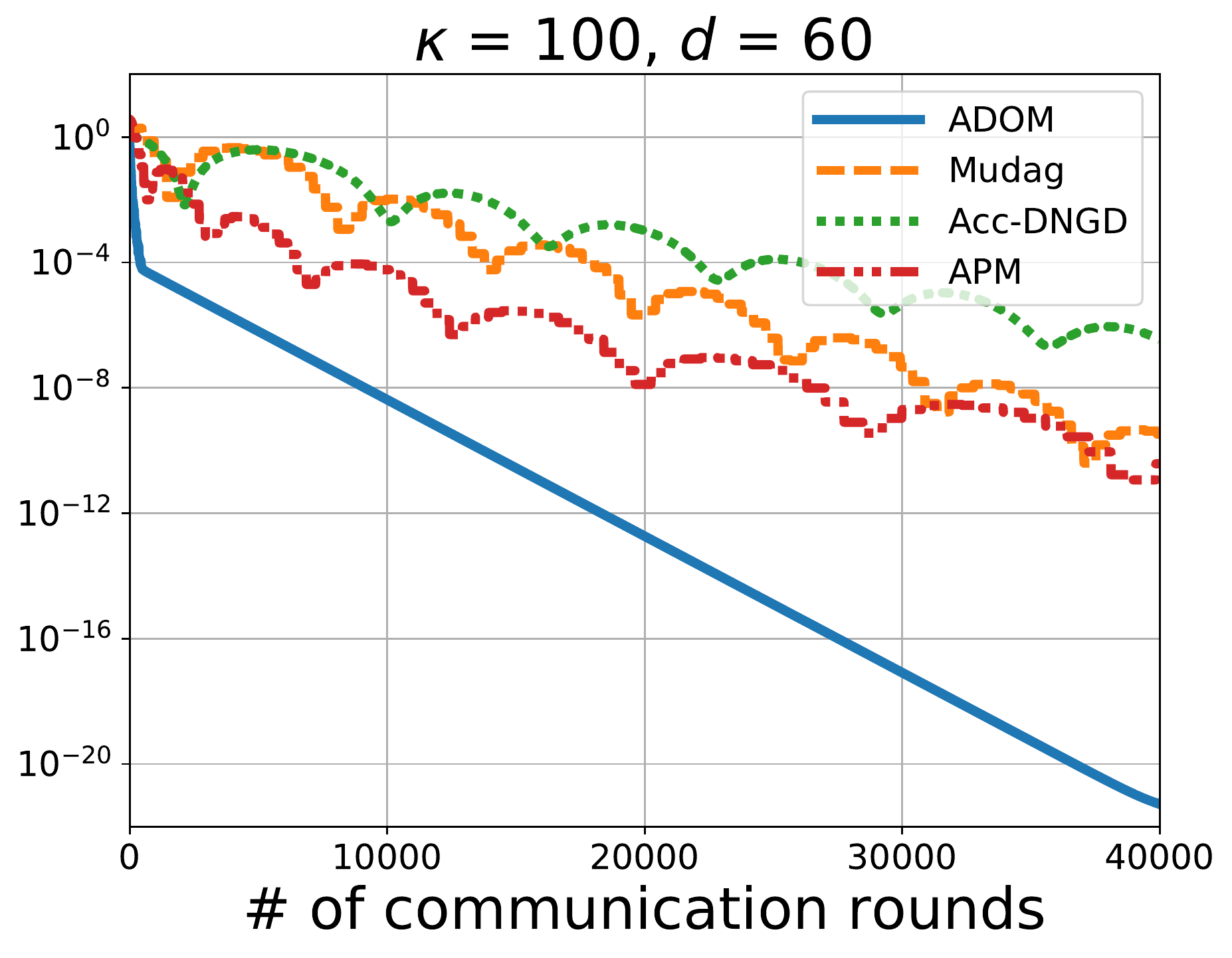}
	\includegraphics[width=0.24\linewidth]{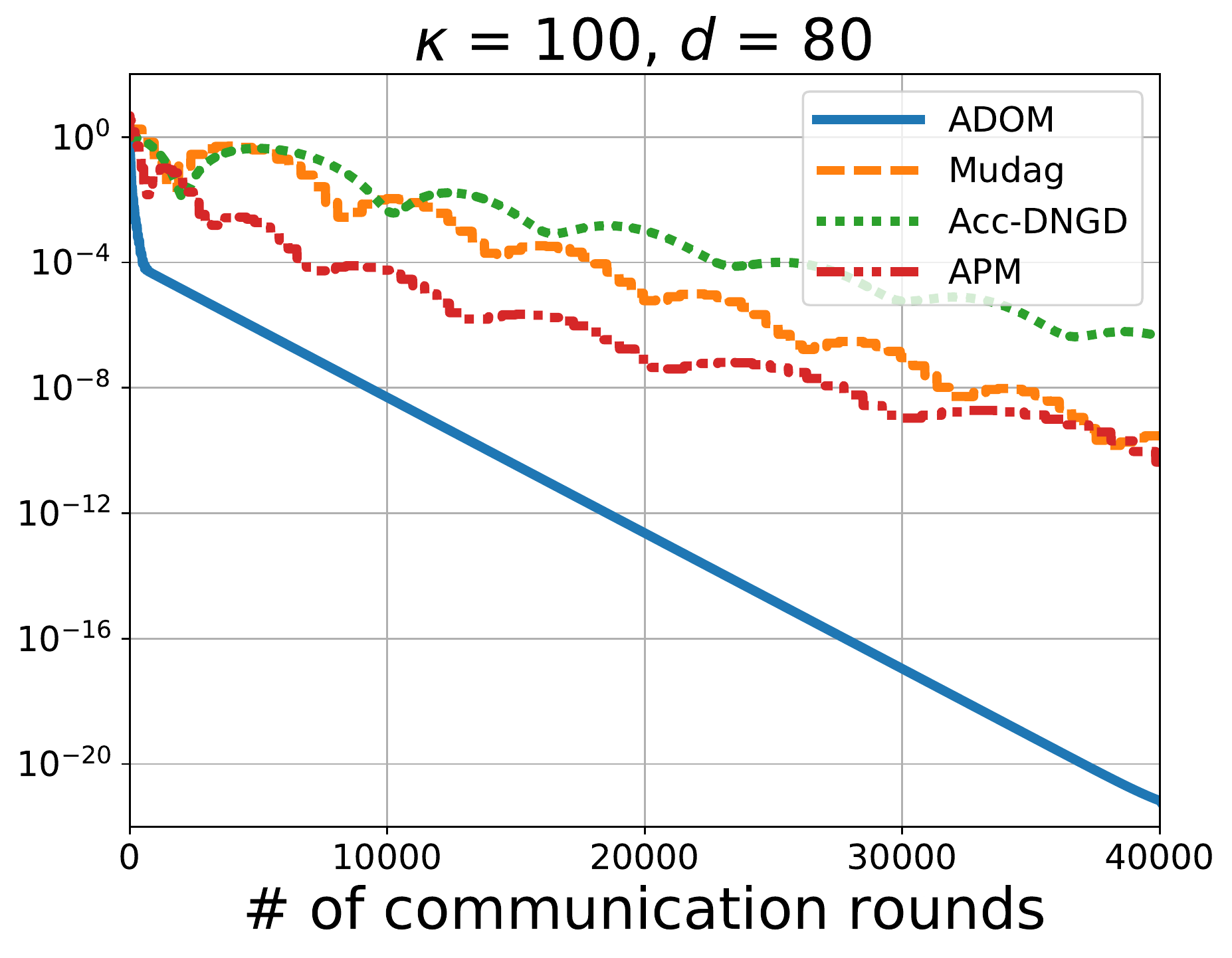}
	\includegraphics[width=0.24\linewidth]{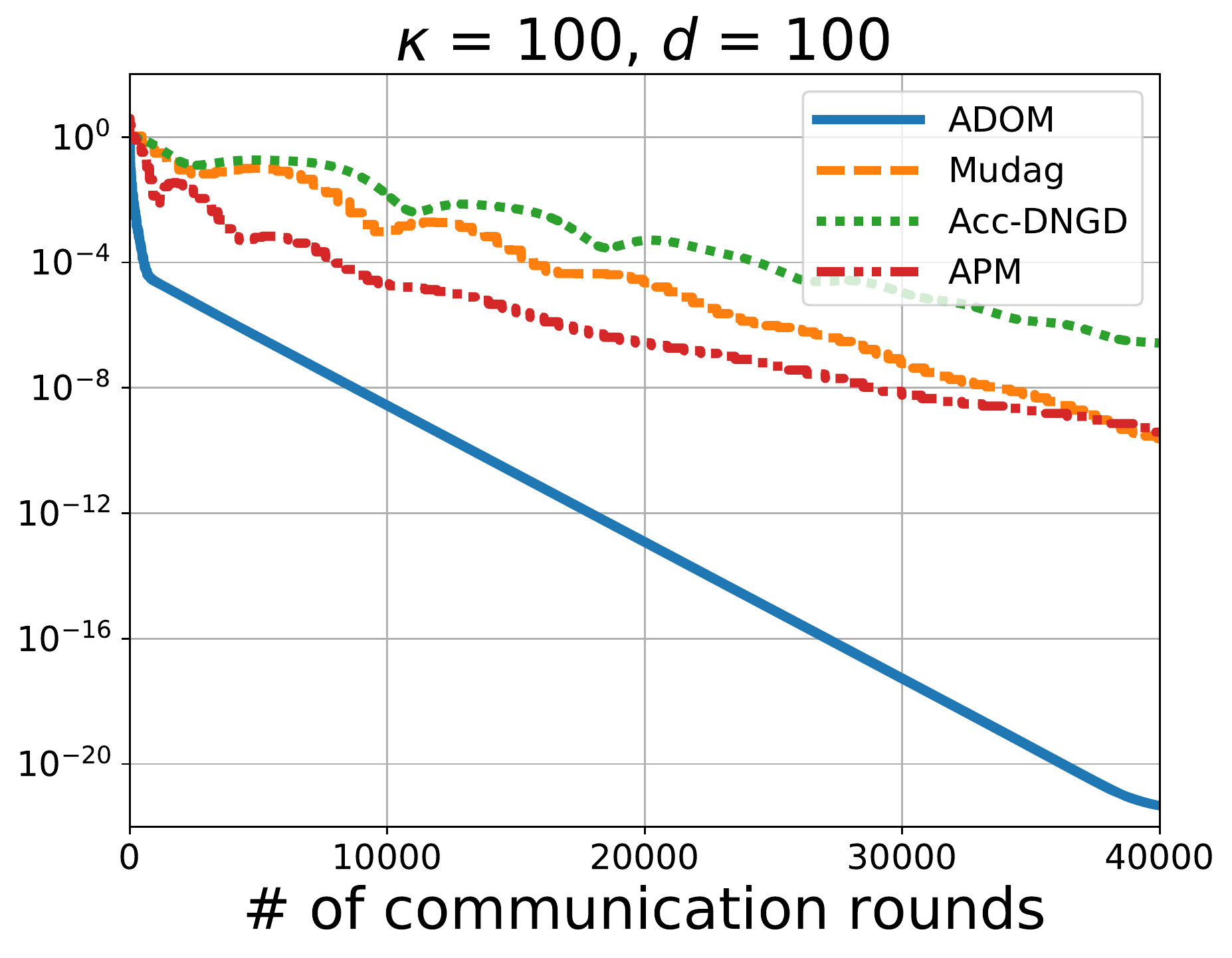}
	
	\includegraphics[width=0.24\linewidth]{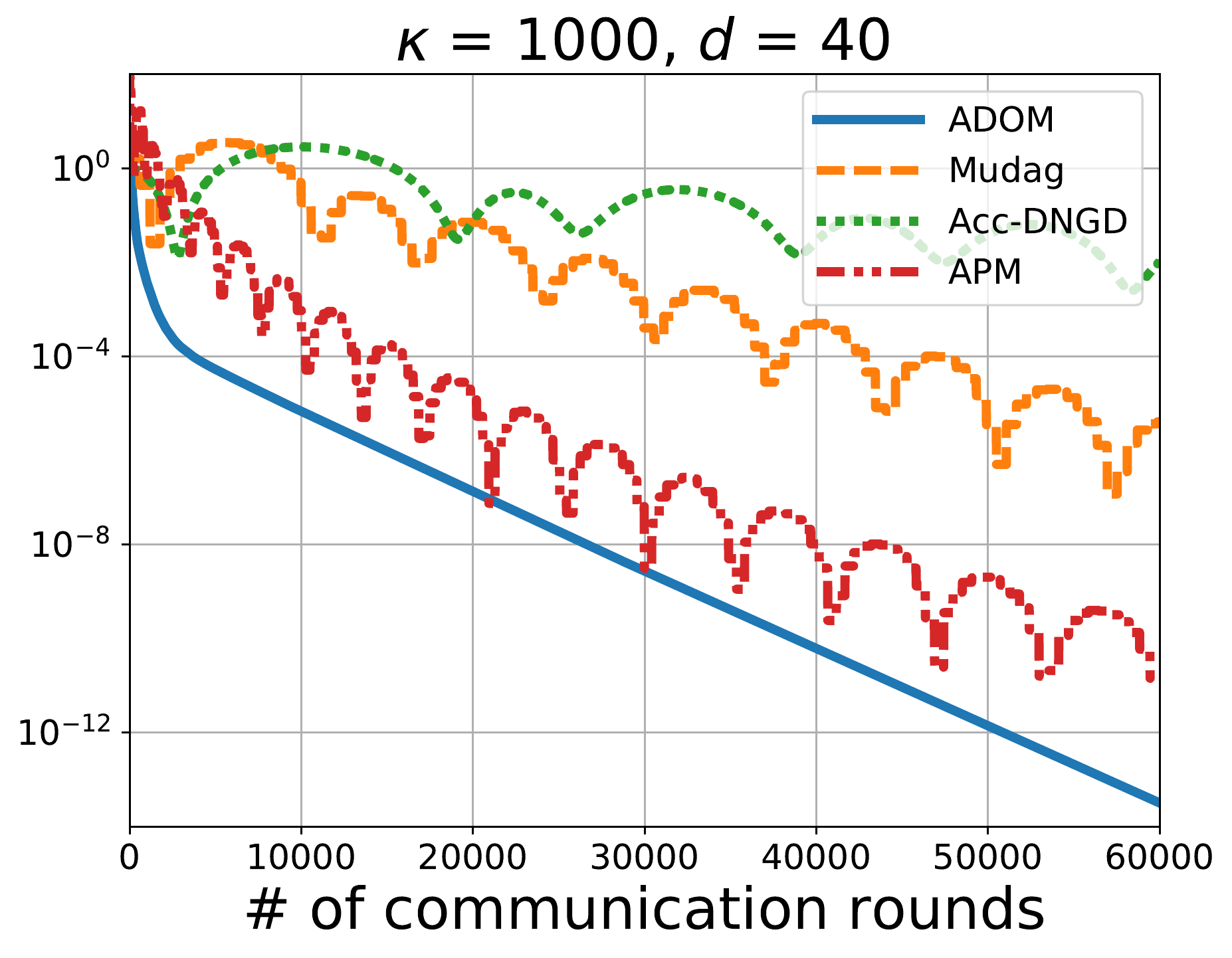}
	\includegraphics[width=0.24\linewidth]{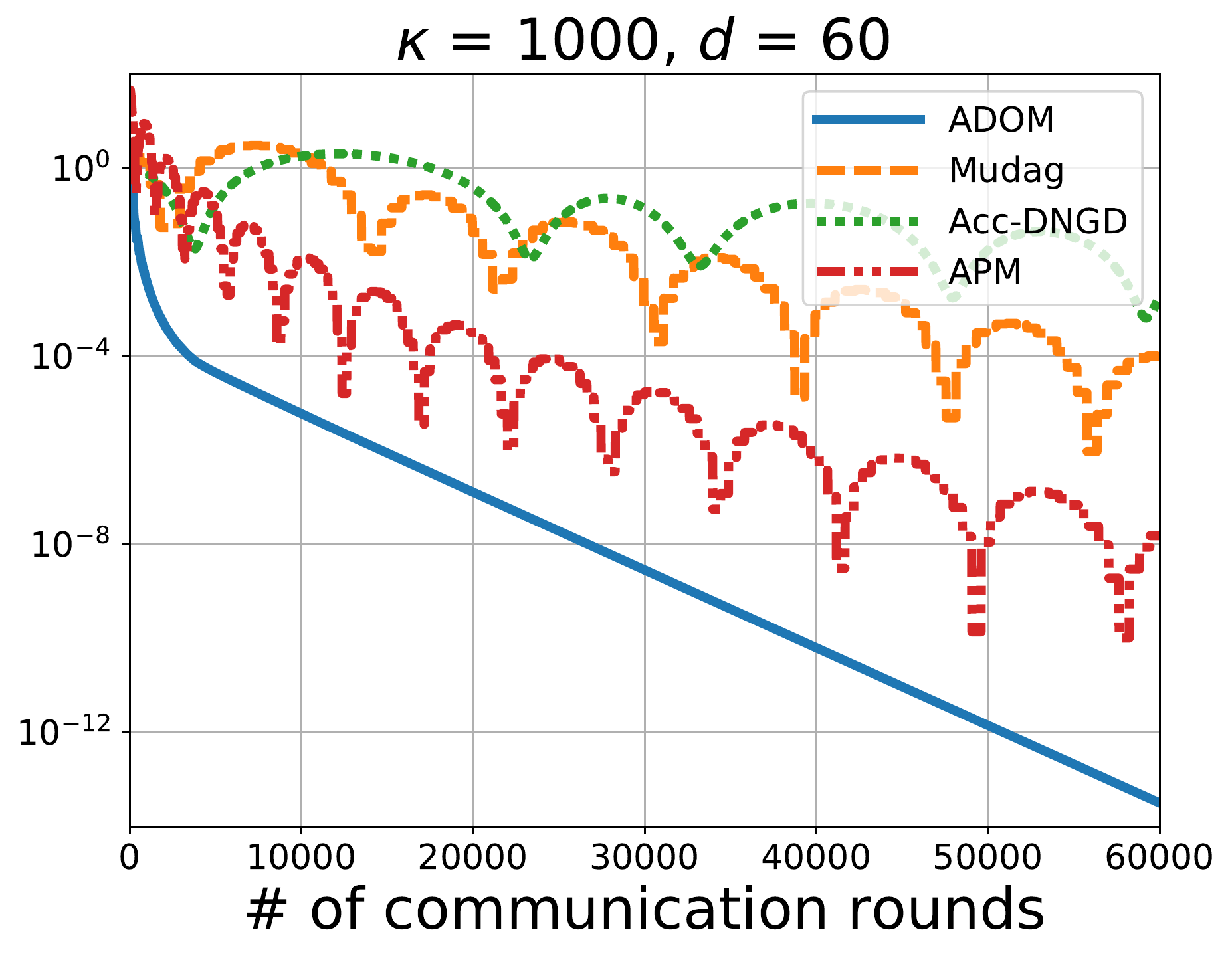}
	\includegraphics[width=0.24\linewidth]{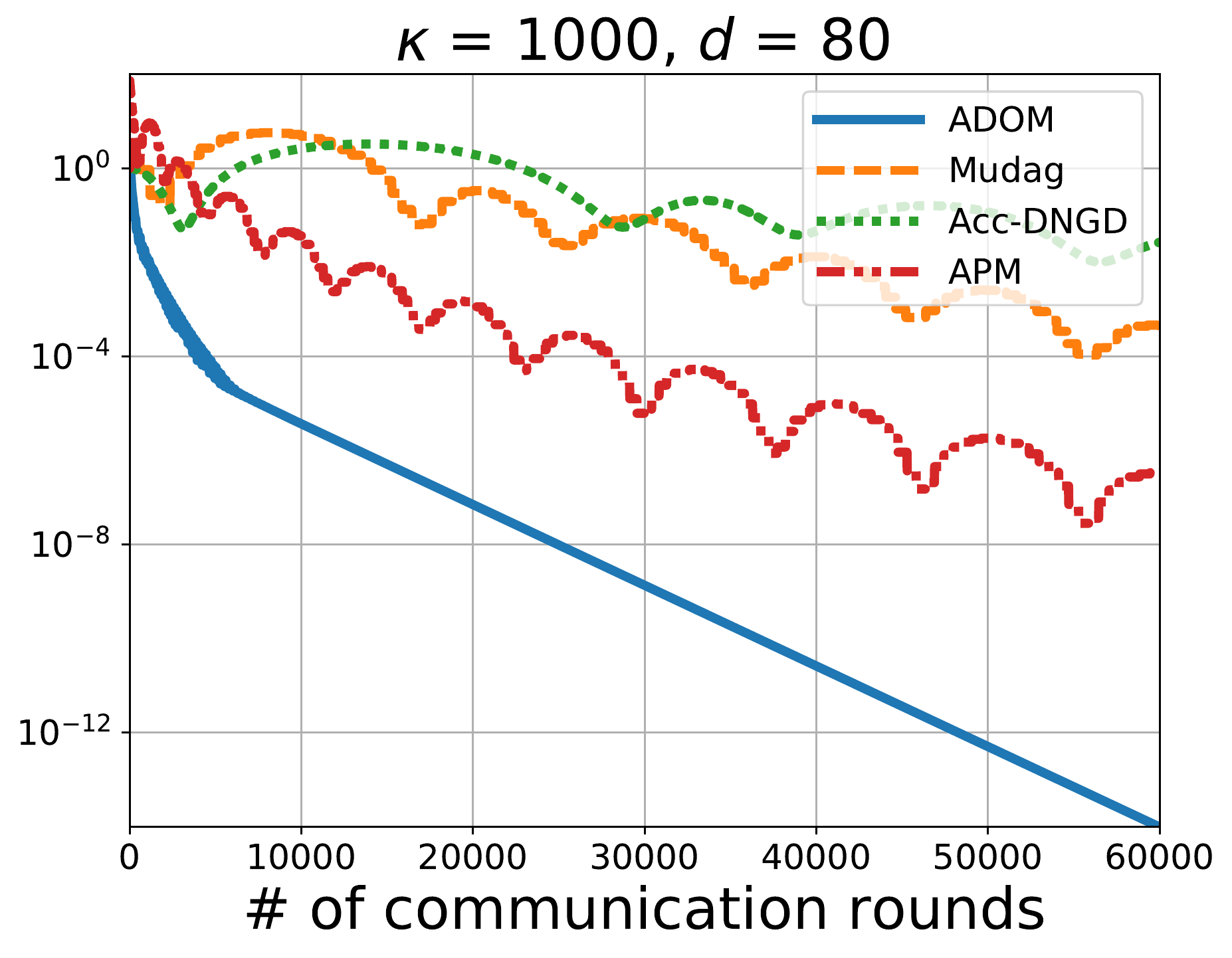}
	\includegraphics[width=0.24\linewidth]{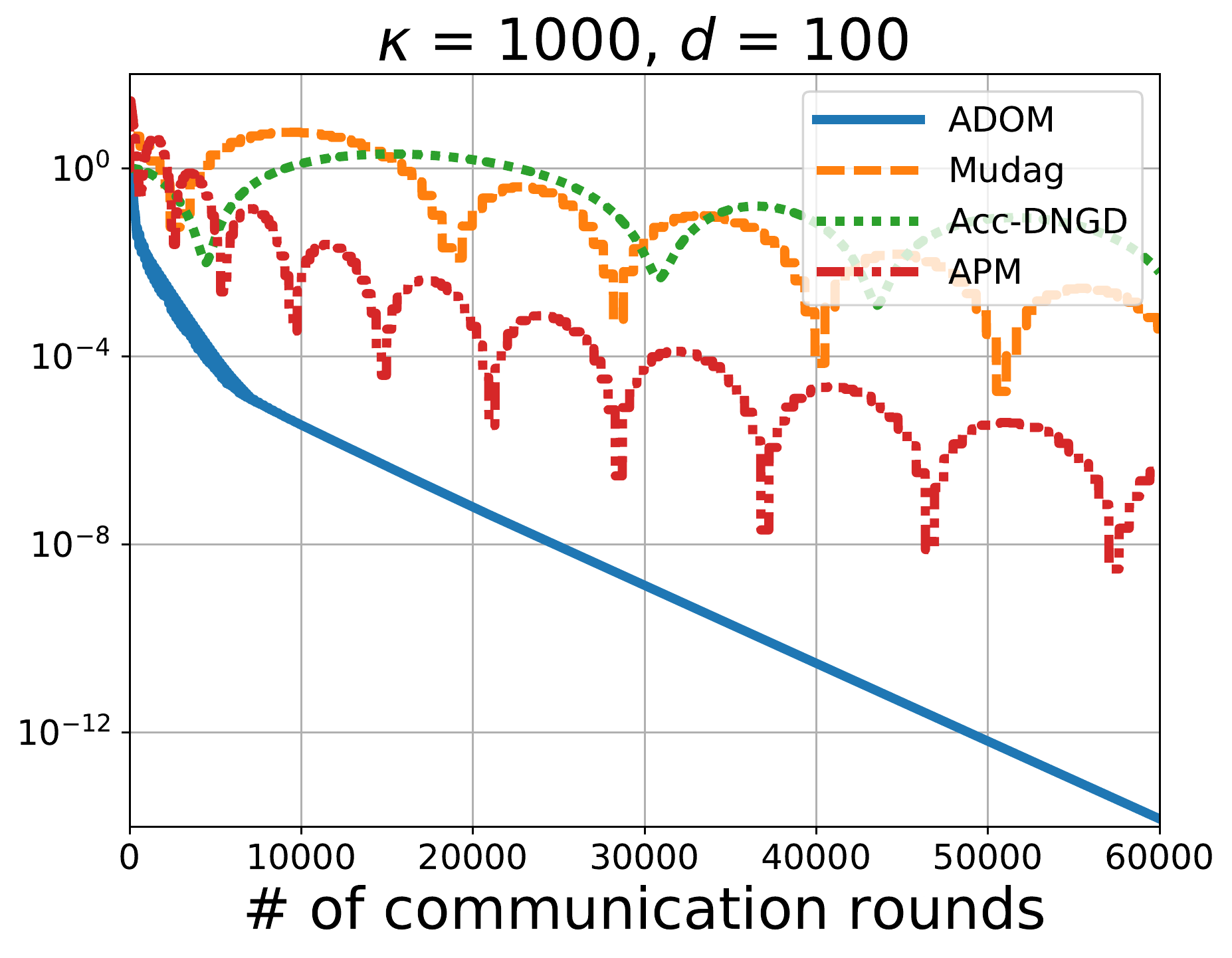}
	
	\includegraphics[width=0.24\linewidth]{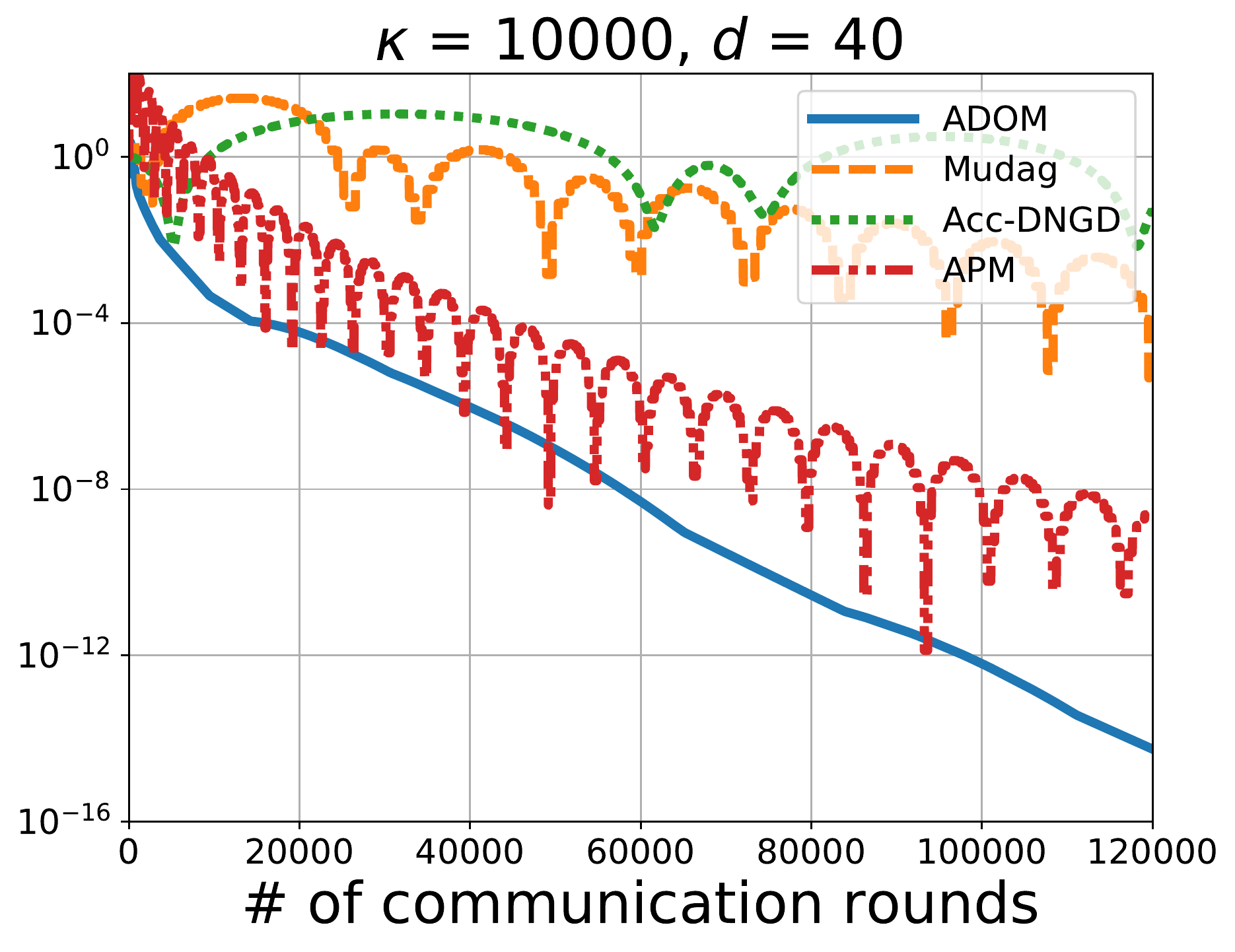}
	\includegraphics[width=0.24\linewidth]{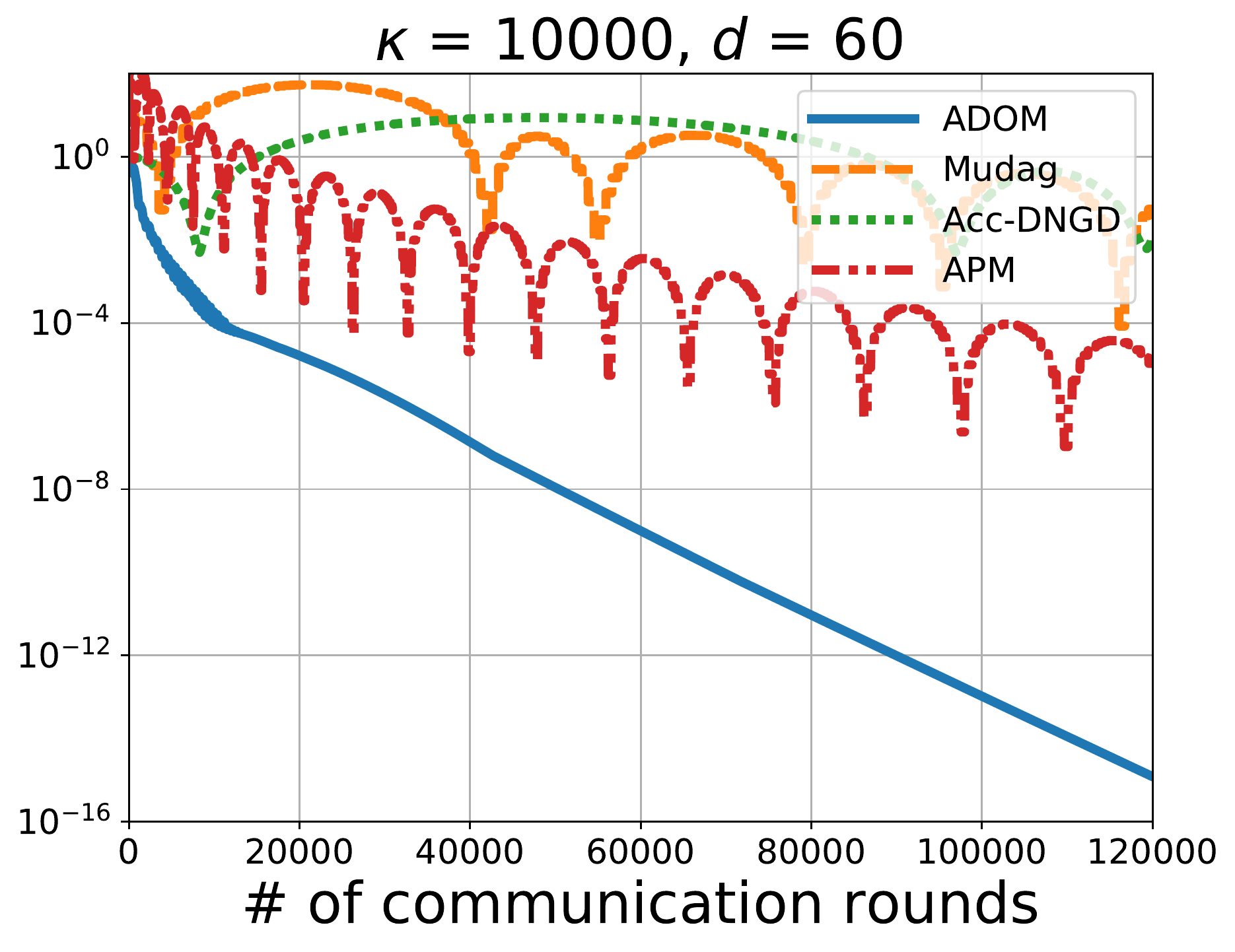}
	\includegraphics[width=0.24\linewidth]{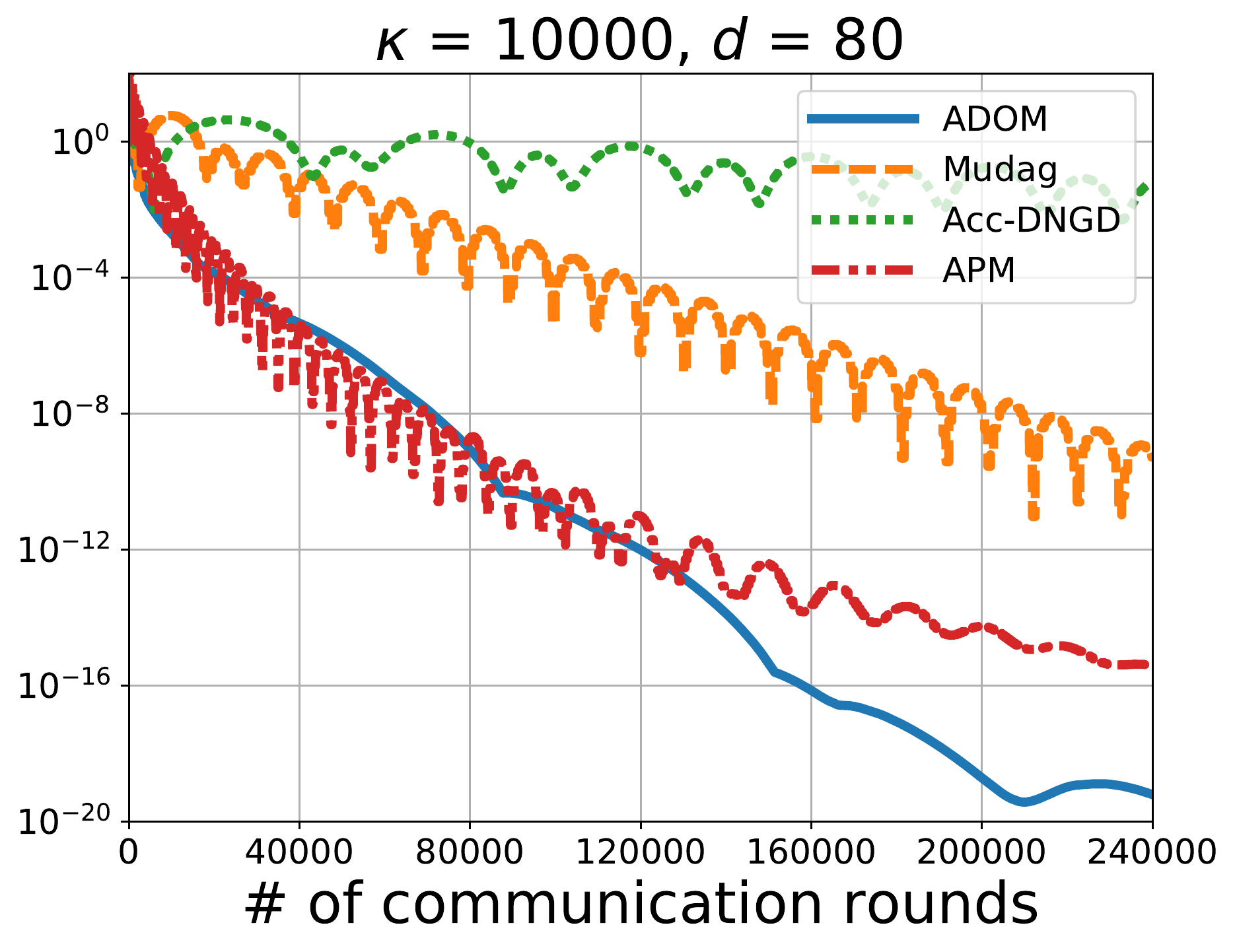}
	\includegraphics[width=0.24\linewidth]{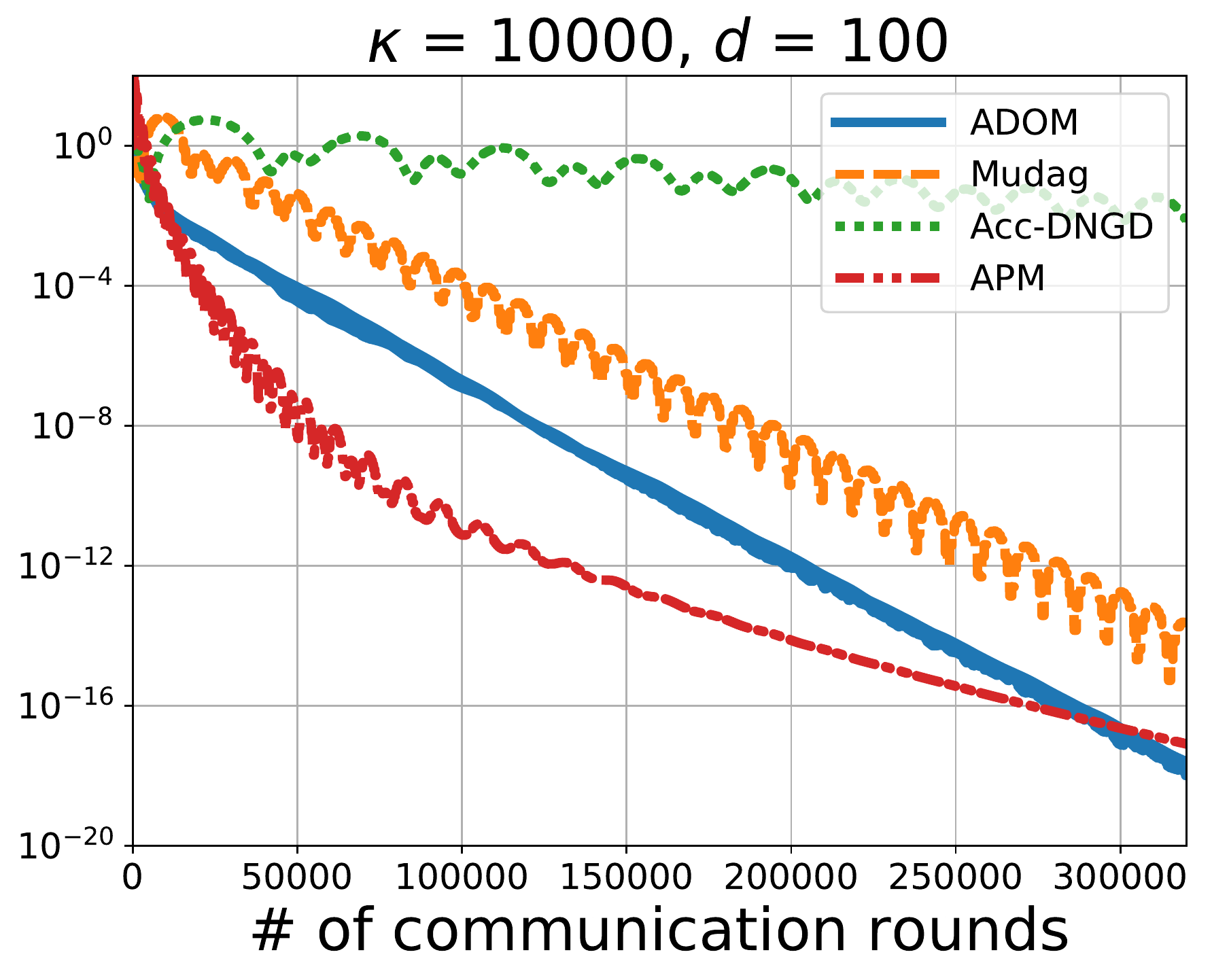}
	
	\caption{Comparison of Mudag, Acc-DNGD, APM and {\sf ADOM} on problems with $\chi \approx 30$, $d \in \{40,60,80,100\}$ and $\kappa \in \{10,10^2,10^3,10^4\}$.}
	\label{fig:kappa}	
\end{figure*}

\begin{figure*}[!htb]
	\centering
	\includegraphics[width=0.24\linewidth]{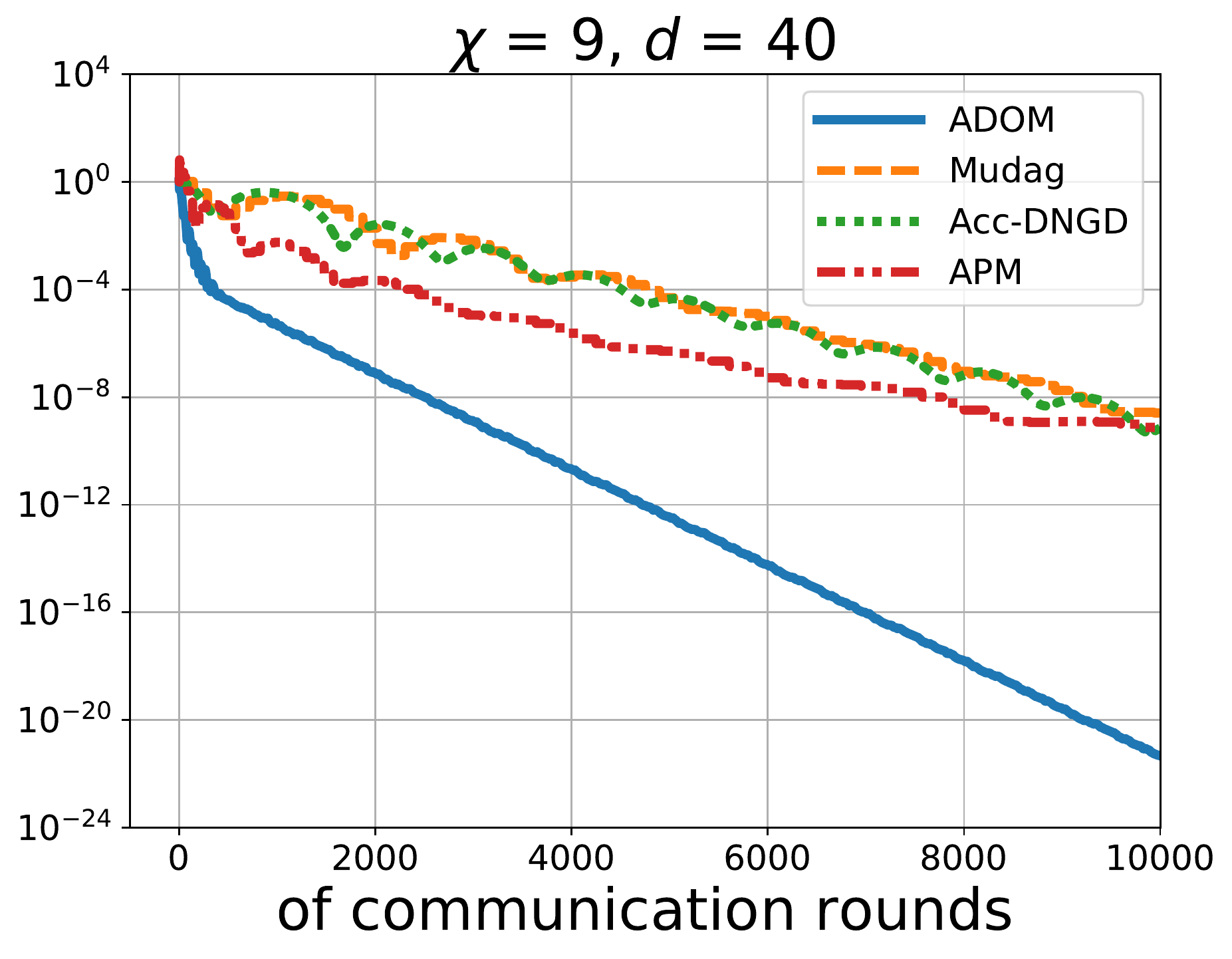}
	\includegraphics[width=0.24\linewidth]{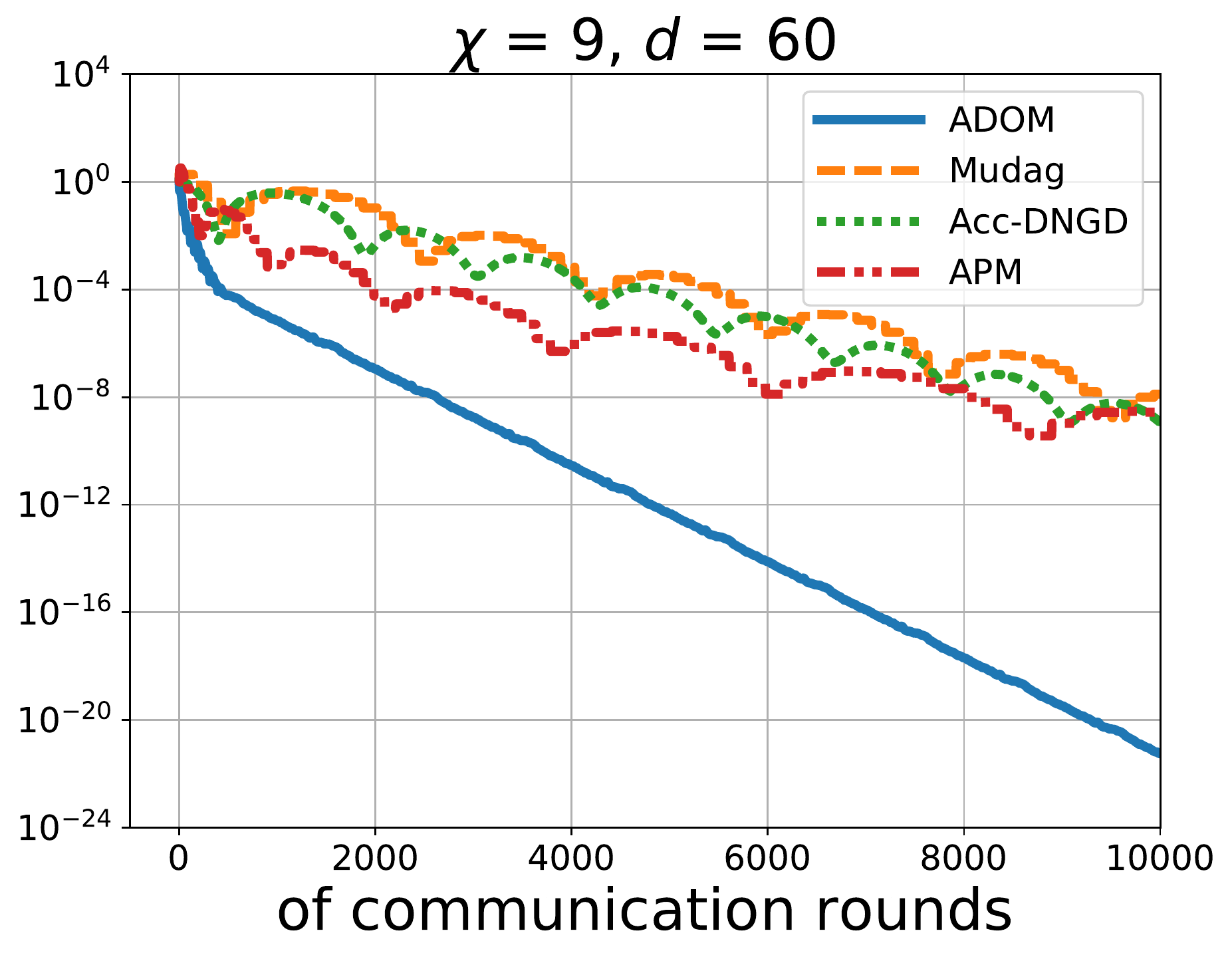}
	\includegraphics[width=0.24\linewidth]{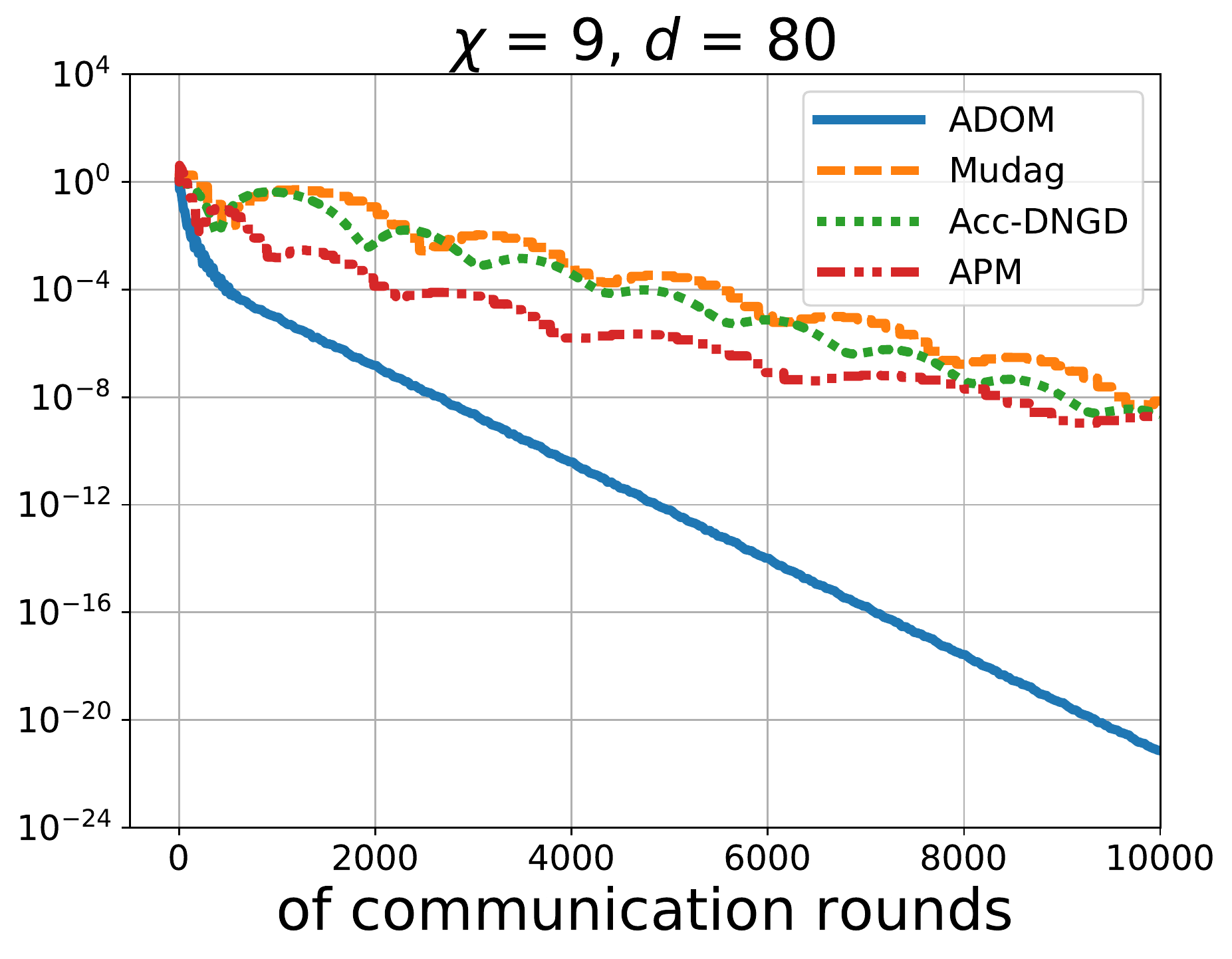}
	\includegraphics[width=0.24\linewidth]{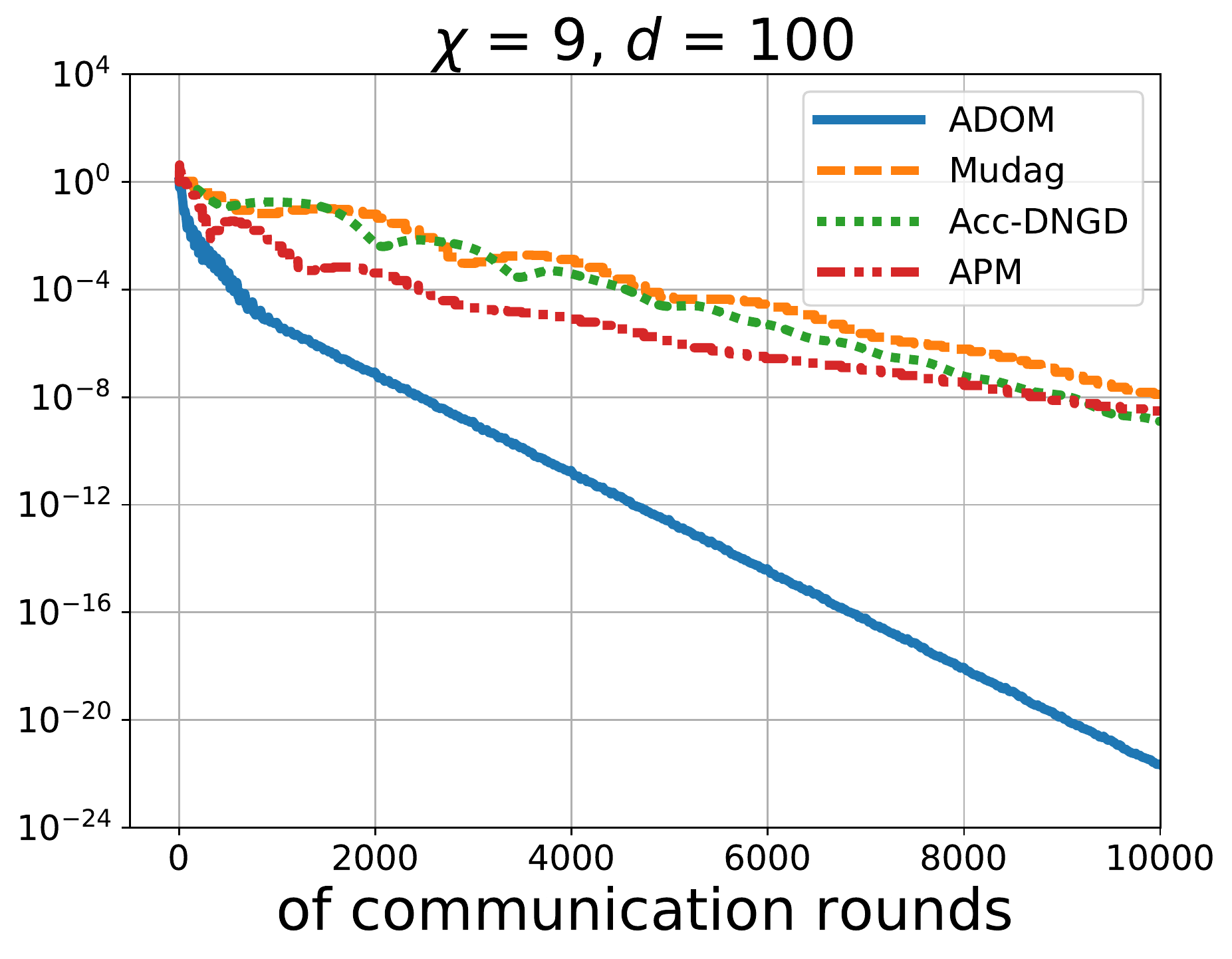}
	
	\includegraphics[width=0.24\linewidth]{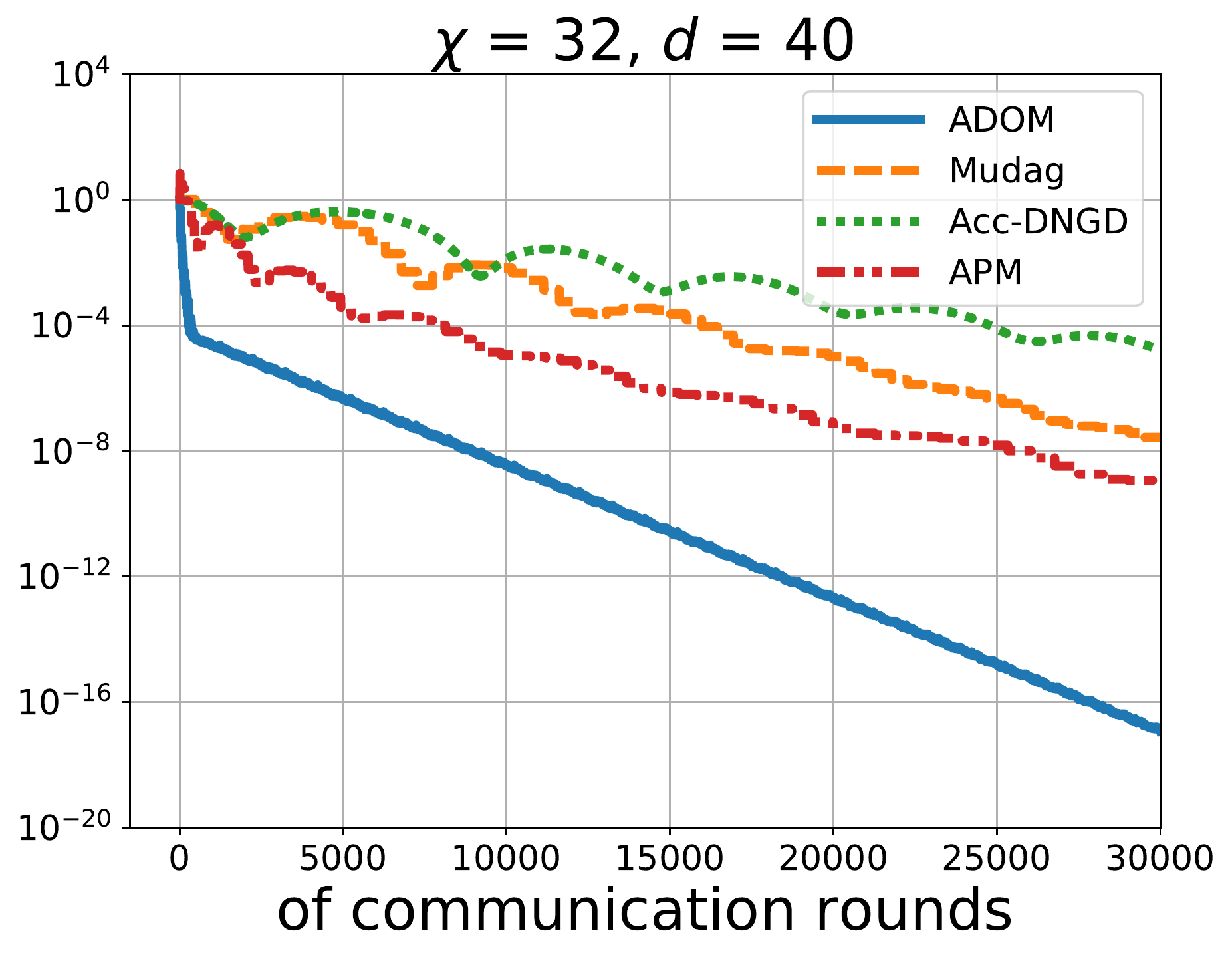}
	\includegraphics[width=0.24\linewidth]{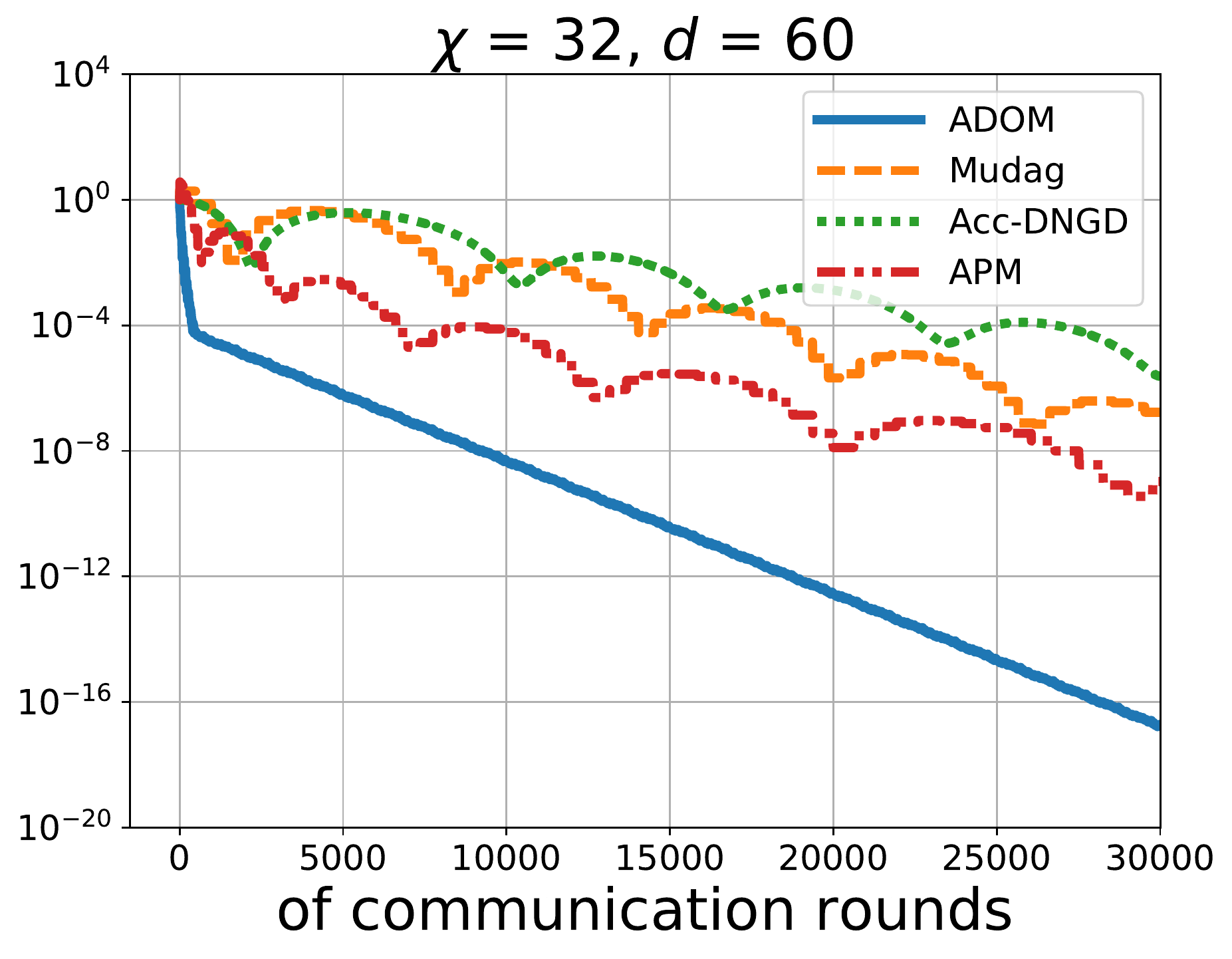}
	\includegraphics[width=0.24\linewidth]{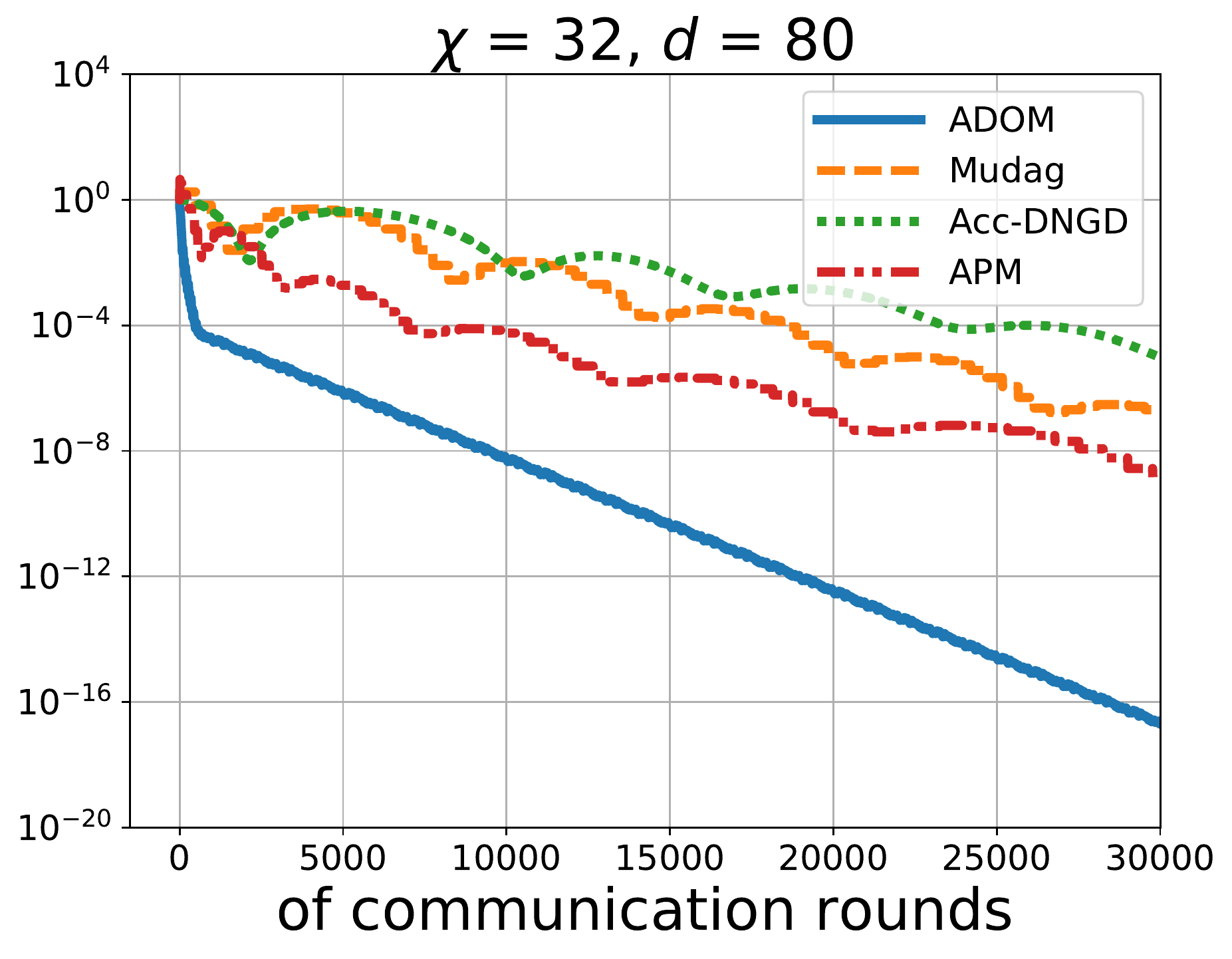}
	\includegraphics[width=0.24\linewidth]{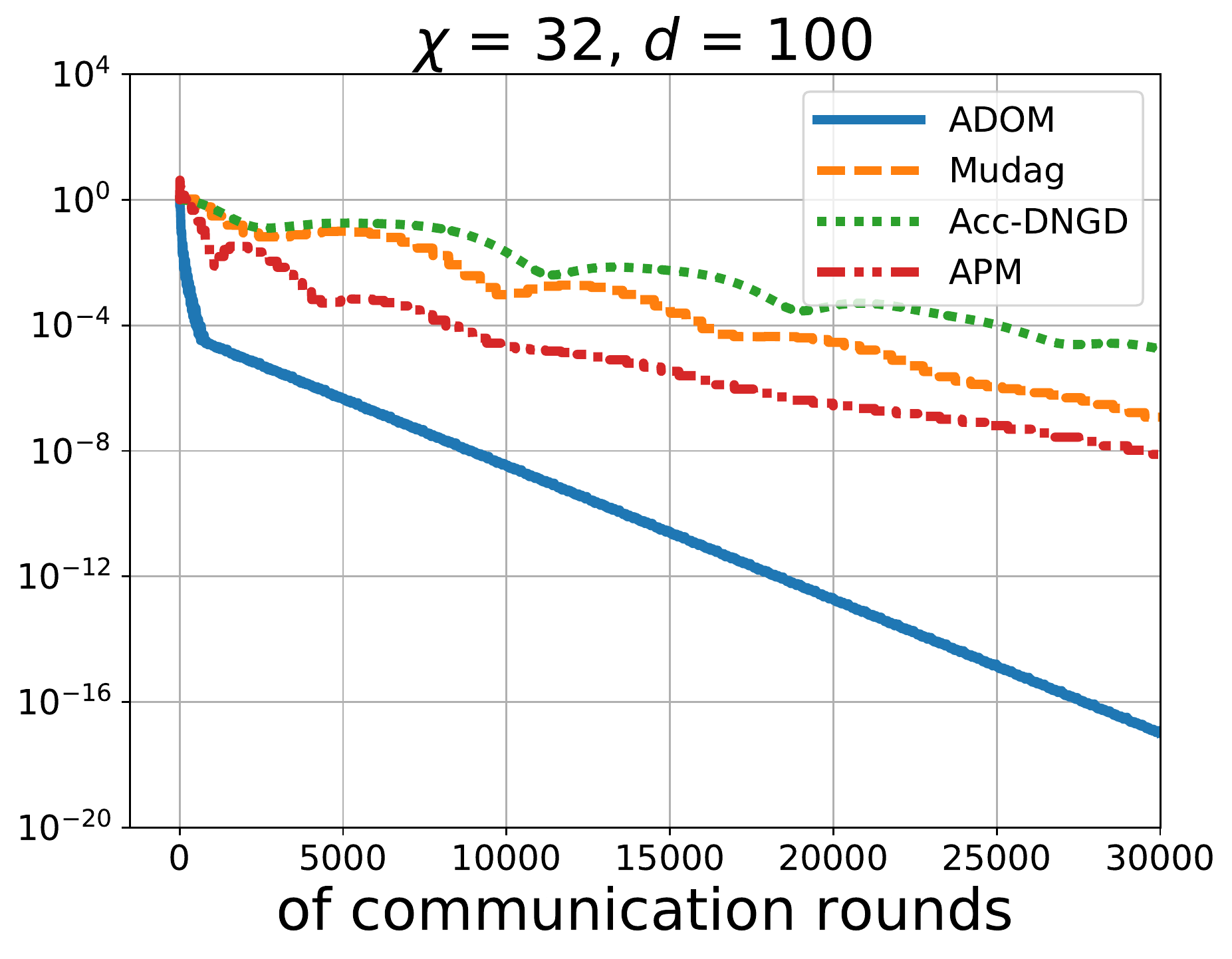}
	
	\includegraphics[width=0.24\linewidth]{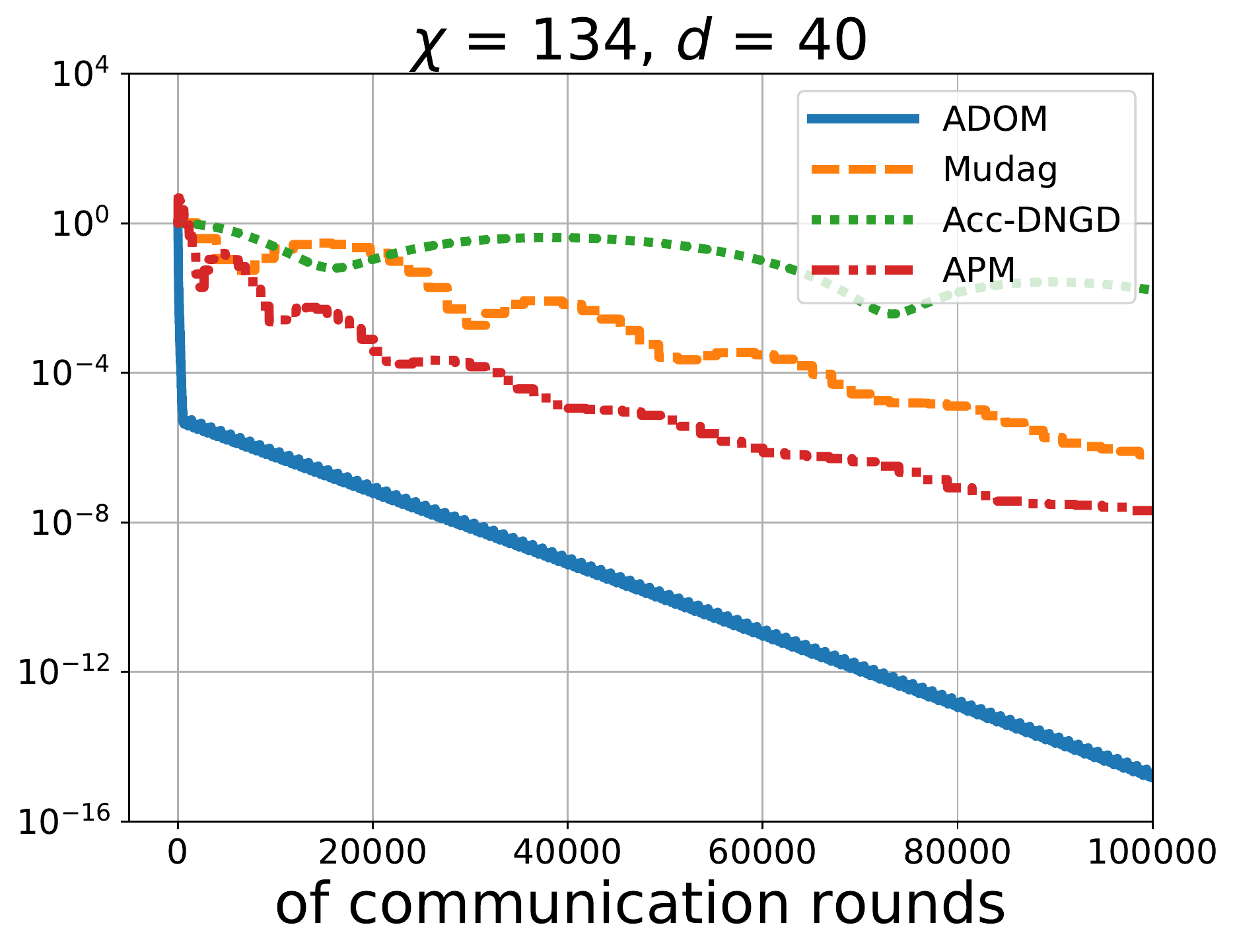}
	\includegraphics[width=0.24\linewidth]{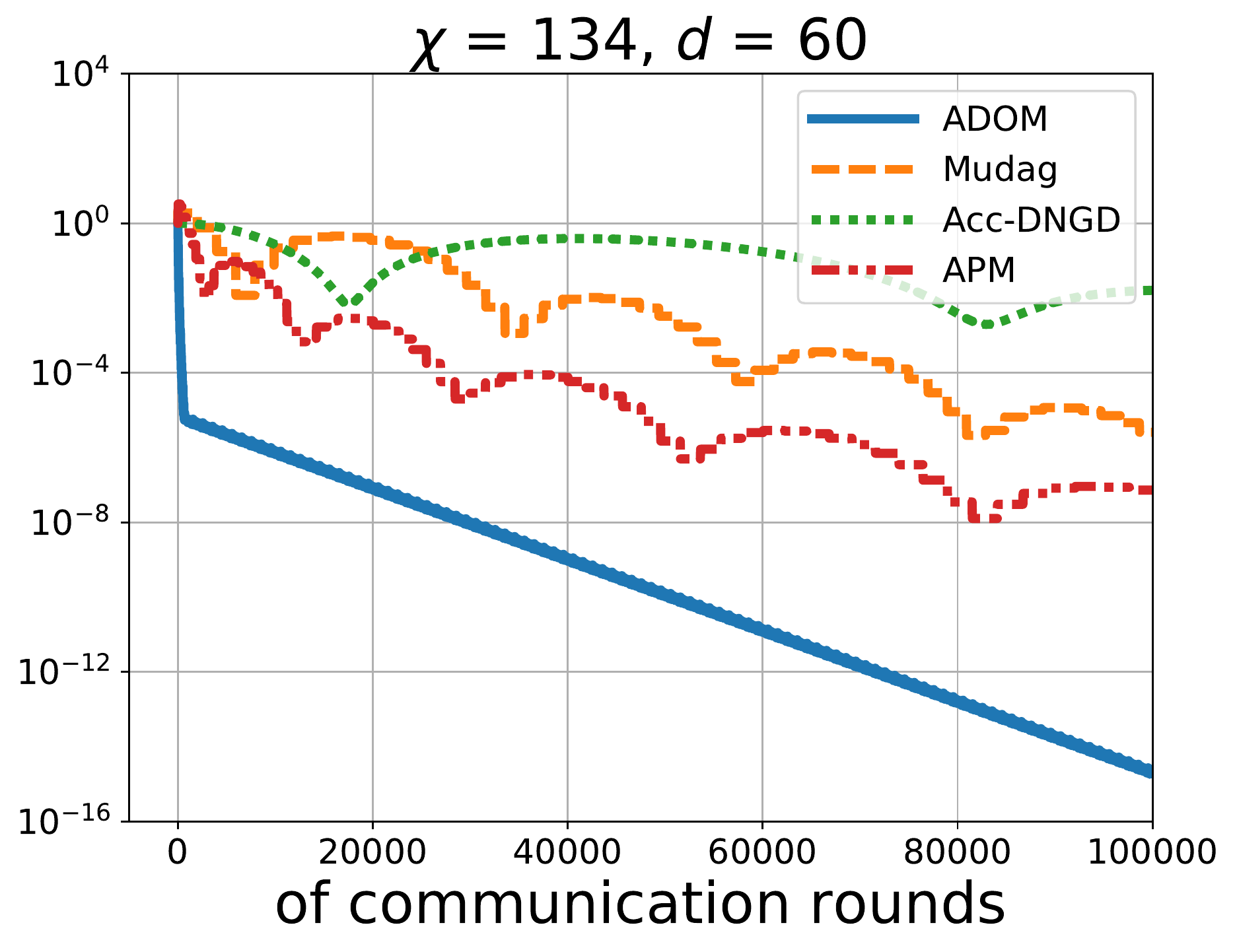}
	\includegraphics[width=0.24\linewidth]{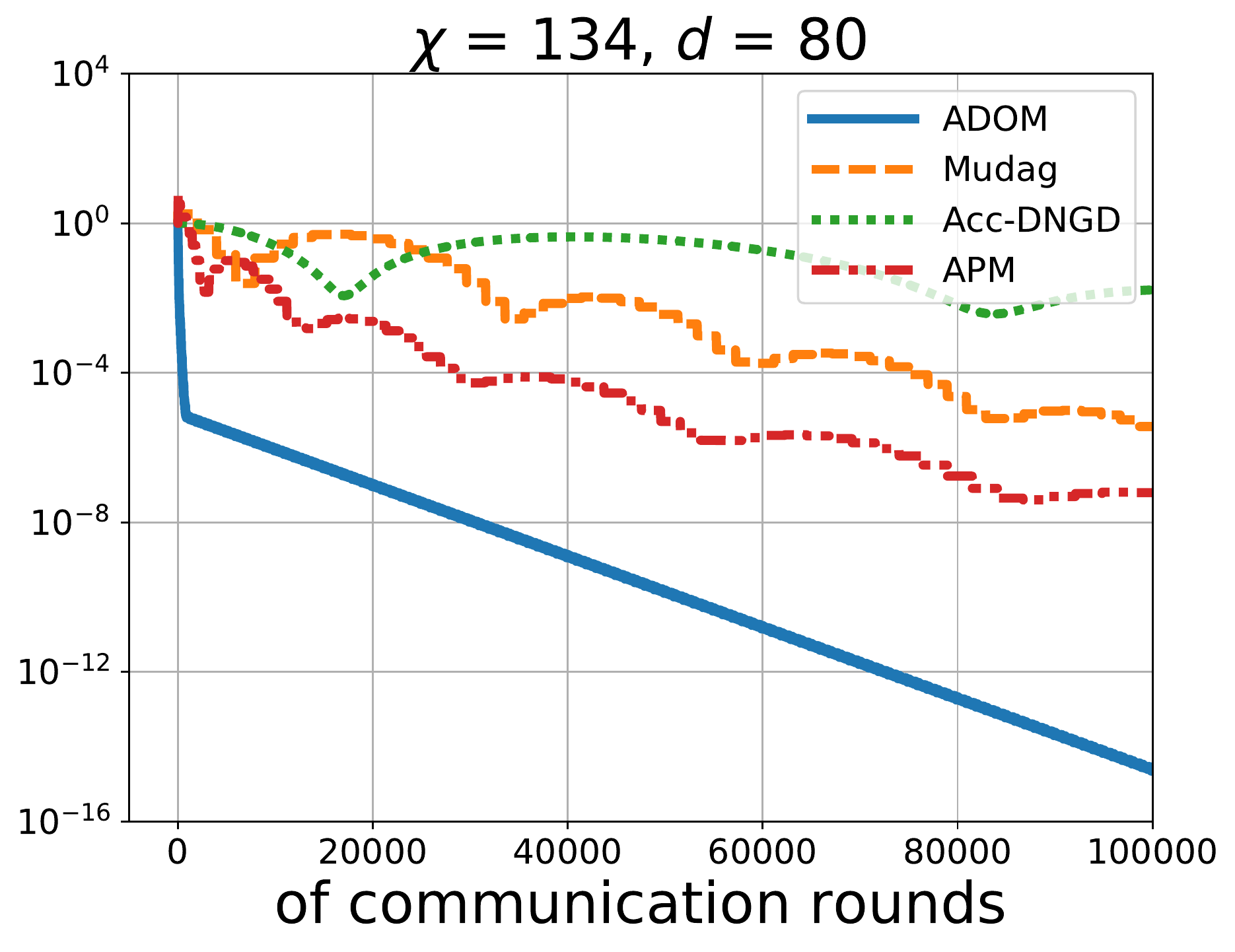}
	\includegraphics[width=0.24\linewidth]{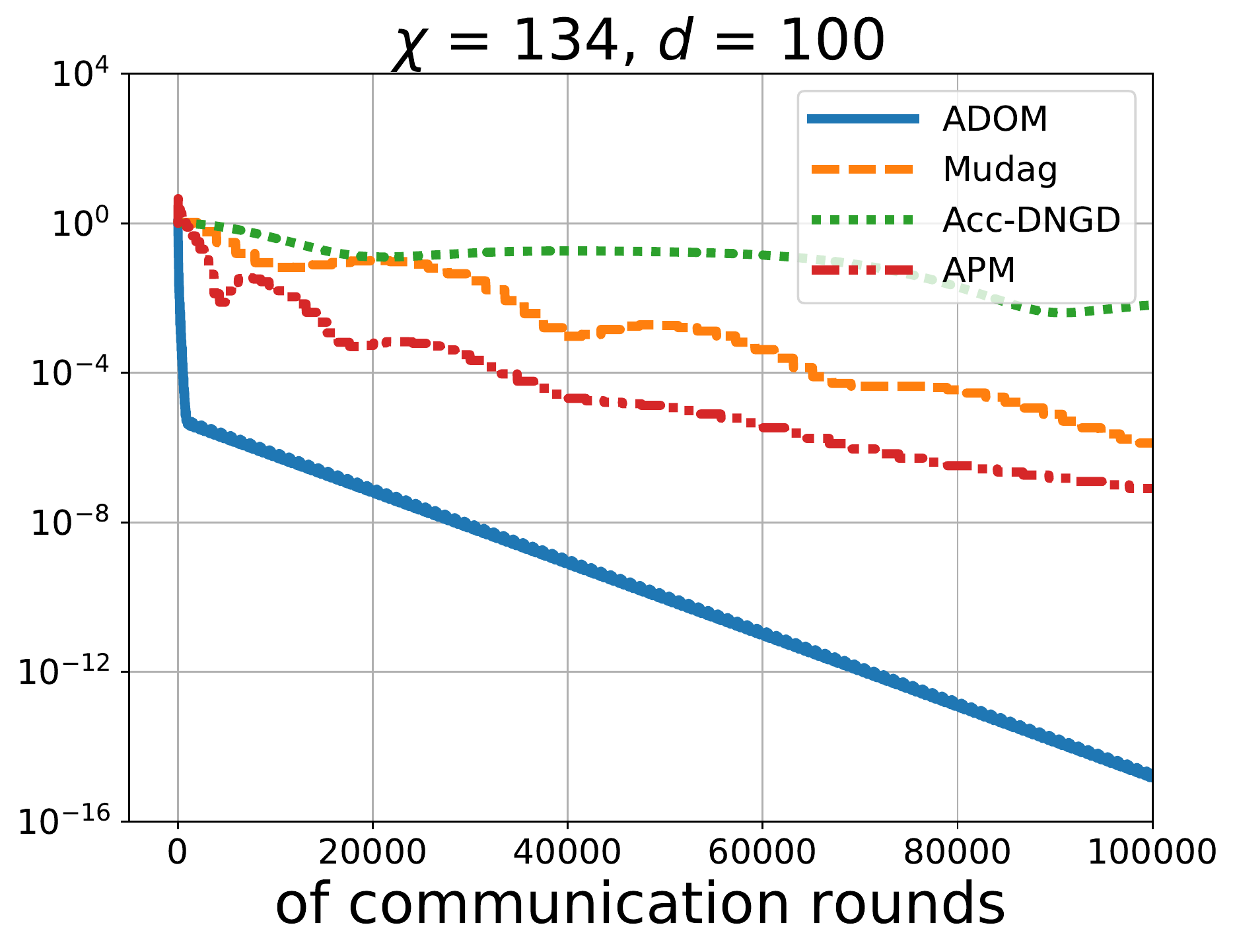}
	
	\includegraphics[width=0.24\linewidth]{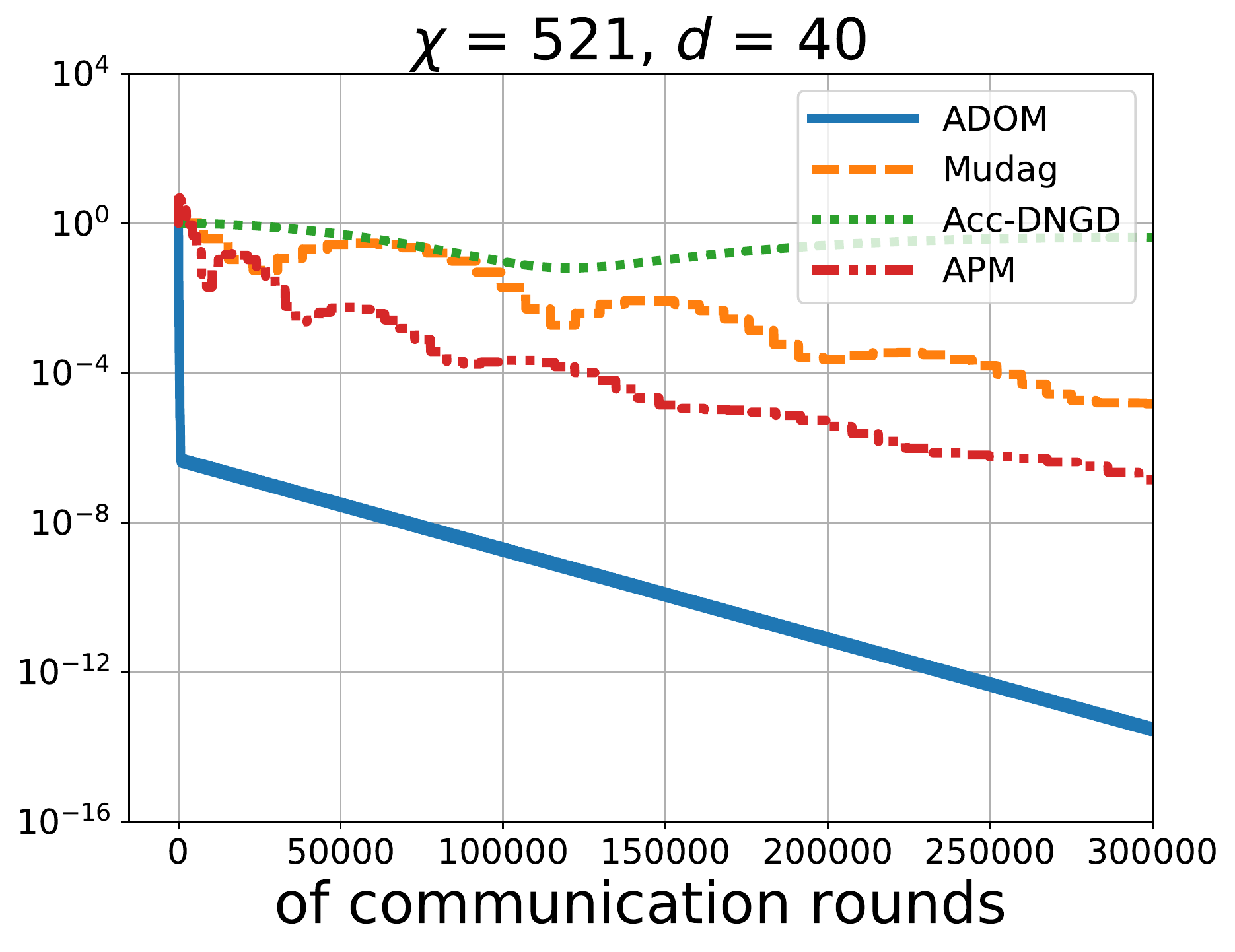}
	\includegraphics[width=0.24\linewidth]{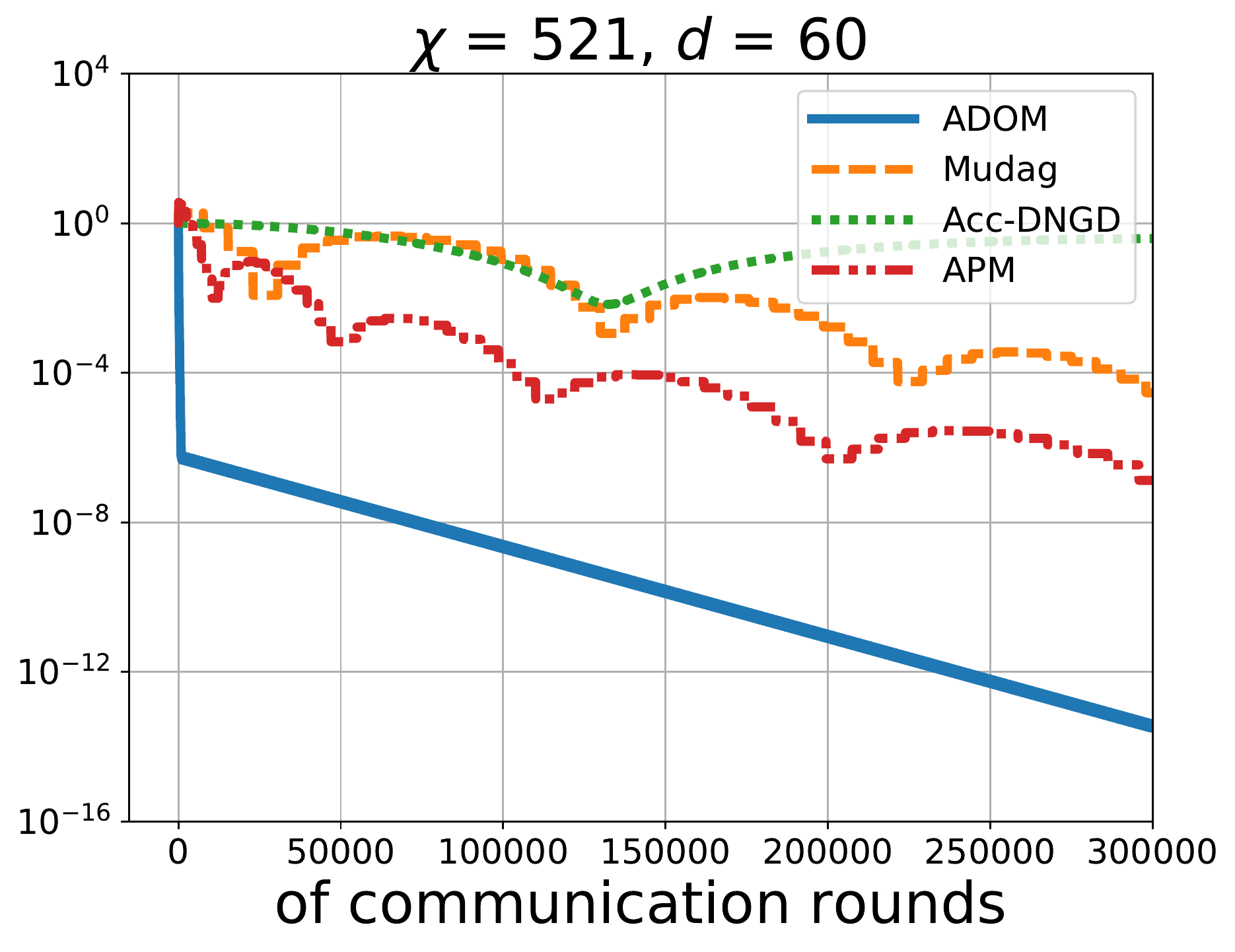}
	\includegraphics[width=0.24\linewidth]{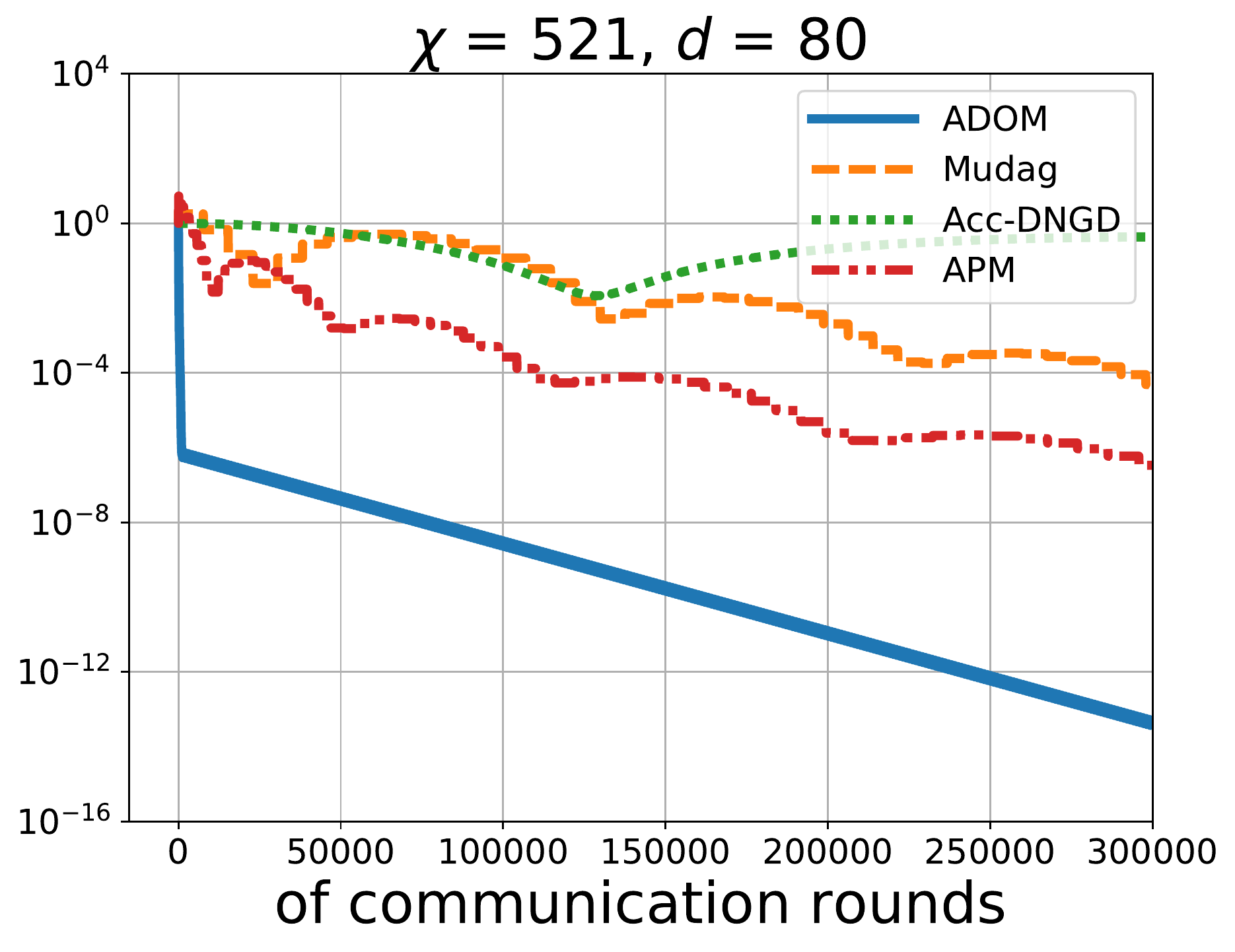}
	\includegraphics[width=0.24\linewidth]{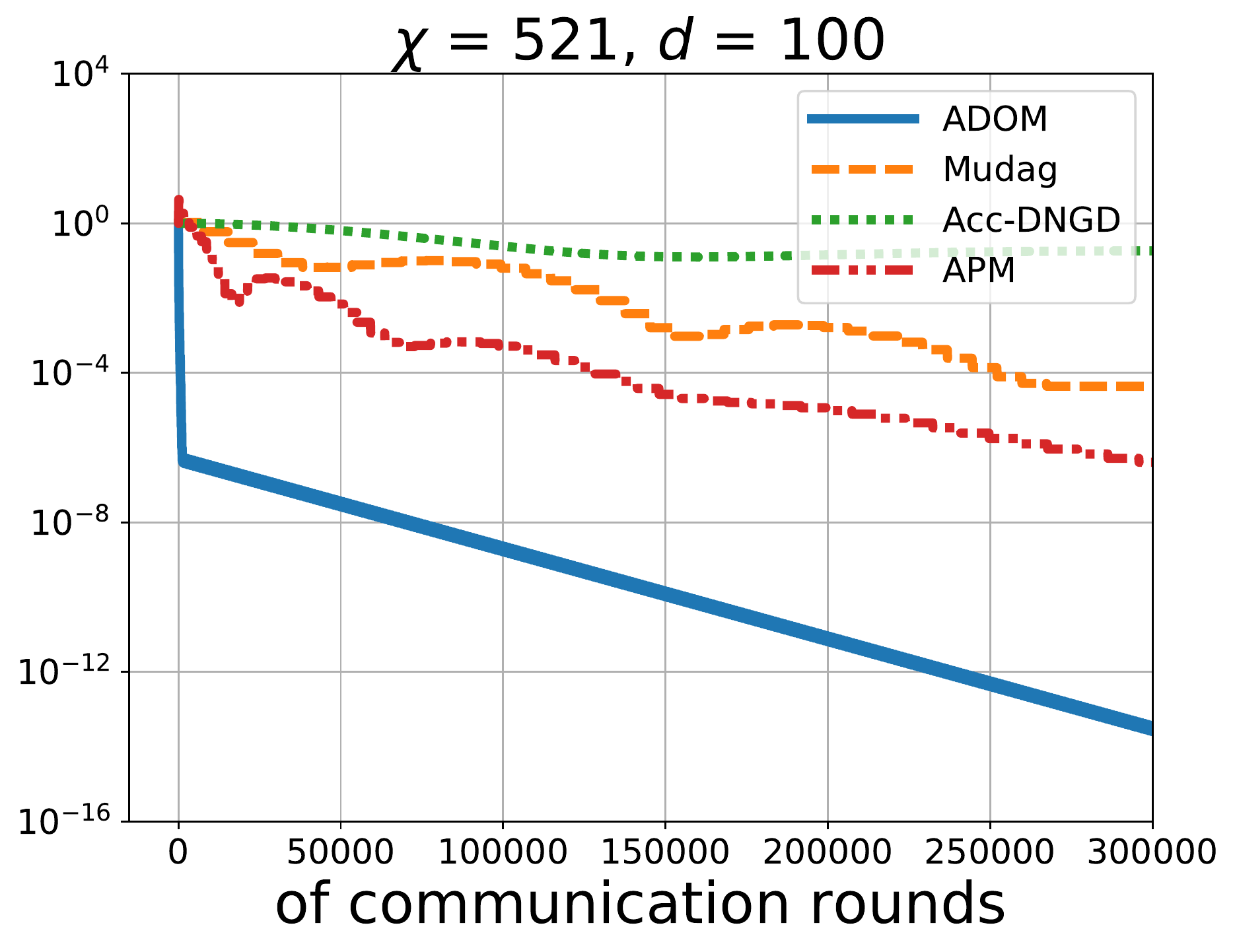}
	
	\caption{Comparison of Mudag, Acc-DNGD, APM and {\sf ADOM} on problems with $\chi \in \{9, 32,134,521\}$, $d \in \{40,60,80,100\}$ and $\kappa = 100$.}
	\label{fig:chi}	
\end{figure*}

In this section we perform experiments with logistic regression for binary classification with $\ell^2$ regularization. That is, our loss function has the form
\begin{equation} \label{eq:log_reg}
\squeeze 
f_i(x) = \frac{1}{m}\sum \limits_{j=1}^m\log (1 + \exp(-b_{ij}a_{ij}^\top x)) + \frac{r}{2}\sqn{x},
\end{equation}
where $a_{ij} \in \R^d$ and $b_{ij}\in \{-1,+1\}$ are data points and labels, $r>0$ is a regularization parameter, and $m$ is the number of data points stored on each node.
In our experiments we use function \texttt{sklearn.datasets.make\_classification} from scikit-learn library for dataset generation. We generate a number of datasets consisting of $10,000$ samples, distributed to the $n=100$ nodes of the network with $m=100$ samples on each node. We vary $r$ to obtain different values of the condition number $\kappa$. We also vary the number of features $d$. 

In order to simulate a time-varying network, we use geometric random graphs. That is, we generate $n=100$ nodes from the uniform distribution over $[0,1]^2 \subset \R^2$ and connect each pair of nodes whose distance is less than a certain {\em radius}. Since a geometric graph is likely to be disconnected when the radius is small, we enforce connectivity by adding a minimal number of edges. We obtain a sequence of networks $\{\cG^k\}_{k=0}^\infty$ by generating $1,000$ random geometric graphs and switching between them in a cyclic way. For each $k$, matrix $\mW^k$ is chosen to be the Laplacian of graph $\cG^k$ divided by its largest eigenvalue. We obtain different values of the time-varying network structure parameter $\chi$ by choosing different values of the radius.

One potential problem with {\sf ADOM} is that it has to calculate the dual gradient $\g F^*(z_g^k)$, which is known to be the solution of the following problem:
\begin{equation}\label{eq:dual_grad}
\g F^*(z_g^k) = \argmin_{x \in (\R^d)^\cV} F(x) - \<x, z_g^k>.
\end{equation}
In practice, $\g F^*(z_g^k)$ may be hard to compute. In our experiments we solve this issue by calculating $\g F^*(z_g^k)$ inexactly using $T$ iterations of Gradient Descent (GD) or Accelerated Gradient Descent (AGD) initialized with the previous estimate of $\g F^*(z_g^{k-1})$. It turns out that it is sufficient to use $T\leq 3$ to obtain a good convergence rate in practice.

\subsection{Experiment 1: While DNM may diverge, {\sf ADOM} is stable}\label{exp1}

We first compare {\sf ADOM} with the Distributed Nesterov Method (DNM) of \citet{rogozin2019optimal}. The condition number $\kappa$ is set to $30$, the number of features is $d=40$. To calculate the dual gradient $\g F^*(z)$ we use $T = 3$ steps of AGD in {\sf ADOM} and $T=30$ steps of AGD in DNM.

We switch between 2 networks  every $t$ iterations, where $t \in \{50,20,10,5\}$. We use the following choice of networks:
(i) two random geometric graphs with $\chi \approx 400$; see Figure~\ref{fig:slow2} (top row);  (ii) two networks with ring and star topology with $\chi \approx 1,000$; see Figure~\ref{fig:slow2} (bottom row).

DNM diverges in 7 out of 8  cases presented in Figure~\ref{fig:slow2}, while {\sf ADOM} converges in all cases. However, when DNM converges, it can converge faster than {\sf ADOM}, since its communication complexity has better dependence on $\chi$ ($\sqrt{\chi}$ of DNM vs $\chi$ of {\sf ADOM}).

\subsection{Experiment 2: Comparing {\sf ADOM} with the state of the art: Mudag,  Acc-DNGD and APM}\label{exp2}

We compare {\sf ADOM} with the following algorithms for decentralized optimization over time-varying networks, all equipped with Nesterov acceleration: Mudag \citep{ye2020multi}, Acc-DNGD \citep{qu2019accelerated} and Accelerated Penalty Method (APM) \citep{li2018sharp,rogozin2020towards}.  
We do not compare {\sf ADOM} with PANDA \citep{maros2018panda} and DIGing \citep{nedic2017achieving} because they are not accelerated and have very slow convergence rate both in theory and practice. We use $T=1$ iterations of GD to calculate $\g F^*(z_g^k)$ in {\sf ADOM}.

We generate random datasets with the number of features $d \in \{40,60,80,100\}$. In Figure~\ref{fig:kappa} we fix the network structure parameter $\chi\approx 30$ and perform comparison for condition number $\kappa \in \{10,10^2,10^3,10^4\}$. In Figure~\ref{fig:chi} we fix $\kappa = 100$ and perform comparison for $\chi \in \{9,32,134,521\}$. 

 Overall, {\sf ADOM} is better than the contenders. Acc-DNGD performs worse as the values of $\chi$ and $\kappa$ grow since it has the worst dependence on them. One can also observe that APM suffers from sub-linear convergence, which becomes clear as the number of iterations grows (see bottom row of Figure~\ref{fig:kappa}) since its  communication complexity is proportional to $\log^2 \frac{1}{\epsilon}$.

\bibliography{reference}
\bibliographystyle{icml2021}

\newpage

\onecolumn
\appendix

\part*{Appendix}

\section{Proof of Lemma~\ref{dual:lem:descent}}

\begin{proof}
	We start with $\frac{1}{\mu}$-smoothness of $F^*$:
	\begin{align*}
	F^*(z_f^{k+1}) \leq F^*(z_g^k) + \<\g F^*(z_g^k), z_f^{k+1} - z_g^k> + \frac{1}{2\mu}\sqn{z_f^{k+1} - z_g^k}.
	\end{align*}
	Using line~\ref{dual:line:z3} of Algorithm~\ref{alg:dual} together with \eqref{eq:spectrum} we get
	\begin{align*}
	F^*(z_f^{k+1}) &\leq F^*(z_g^k) - \theta \sqn{\g F^*(z_g^k)}_{\mW(k)} + \frac{\theta^2}{2\mu}\sqn{\g F^*(z_g^k)}_{\mW^2(k)}
	\\&\leq
	F^*(z_g^k) - \frac{\theta\lminp}{2}\sqn{\g F^*(z_g^k)}_{\mP} - \frac{\theta}{2}\sqn{\g F^*(z_g^k)}_{\mW(k)} + \frac{\theta^2\lmax}{2\mu}\sqn{\g F^*(z_g^k)}_{\mW(k)}
	\\&=
	F^*(z_g^k) - \frac{\theta\lminp}{2}\sqn{\g F^*(z_g^k)}_{\mP} +\frac{\theta}{2}\left(\frac{\theta\lmax}{\mu}- 1\right)\sqn{\g F^*(z_g^k)}_{\mW(k)}.
	\end{align*}
	Using condition $\theta \leq \frac{\mu}{\lmax}$ we get
	\begin{align*}
	F^*(z_f^{k+1})
	&\leq
	F^*(z_g^k) - \frac{\theta\lminp}{2}\sqn{\g F^*(z_g^k)}_{\mP}.
	\end{align*}
\end{proof}

\section{Proof of Lemma~\ref{dual:lem:error}}

\begin{proof}
	Using \eqref{eq:PP} and \eqref{eq:PW} together with lines~\ref{dual:line:delta} and~\ref{dual:line:m} of Algorithm~\ref{alg:dual} we obtain
	\begin{align*}
		\sqn{m^{k+1}}_\mP
		&=
		\sqn{m^{k} - \eta \g F^*(z_g^k) - \Delta^k}_\mP
		\\&=
		\sqn{(\mP -\sigma\mW(k))(m^{k} - \eta \g F^*(z_g^k))}
		\\&=
		\sqn{m^{k} - \eta \g F^*(z_g^k)}_\mP
		-
		2\sigma\sqn{m^{k} - \eta \g F^*(z_g^k)}_{\mW(k)}
		+
		\sigma^2\sqn{m^{k} - \eta \g F^*(z_g^k)}_{\mW^2(k)}.
	\end{align*}
	Using \eqref{eq:spectrum} we obtain
	\begin{align*}
	\sqn{m^{k+1}}_\mP
	&\leq
	\sqn{m^{k} - \eta \g F^*(z_g^k)}_\mP
	-
	\sigma\lminp\sqn{m^{k} - \eta \g F^*(z_g^k)}_{\mP}
	\\&-
	\sigma\sqn{m^{k} - \eta \g F^*(z_g^k)}_{\mW(k)}
	+
	\sigma^2\lmax\sqn{m^{k} - \eta \g F^*(z_g^k)}_{\mW(k)}
	\\&=
	\sqn{m^{k} - \eta \g F^*(z_g^k)}_\mP
	-
	\sigma\lminp\sqn{m^{k} - \eta \g F^*(z_g^k)}_{\mP}
	\\&+
	\sigma(\sigma\lmax - 1)\sqn{m^{k} - \eta \g F^*(z_g^k)}_{\mW(k)}
	\end{align*}
	Using condition $\sigma \leq\frac{1}{\lmax}$ we get
	\begin{align*}
	\sqn{m^{k+1}}_\mP
	&\leq
	(1-\sigma\lminp)\sqn{m^{k} - \eta \g F^*(z_g^k)}_\mP.
	\end{align*}
	Using Young's inequality we get
	\begin{align*}
	\sqn{m^{k+1}}_\mP
	&\leq
	(1-\sigma\lminp)\left(
		\left(1 + \frac{\sigma\lminp}{2(1-\sigma\lminp)}\right)\sqn{m^{k}}_\mP
		+
			\left(1 + \frac{2(1-\sigma\lminp)}{\sigma\lminp}\right)\sqn{\eta \g F^*(z_g^k)}_\mP
	\right)
	\\&=
	\left(1 - \frac{\sigma\lminp}{2}\right)\sqn{m^k}_{\mP}
	+
	\eta^2\frac{(1-\sigma\lminp)(2-\sigma\lminp)}{\sigma\lminp}\sqn{\g F^*(z_g^k)}_\mP
	\\&\leq
	\left(1 - \frac{\sigma\lminp}{2}\right)\sqn{m^k}_{\mP}
	+
	\frac{2\eta^2}{\sigma\lminp}\sqn{\g F^*(z_g^k)}_\mP.
	\end{align*}
	Rearranging concludes the proof.
\end{proof}

\section{New Lemma}

\begin{lemma}\label{dual:lem:main}
	Let
	\begin{equation}\label{dual:alpha}
	\alpha = \frac{1}{2L},
	\end{equation}
	\begin{equation}
	\eta = \frac{2\lminp\sqrt{\mu L}}{7\lmax},\label{dual:eta}
	\end{equation}
	\begin{equation}
		\theta = \frac{\mu}{\lmax},\label{dual:theta}
	\end{equation}
	\begin{equation}
	\sigma = \frac{1}{\lmax},\label{dual:sigma}
	\end{equation}
	\begin{equation}
	\tau = \frac{\lminp}{7\lmax}\sqrt{\frac{\mu}{L}}.\label{dual:tau}
	\end{equation}
Define the Lyapunov function
	\begin{equation}\label{dual:psi}
		\Psi^k \eqdef \sqn{\hat{z}^k - z^*} + \frac{2\eta(1-\eta\alpha)}{\tau}(F^*(z_f^k) - F^*(z^*) )+6\sqn{m^k}_{\mP},
	\end{equation}
	where $\hat{z}^k$ is defined by
	\begin{equation}\label{dual:zhat}
		\hat{z}^k = z^k + \mP m^k.
	\end{equation}
	Then the following inequality holds:
	\begin{equation}\label{dual:eq:recurrence}
		\Psi^{k+1} \leq \left(1-\frac{\lminp}{7\lmax}\sqrt{\frac{\mu}{L}}\right)\Psi^k.
	\end{equation}
\end{lemma}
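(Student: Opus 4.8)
The plan is to prove the recurrence \eqref{dual:eq:recurrence} by tracking the three pieces of $\Psi^k$ separately and showing that each contracts (or has its surplus absorbed) at the common rate $1-\tau$. The cornerstone is that the ghost iterate obeys a \emph{clean} PNGD-type update: using $\mW(k)=\mP\mW(k)$ from \eqref{eq:PW} one checks that $\mP\Delta^k=\Delta^k$, so that
\[
\hat z^{k+1}=\hat z^k+\eta\alpha(z_g^k-z^k)-\eta\mP\g F^*(z_g^k),
\]
which is exactly line~\ref{nesterov:line:z2} of PNGD, except that the actual iterate $z^k$ (not $\hat z^k$) appears in the momentum term and in $z_g^k$. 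I would also record the arithmetic identity $\eta\alpha=\tau$ following from \eqref{dual:alpha} and \eqref{dual:eta}; this makes $\tau$ the contraction rate and rewrites the coefficient in \eqref{dual:psi} as $\tfrac{2\eta(1-\eta\alpha)}{\tau}=\tfrac{2\eta(1-\tau)}{\tau}$.

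First I would expand $\sqn{\hat z^{k+1}-z^*}$ from the ghost update. Writing $\hat z^k-z^*=(z^k-z^*)+\mP m^k$ and applying the polarization identity to the momentum cross term, the contribution $-\eta\alpha\sqn{\hat z^k-z^*}=-\tau\sqn{\hat z^k-z^*}$ produced by $\eta\alpha=\tau$ supplies the contraction of the distance term. Strong convexity of $F^*$ (constant $1/L$) at $z_g^k$ versus $z^*$ converts $-2\eta\<z_g^k-z^*,\mP\g F^*(z_g^k)>$ into $-2\eta(F^*(z_g^k)-F^*(z^*))$ plus a negative $\sqn{z_g^k-z^*}$ term, while convexity applied through the coupling identity $z_g^k-z^k=(1-\tau)(z_f^k-z^k)$ turns the residual inner product into $\tfrac{2\eta(1-\tau)^2}{\tau}(F^*(z_f^k)-F^*(z_g^k))$. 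The key bookkeeping check here is that the coefficient of $F^*(z_f^k)-F^*(z^*)$ comes out to be exactly $(1-\tau)\cdot\tfrac{2\eta(1-\tau)}{\tau}$, matching the target, and that the leftover quadratics $\sqn{z_g^k-z^*}$ and $\sqn{z_g^k-z^k}$ carry nonpositive coefficients once $\alpha=\tfrac{1}{2L}$ is used (so that $\tau-\eta/L<0$ and $\eta\alpha(\eta\alpha-1)<0$).

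Next I would feed in the two lemmas. Scaling Lemma~\ref{dual:lem:descent} by $\tfrac{2\eta(1-\tau)}{\tau}$ bounds the function-gap term of $\Psi^{k+1}$, cancels the $F^*(z_g^k)-F^*(z^*)$ contributions (their total coefficient works out to $-2\eta\tau\le0$), and supplies a crucial negative multiple of $\sqn{\g F^*(z_g^k)}_\mP$. Lemma~\ref{dual:lem:error} bounds $6\sqn{m^{k+1}}_\mP$ by $6(1-\tfrac{\sigma\lminp}{2})\sqn{m^k}_\mP$ plus a positive multiple of $\sqn{\g F^*(z_g^k)}_\mP$. At this point everything is expressed through $\sqn{\hat z^k-z^*}$, $F^*(z_f^k)-F^*(z^*)$, $\sqn{m^k}_\mP$, the gradient norm $\sqn{\g F^*(z_g^k)}_\mP$, the nonpositive quadratics, and the error-coupling terms $-2\eta\<m^k,\g F^*(z_g^k)>_\mP$, $2\tau\<z_g^k-z^*,\mP m^k>$ and $\tau\sqn{m^k}_\mP$ generated by replacing $z^k$ by $\hat z^k-\mP m^k$.

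The main obstacle, and the real content of the lemma, is to balance these error-coupling terms using the prescribed constants. I would apply Young's inequality to each: split $-2\eta\<m^k,\g F^*(z_g^k)>_\mP$ into an $\sqn{\g F^*(z_g^k)}_\mP$ part and an $\sqn{m^k}_\mP$ part, and split $2\tau\<z_g^k-z^*,\mP m^k>$ into an $\sqn{z_g^k-z^*}$ part (absorbed by the negative slack $\tau-\eta/L$) and an $\sqn{m^k}_\mP$ part. The delicate point is that the free Young parameters must be chosen so that \emph{simultaneously} (i) the total coefficient of $\sqn{\g F^*(z_g^k)}_\mP$ is nonpositive, i.e.\ the descent term $\tfrac{2\eta(1-\tau)}{\tau}\cdot\tfrac{\theta\lminp}{2}$ dominates $\eta^2$, the Young remainder, and the $\tfrac{8\eta^2}{(\sigma\lminp)^2}$-type contribution of Lemma~\ref{dual:lem:error}; and (ii) the total coefficient of $\sqn{m^k}_\mP$ is at most $6(1-\tau)$, i.e.\ the gap between $6(1-\tfrac{\sigma\lminp}{2})$ and $6(1-\tau)$ absorbs all the $\sqn{m^k}_\mP$ remainders. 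With $\theta=\tfrac{\mu}{\lmax}$, $\sigma=\tfrac{1}{\lmax}$, $\eta=\tfrac{2\lminp\sqrt{\mu L}}{7\lmax}$ and $\tau=\tfrac{\lminp}{7\lmax}\sqrt{\mu/L}$, these two conditions reduce to explicit numerical comparisons (the factor $7$ and the constant $6$ being tuned precisely for this), and verifying them is the crux. Assembling all contractions then yields $\Psi^{k+1}\le(1-\tau)\Psi^k$, which is \eqref{dual:eq:recurrence}.
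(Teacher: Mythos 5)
Your proposal follows essentially the same route as the paper's proof: the ghost-iterate update $\hat z^{k+1}=\hat z^k+\eta\alpha(z_g^k-z^k)-\eta\mP\g F^*(z_g^k)$ obtained from $\mP\Delta^k=\Delta^k$, the expansion of $\sqn{\hat z^{k+1}-z^*}$ using convexity and $\frac{1}{L}$-strong convexity of $F^*$ together with the coupling $z_g^k=\tau z^k+(1-\tau)z_f^k$, Lemma~\ref{dual:lem:descent} scaled by $\frac{2\eta(1-\eta\alpha)}{\tau}$ to produce the $F^*(z_f^{k+1})$ term and a negative multiple of $\sqn{\g F^*(z_g^k)}_\mP$, Lemma~\ref{dual:lem:error} scaled so that the momentum coefficient becomes $6$, and Young's inequality on $-2\eta\<\mP\g F^*(z_g^k),m^k>$. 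Your bookkeeping variants are all consistent with the paper's: polarization instead of the paper's Jensen step, keeping the cross term $2\tau\<z_g^k-z^*,\mP m^k>$ explicit rather than absorbing it immediately, dropping $-2\eta\tau(F^*(z_g^k)-F^*(z^*))$ instead of invoking $F^*(z_g^k)\ge F^*(z^*)$, and the identity $\eta\alpha=\tau$, which the paper uses only implicitly through $1-\frac{\eta}{2L}=1-\tau$.

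The one real shortfall is that you stop at what you yourself call the crux: the two coefficient conditions are stated but never verified, and since the lemma's entire content is that the specific constants $\alpha,\eta,\theta,\sigma,\tau$ make those inequalities true, the proposal as written is an outline rather than a proof. For the record, the checks do go through with room to spare, exactly as the paper executes them: the available descent coefficient is $\frac{(1-\eta\alpha)\eta\mu\lminp}{\tau\lmax}\ge\frac{3\eta\mu\lminp}{4\tau\lmax}=\frac{3\mu L\lminp}{2\lmax}$ (using $\frac{\eta}{\tau}=2L$ and $\eta\alpha\le\frac{1}{4}$), while the gradient-norm costs total at most $\eta^2+\frac{\eta^2\lmax}{\lminp}+\frac{12\eta^2\lmax}{\lminp}\le\frac{14\eta^2\lmax}{\lminp}=\frac{8\mu L\lminp}{7\lmax}$, and $\frac{8}{7}\le\frac{3}{2}$; on the momentum side, the Young remainders $\frac{\lminp}{\lmax}+2\eta\alpha\le\frac{3\lminp}{2\lmax}$ are precisely what Lemma~\ref{dual:lem:error} (with $\sigma=\frac{1}{\lmax}$, so $\frac{3\lminp}{2\lmax}\cdot\frac{4\lmax}{\lminp}=6$) converts into the telescoping pair $6\sqn{m^k}_\mP$, $6\sqn{m^{k+1}}_\mP$, and the resulting contraction factor $1-\frac{\lminp}{4\lmax}$ beats the target $1-\tau$ because $\tau\le\frac{\lminp}{7\lmax}<\frac{\lminp}{4\lmax}$.
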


\begin{proof}
%	One can observe, that
%	Since $\range \mW(k) = \cL^\perp$, one can observe that $\mP\mW(k) = \mW(k)$ and $\mP\Delta^k = \Delta^k$. Hence,
%	from lines~\ref{dual:line:m} and~\ref{dual:line:z2} of Algorithm~\ref{alg:dual} it follows that
%	\begin{align*}
%		\hat{z}^{k+1} = \hat{z}^k + \eta\alpha(z_g^k - z^k) - \eta\mP\g F^*(z_g^k).
%	\end{align*}
	Using \eqref{dual:zhat} together with lines~\ref{dual:line:m} and~\ref{dual:line:z2} of Algorithm~\ref{alg:dual}, we get
	\begin{align*}
	\hat{z}^{k+1} &= z^{k+1} + \mP n^{k+1}\\
	&= z^k + \eta\alpha(z_g^k - z^k) + \Delta^k + \mP( m^k - \eta\g F^*(z_g^k) - \Delta^k)
	\\&=
	z^k + \mP m^k + \eta\alpha(z_g^k - z^k) - \eta\mP \g F^*(z_g^k) + \Delta^k - \mP\Delta^k.
	\end{align*}
	From line~\ref{dual:line:delta} of Algorithm~\ref{alg:dual} and \eqref{eq:PW} it follows that $\mP\Delta^k = \Delta^k$, which implies
	\begin{align*}
	\hat{z}^{k+1} &= 
	z^k + \mP m^k + \eta\alpha(z_g^k - z^k) - \eta\mP \g F^*(z_g^k)
	\\&=
	\hat{z}^k + \eta\alpha(z_g^k - z^k) - \eta\mP \g F^*(z_g^k).
	\end{align*}
	Hence,
	\begin{align*}
		\sqn{\hat{z}^{k+1} - z^*}
		&=
		\sqn{\hat{z}^k - z^* + \eta\alpha(z_g^k - z^k) - \eta \mP \g F^*(z_g^k)}
		\\&=
		\sqn{(1 - \eta\alpha)(\hat{z}^k - z^* )+ \eta\alpha(z_g^k + \mP m^k - z^*)}
		+
		\eta^2\sqn{\g F^*(z_g^k)}_\mP
		\\&-
		2\eta\<\mP\g F^*(z_g^k), z^k  + \mP m^k- z^* + \eta\alpha(z_g^k - z^k)>
		\\&\leq
		(1-\eta\alpha)\sqn{\hat{z}^k - z^*} + \eta\alpha\sqn{z_g^k + \mP m^k - z^*}
		+
		\eta^2\sqn{\g F^*(z_g^k)}_\mP
		\\&-
		2\eta\<\g F^*(z_g^k),\mP(z_g^k - z^*)>
		+
		2\eta(1-\eta\alpha)\<\g F^*(z_g^k), \mP(z_g^k - z^k)>
		-
		2\eta\<\mP F^*(z_g^k),m^k>
		\\&\leq
		(1-\eta\alpha)\sqn{\hat{z}^k - z^*} + 2\eta\alpha\sqn{z_g^k - z^*}
		+
		2\eta\alpha\sqn{m^k}_\mP
		+
		\eta^2\sqn{\g F^*(z_g^k)}_\mP
		\\&-
		2\eta\<\g F^*(z_g^k),\mP(z_g^k - z^*)>
		+
		2\eta(1-\eta\alpha)\<\g F^*(z_g^k), \mP(z_g^k - z^k)>
		-
		2\eta\<\mP F^*(z_g^k),m^k>
	\end{align*}
	One can observe, that $z^k,z_g^k,z^* \in \cL^\perp$. Hence,
	\begin{align*}
	\sqn{\hat{z}^{k+1} - z^*}
	&\leq
	(1-\eta\alpha)\sqn{\hat{z}^k - z^*} + 2\eta\alpha\sqn{z_g^k - z^*}
	+
	2\eta\alpha\sqn{m^k}_\mP
	+
	\eta^2\sqn{\g F^*(z_g^k)}_\mP
	\\&-
	2\eta\<\g F^*(z_g^k),z_g^k - z^*>
	+
	2\eta(1-\eta\alpha)\<\g F^*(z_g^k), z_g^k - z^k>
	-
	2\eta\<\mP F^*(z_g^k),m^k>.
	\end{align*}
	Using line~\ref{dual:line:z1} of Algorithm~\ref{alg:dual} we get
	\begin{align*}
	\sqn{\hat{z}^{k+1} - z^*}
	&\leq
	(1-\eta\alpha)\sqn{\hat{z}^k - z^*} + 2\eta\alpha\sqn{z_g^k - z^*}
	+
	2\eta\alpha\sqn{m^k}_\mP
	+
	\eta^2\sqn{\g F^*(z_g^k)}_\mP
	\\&-
	2\eta\<\g F^*(z_g^k),z_g^k - z^*>
	+
	2\eta(1-\eta\alpha)\frac{(1-\tau)}{\tau}\<\g F^*(z_g^k), z_f^k - z_g^k>
	-
	2\eta\<\mP F^*(z_g^k),m^k>.
	\end{align*}
	Using convexity and $\frac{1}{L}$-strong convexity of $F^*(z)$ we get
	\begin{align*}
	\sqn{\hat{z}^{k+1} - z^*}
	&\leq
	(1-\eta\alpha)\sqn{\hat{z}^k - z^*} + 2\eta\alpha\sqn{z_g^k - z^*}
	+
	2\eta\alpha\sqn{m^k}_\mP
	+
	\eta^2\sqn{\g F^*(z_g^k)}_\mP
	\\&-
	2\eta(F^*(z_g^k) - F^*(z^*)) - \frac{\eta}{L}\sqn{z_g^k - z^*}
	+
	2\eta(1-\eta\alpha)\frac{(1-\tau)}{\tau}(F^*(z_f^k) - F^*(z_g^k))
	-
	2\eta\<\mP F^*(z_g^k),m^k>
	\\&=
	(1-\eta\alpha)\sqn{\hat{z}^k - z^*}
	+
	\left(2\eta\alpha - \frac{\eta}{L}\right)\sqn{z_g^k - z^*}
	+
	\eta^2\sqn{\g F^*(z_g^k)}_\mP
	\\&-
	2\eta(F^*(z_g^k) - F^*(z^*))
	+
	2\eta(1-\eta\alpha)\frac{(1-\tau)}{\tau}(F^*(z_f^k) - F^*(z_g^k))
	-
	2\eta\<\mP F^*(z_g^k),m^k>
	+
	2\eta\alpha\sqn{m^k}_\mP.
	\end{align*}
	Using $\alpha$ defined by \eqref{dual:alpha} we get
	\begin{align*}
	\sqn{\hat{z}^{k+1} - z^*}
	&\leq
	\left(1-\frac{\eta}{2L}\right)\sqn{\hat{z}^k - z^*}
	+
	\eta^2\sqn{\g F^*(z_g^k)}_\mP
	\\&-
	2\eta(F^*(z_g^k) - F^*(z^*))
	+
	2\eta(1-\eta\alpha)\frac{(1-\tau)}{\tau}(F^*(z_f^k) - F^*(z_g^k))
	-
	2\eta\<\mP F^*(z_g^k),m^k>
	+
	2\eta\alpha\sqn{m^k}_\mP.
	\end{align*}
	Since $F^*(z_g^k) \geq F^*(z^*)$, we get
	\begin{align*}
	\sqn{\hat{z}^{k+1} - z^*}
	&\leq
	\left(1-\frac{\eta}{2L}\right)\sqn{\hat{z}^k - z^*}
	+
	\eta^2\sqn{\g F^*(z_g^k)}_\mP
	\\&-
	2\eta(1-\eta\alpha)(F^*(z_g^k) - F^*(z^*))
	+
	2\eta(1-\eta\alpha)\frac{(1-\tau)}{\tau}(F^*(z_f^k) - F^*(z_g^k))
	\\&-
	2\eta\<\mP F^*(z_g^k),m^k>
	+
	2\eta\alpha\sqn{m^k}_\mP
	\\&=
	\left(1-\frac{\eta}{2L}\right)\sqn{\hat{z}^k - z^*}
	+
	\eta^2\sqn{\g F^*(z_g^k)}_\mP
	\\&+
	2\eta(1-\eta\alpha)\left(
	\frac{(1-\tau)}{\tau}F^*(z_f^k) + F^*(z^*) - \frac{1}{\tau}F^*(z_g^k)
	\right)
	\\&-
	2\eta\<\mP F^*(z_g^k),m^k>
	+
	2\eta\alpha\sqn{m^k}_\mP.
	\end{align*}
	Using \eqref{dual:eq:1} and $\theta$ defined by \eqref{dual:theta} we get
	\begin{align*}
	\sqn{\hat{z}^{k+1} - z^*}
	&\leq
	\left(1-\frac{\eta}{2L}\right)\sqn{\hat{z}^k - z^*}
	+
	\left(\eta^2-\frac{(1-\eta\alpha)\eta\mu\lminp}{\tau\lmax}\right)\sqn{\g F^*(z_g^k)}_\mP
	\\&+
	(1-\tau)\frac{2\eta(1-\eta\alpha)}{\tau}(F^*(z_f^k) - F^*(z^*) )
	-
	\frac{2\eta(1-\eta\alpha)}{\tau}(F^*(z_f^{k+1}) - F^*(z^*) )
	\\&-
	2\eta\<\mP F^*(z_g^k),m^k>
	+
	2\eta\alpha\sqn{m^k}_\mP.
	\end{align*}
	Using Young's inequality we get
	\begin{align*}
	\sqn{\hat{z}^{k+1} - z^*}
	&\leq
	\left(1-\frac{\eta}{2L}\right)\sqn{\hat{z}^k - z^*}
	+
	\left(\eta^2-\frac{(1-\eta\alpha)\eta\mu\lminp}{\tau\lmax}\right)\sqn{\g F^*(z_g^k)}_\mP
	\\&+
	(1-\tau)\frac{2\eta(1-\eta\alpha)}{\tau}(F^*(z_f^k) - F^*(z^*) )
	-
	\frac{2\eta(1-\eta\alpha)}{\tau}(F^*(z_f^{k+1}) - F^*(z^*) )
	\\&+
	\frac{\eta^2\lmax}{\lminp}\sqn{\g F^*(z_g^k)}_\mP + \frac{\lminp}{\lmax}\sqn{m^k}_\mP
	+
	2\eta\alpha\sqn{m^k}_\mP
	\\&=
	\left(1-\frac{\eta}{2L}\right)\sqn{\hat{z}^k - z^*}
	+
	\left(\eta^2+\frac{\eta^2\lmax}{\lminp}-\frac{(1-\eta\alpha)\eta\mu\lminp}{\tau\lmax}\right)\sqn{\g F^*(z_g^k)}_\mP
	\\&+
	(1-\tau)\frac{2\eta(1-\eta\alpha)}{\tau}(F^*(z_f^k) - F^*(z^*) )
	-
	\frac{2\eta(1-\eta\alpha)}{\tau}(F^*(z_f^{k+1}) - F^*(z^*) )
	\\&+
	\left(\frac{\lminp}{\lmax}+2\eta\alpha\right)\sqn{m^k}_\mP.
	\end{align*}
	Using \eqref{dual:eta} and \eqref{dual:alpha}, that imply $\eta\alpha\leq \frac{\lminp}{4\lmax}$, we obtain
	\begin{align*}
	\sqn{\hat{z}^{k+1} - z^*}
	&\leq
	\left(1-\frac{\eta}{2L}\right)\sqn{\hat{z}^k - z^*}
	+
	\left(\eta^2+\frac{\eta^2\lmax}{\lminp}-\frac{3\eta\mu\lminp}{4\tau\lmax}\right)\sqn{\g F^*(z_g^k)}_\mP
	\\&+
	(1-\tau)\frac{2\eta(1-\eta\alpha)}{\tau}(F^*(z_f^k) - F^*(z^*) )
	-
	\frac{2\eta(1-\eta\alpha)}{\tau}(F^*(z_f^{k+1}) - F^*(z^*) )
	+
	\frac{3\lminp}{2\lmax}\sqn{m^k}_\mP.
	\end{align*}
	Using \eqref{dual:eq:2} and $\sigma$ defined by \eqref{dual:sigma} we get
	\begin{align*}
	\sqn{\hat{z}^{k+1} - z^*}
	&\leq
	\left(1-\frac{\eta}{2L}\right)\sqn{\hat{z}^k - z^*}
	+
	\left(\eta^2+\frac{\eta^2\lmax}{\lminp}-\frac{3\eta\mu\lminp}{4\tau\lmax}\right)\sqn{\g F^*(z_g^k)}_\mP
	\\&+
	(1-\tau)\frac{2\eta(1-\eta\alpha)}{\tau}(F^*(z_f^k) - F^*(z^*) )
	-
	\frac{2\eta(1-\eta\alpha)}{\tau}(F^*(z_f^{k+1}) - F^*(z^*) )
	\\&+
	\left(1 - \frac{\lminp}{4\lmax}\right)6\sqn{m^k}_{\mP}
	-
	6\sqn{m^{k+1}}_\mP
	+
	\frac{12\eta^2\lmax}{\lminp}\sqn{\g F^*(z_g^k)}_\mP
	\\&\leq
	\left(1-\frac{\eta}{2L}\right)\sqn{\hat{z}^k - z^*}
	+
	\left(\frac{14\eta^2\lmax}{\lminp}-\frac{3\eta\mu\lminp}{4\tau\lmax}\right)\sqn{\g F^*(z_g^k)}_\mP
	\\&+
	(1-\tau)\frac{2\eta(1-\eta\alpha)}{\tau}(F^*(z_f^k) - F^*(z^*) )
	-
	\frac{2\eta(1-\eta\alpha)}{\tau}(F^*(z_f^{k+1}) - F^*(z^*) )
	\\&+
	\left(1 - \frac{\lminp}{4\lmax}\right)6\sqn{m^k}_{\mP}
	-
	6\sqn{m^{k+1}}_\mP.
	\end{align*}
	Using $\eta$ defined by \eqref{dual:eta} and $\tau$ defined by \eqref{dual:tau} we get
	\begin{align*}
	\sqn{\hat{z}^{k+1} - z^*}
	&\leq
	\left(1-\frac{\lminp}{7\lmax}\sqrt{\frac{\mu}{L}}\right)\sqn{\hat{z}^k - z^*}
	+
	\left(1 - \frac{\lminp}{4\lmax}\right)6\sqn{m^k}_{\mP}
	-
	6\sqn{m^{k+1}}_\mP
	\\&+
	\left(1-\frac{\lminp}{7\lmax}\sqrt{\frac{\mu}{L}}\right)\frac{2\eta(1-\eta\alpha)}{\tau}(F^*(z_f^k) - F^*(z^*) )
	-
	\frac{2\eta(1-\eta\alpha)}{\tau}(F^*(z_f^{k+1}) - F^*(z^*) )
	\\&\leq
	\left(1-\frac{\lminp}{7\lmax}\sqrt{\frac{\mu}{L}}\right)
	\left(\sqn{\hat{z}^k - z^*} + \frac{2\eta(1-\eta\alpha)}{\tau}(F^*(z_f^k) - F^*(z^*) )+6\sqn{m^k}_{\mP}\right)
	\\&-
	\frac{2\eta(1-\eta\alpha)}{\tau}(F^*(z_f^{k+1}) - F^*(z^*) )
	-
	6\sqn{m^{k+1}}_\mP.
	\end{align*}
	Rearranging and using \eqref{dual:psi} concludes the proof.
\end{proof}

\section{Proof of Theorem~\ref{thm:main}}
\begin{proof}
	Using $\frac{1}{\mu}$-smoothness of $F^*$ and the fact that $\g F^*(z^*) = x^*$, we get:
	\begin{align*}
		\sqn{\g F^*(z_g^k) - x^*}
		&=
		\sqn{\g F^*(z_g^k) - \g F^*(z^*)}
		\leq
		\frac{1}{\mu^2}\sqn{z_g^k - z^*}.
	\end{align*}
	Using line~\ref{dual:line:z1} of Algorithm~\ref{alg:dual} we get
	\begin{align*}
		\sqn{\g F^*(z_g^k) - x^*}
		&\leq\frac{\tau}{\mu^2}\sqn{z^k - z^*} + \frac{(1-\tau)}{\mu^2}\sqn{z_f^k - z^*}.
	\end{align*}
	Using $\frac{1}{L}$-strong convexity of $F^*$ we get
	\begin{align*}
	\sqn{\g F^*(z_g^k) - x^*}
	&\leq\frac{\tau}{\mu^2}\sqn{z^k - z^*} + \frac{2(1-\tau)L}{\mu^2}(F^*(z_f^k) - F^*(z^*)).
	\end{align*}
	Using \eqref{dual:zhat} we get
	\begin{align*}
			\sqn{\g F^*(z_g^k) - x^*}
			&\leq
			\frac{2\tau}{\mu^2}\sqn{\hat{z}^k - z^*}
			+
			\frac{2\tau}{\mu^2}\sqn{m^k}_\mP
			+
			\frac{2(1-\tau)L}{\mu^2}(F^*(z_f^k) - F^*(z^*))
			\\&=
			\frac{2\tau}{\mu^2}\sqn{\hat{z}^k - z^*}
			+
			\frac{\tau(1-\tau)L}{\eta(1-\eta\alpha)\mu^2}\frac{2\eta(1-\eta\alpha)}{\tau}(F^*(z_f^k) - F^*(z^*))
			+
			\frac{\tau}{3\mu^2}6\sqn{m^k}_\mP.
			\\&\leq
			\max\left\{\frac{2\tau}{\mu^2},\frac{\tau(1-\tau)L}{\eta(1-\eta\alpha)\mu^2}, \frac{\tau}{3\mu^2}\right\}\left(\sqn{\hat{z}^k - z^*} + \frac{2\eta(1-\eta\alpha)}{\tau}(F^*(z_f^k) - F^*(z^*) )+6\sqn{m^k}_{\mP}\right)
			\\&=
			\max\left\{ \frac{2\tau}{  \mu^2},\frac{\tau(1-\tau)L}{\eta(1-\eta\alpha)\mu^2}\right\}\left(\sqn{\hat{z}^k - z^*} + \frac{2\eta(1-\eta\alpha)}{\tau}(F^*(z_f^k) - F^*(z^*) )+6\sqn{m^k}_{\mP}\right).
	\end{align*}
	Using the definition of $\Psi^k$ \eqref{dual:psi} and denoting $C = \Psi^0	\max\left\{ \frac{2\tau}{\mu^2},\frac{\tau(1-\tau)L}{\eta(1-\eta\alpha)\mu^2}\right\}$ we get
	\begin{align*}
		\sqn{\g F^*(z_g^k) - x^*} \leq \frac{C}{\Psi^0}\Psi^k.
	\end{align*}
	One can observe, that conditions of Lemma~\ref{dual:lem:main} are satisfied. Hence, inequality \eqref{dual:eq:recurrence} holds, which implies
	\begin{eqnarray*}
	\sqn{\g F^*(z_g^k) - x^*} &\leq & \frac{C}{\Psi^0} \left(1-\frac{\lminp}{7\lmax}\sqrt{\frac{\mu}{L}}\right)\Psi^{k-1}
	\\&\leq & \frac{C}{\Psi^0}\left(1-\frac{\lminp}{7\lmax}\sqrt{\frac{\mu}{L}}\right)^2\Psi^{k-2}
	\\&  \vdots &
	\\&\leq & \frac{C}{\Psi^0}\left(1-\frac{\lminp}{7\lmax}\sqrt{\frac{\mu}{L}}\right)^k\Psi^{0}
	\\&=&
	C\left(1-\frac{\lminp}{7\lmax}\sqrt{\frac{\mu}{L}}\right)^k,
	\end{eqnarray*}
	which concludes the proof.
\end{proof}

\section{Additional Experiments}

\subsection{Real data}\label{real_data}

In this section, we perform experiments for the same problem~\eqref{eq:log_reg} and network setup as in Section~\ref{experiments} (and~\ref{exp2}), but with LIBSVM\footnote{The LIBSVM~\citep{chang2011libsvm} dataset collection is available at \href{https://www.csie.ntu.edu.tw/~cjlin/libsvmtools/datasets/}{https://www.csie.ntu.edu.tw/~cjlin/libsvmtools/datasets/}} datasets: \emph{a6a, w6a, ijcnn1} instead of the synthetic ones (see Table~\ref{tbl:87gf9dg8d}). In Figure~\ref{fig:real_kappa}, the network structure parameter is fixed ($\chi\approx 30$), and condition number $\kappa \in \{10,10^2,10^3,10^4\}$ changes. In Figure~\ref{fig:real_chi}, we fix $\kappa = 100$ and perform comparison for $\chi \in \{9, 32, 134, 521\}$. 
%Convergence is measured by the quantity $\frac{\sqn{\g F^*(z_g^k) - x^*}}{\sqn{\g F^*(z_g^0) - x^*}}$.

\begin{table}[H]
\centering
\begin{tabular}{|c|c|c|}
\hline
dataset & samples & dimension \\ \hline
a6a     & 11220   & 122       \\ \hline
w6a     & 17188   & 300       \\ \hline
ijcnn1  & 49990   & 22        \\ \hline
\end{tabular}

\caption{Details of the datasets}
\label{tbl:87gf9dg8d}
\end{table}

To summarize the obtained results, {\sf ADOM} outperforms all other methods for every set of parameters. This becomes even more evident on real data. One can also observe that for some cases, Acc-DNGD almost does not converge. Apart from that, competing methods (such as APM and Mudag) often show divergence during the first iterations, while {\sf ADOM} consistently demonstrates significant progress during the initial phase. Besides, it is enough to use one iteration ($T=1$) of GD to calculate $\g F^*(z_g^k)$ in {\sf ADOM} to ensure liner convergence.

\begin{figure*}[!htb]
	\centering
	\includegraphics[width=0.24\linewidth]{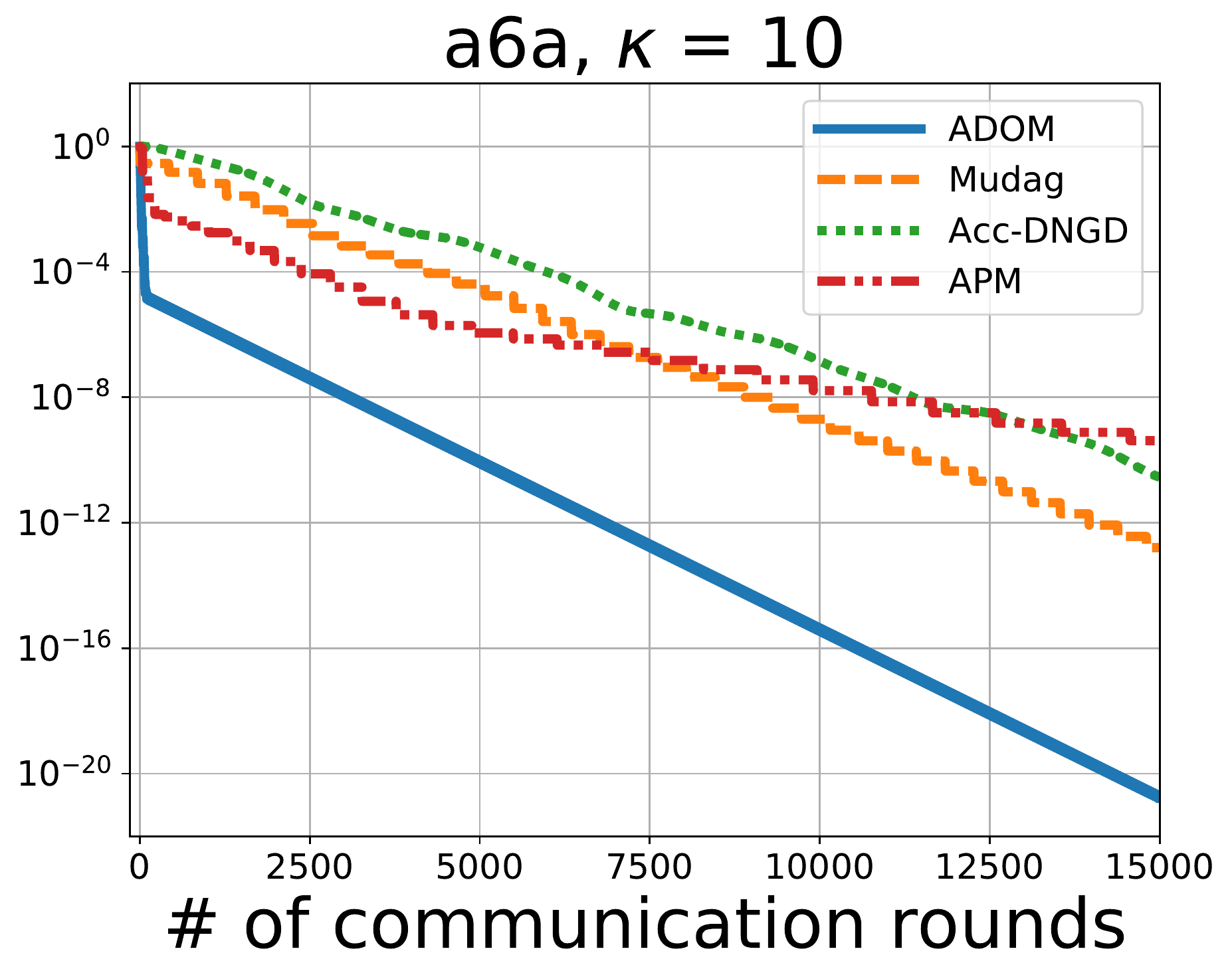}
	\includegraphics[width=0.24\linewidth]{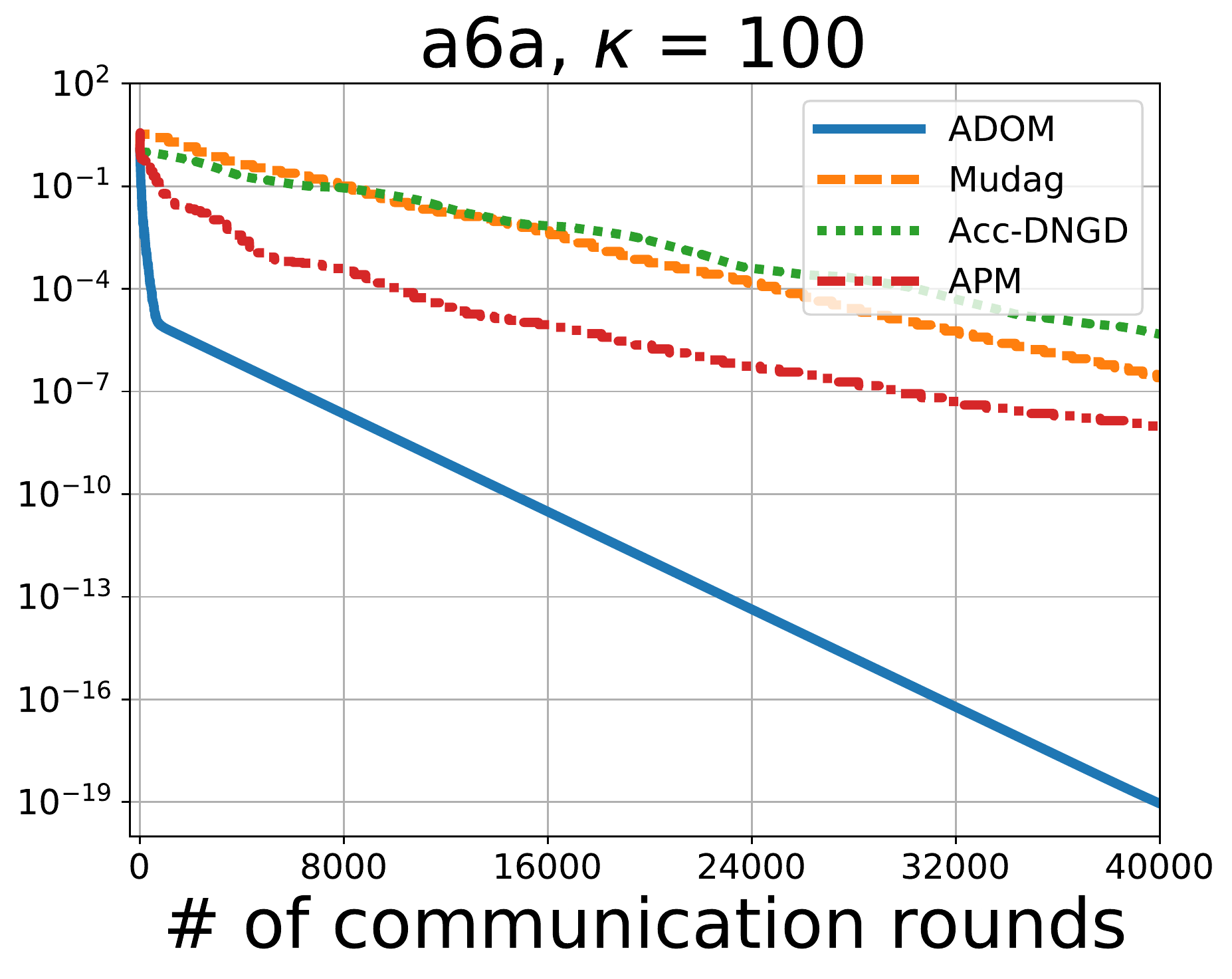}
	\includegraphics[width=0.24\linewidth]{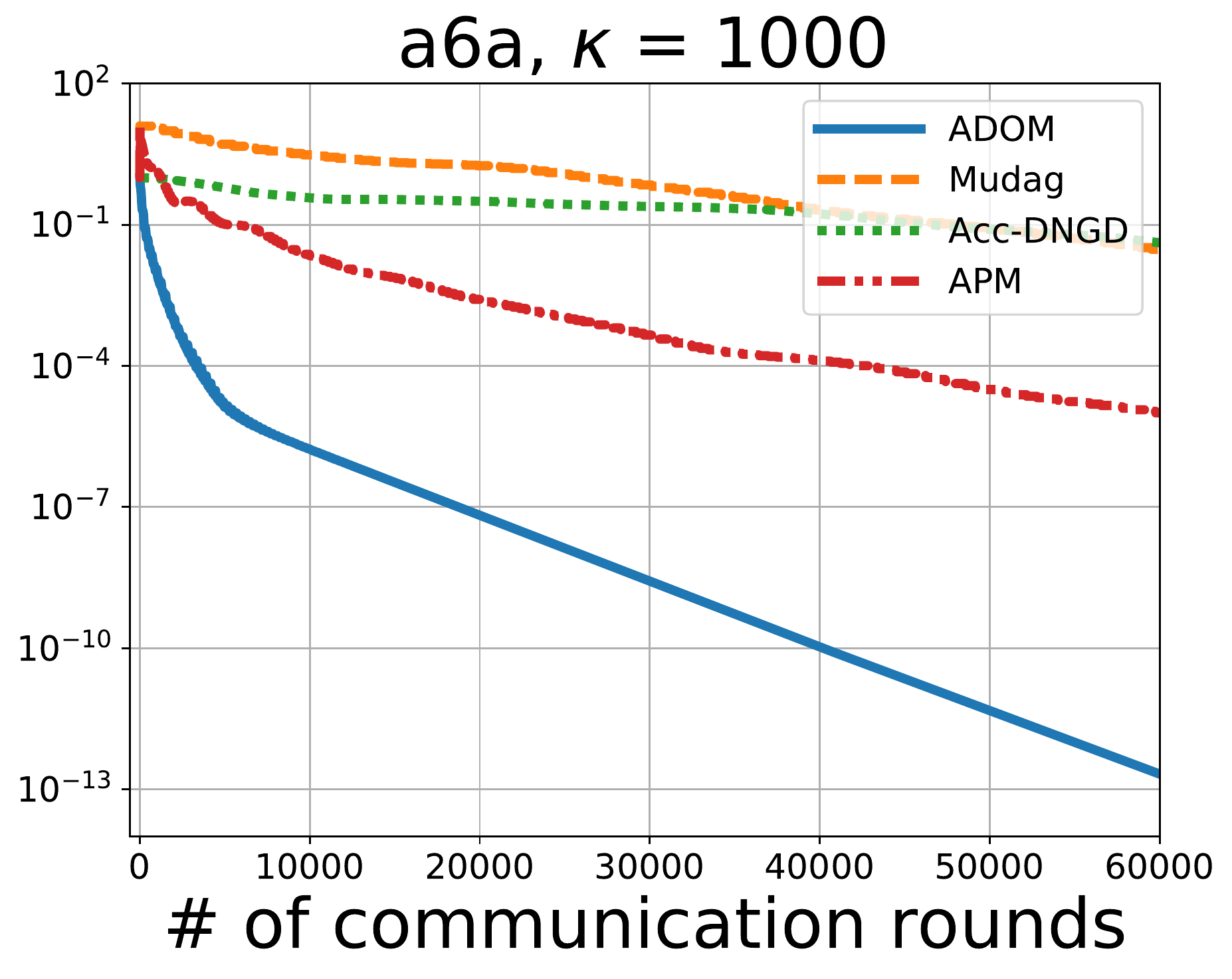}
	\includegraphics[width=0.24\linewidth]{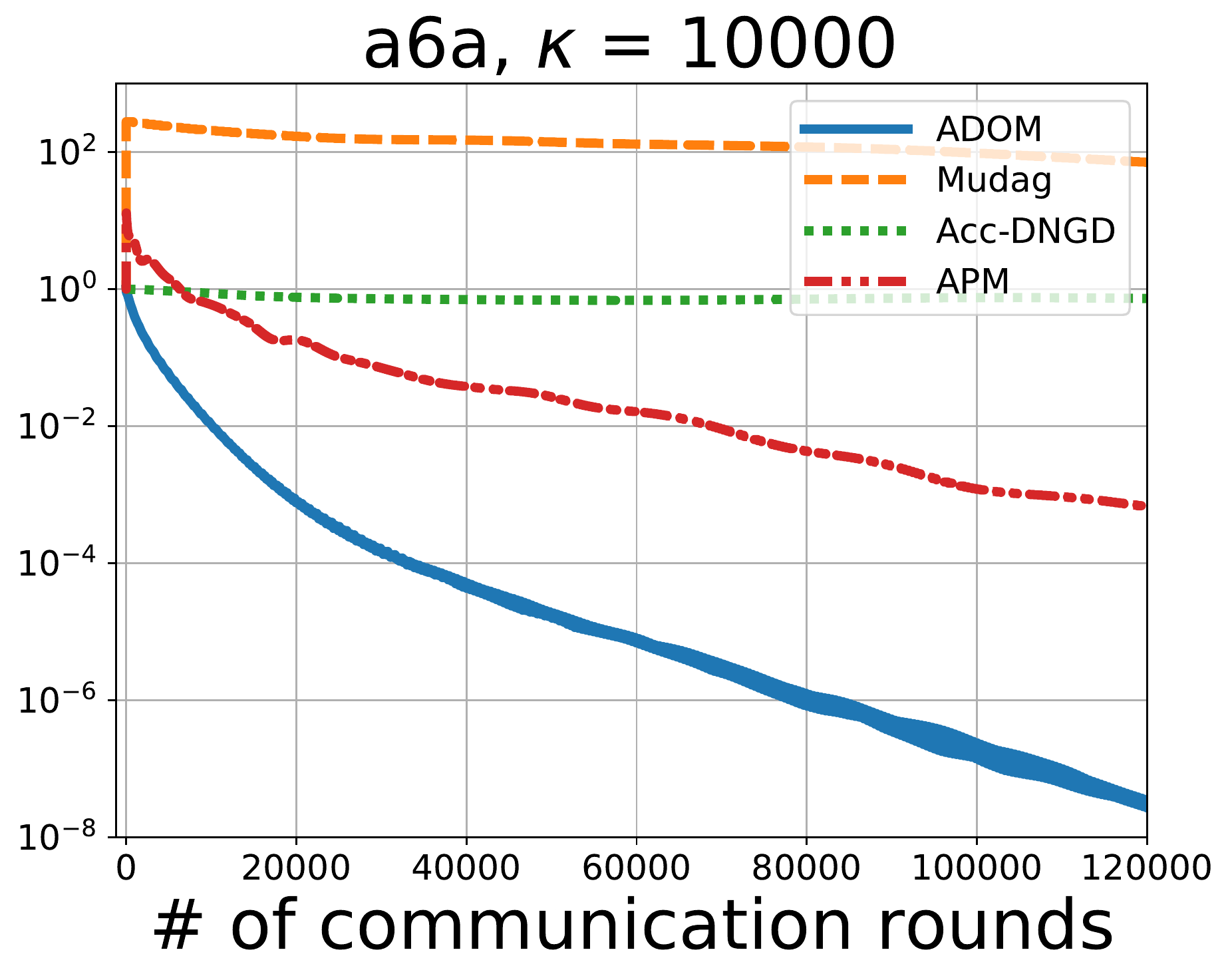}
	
	\includegraphics[width=0.24\linewidth]{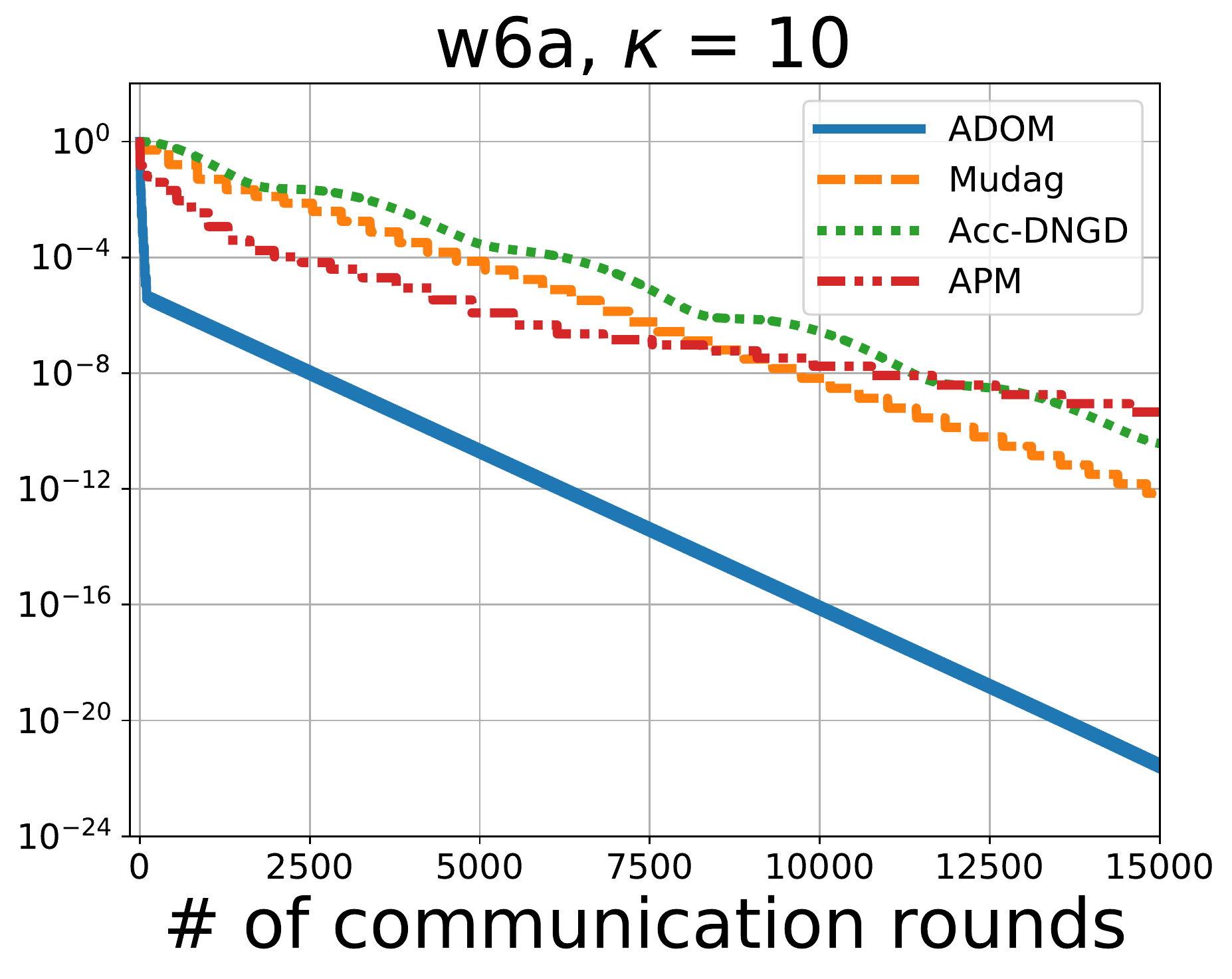}
	\includegraphics[width=0.24\linewidth]{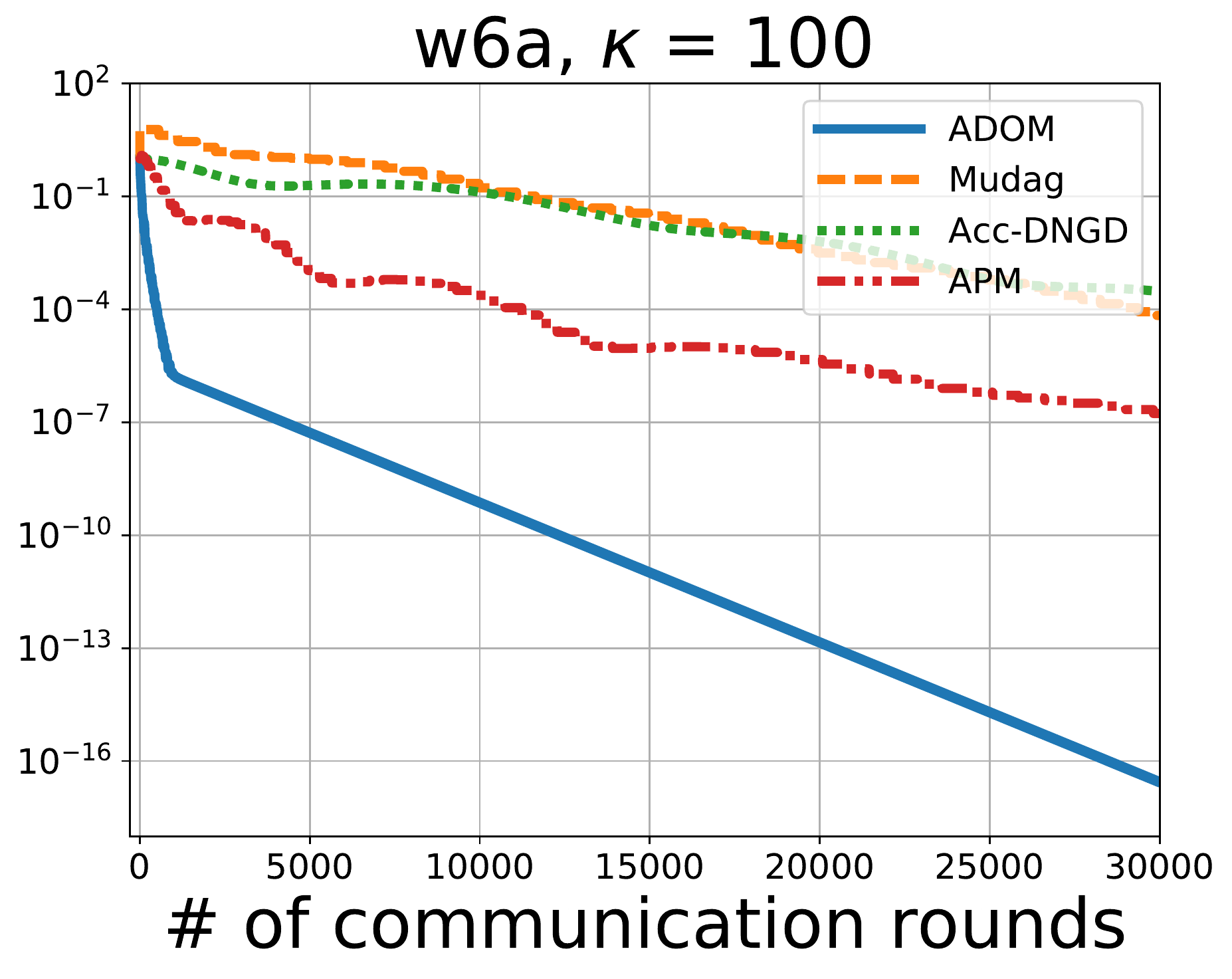}
	\includegraphics[width=0.24\linewidth]{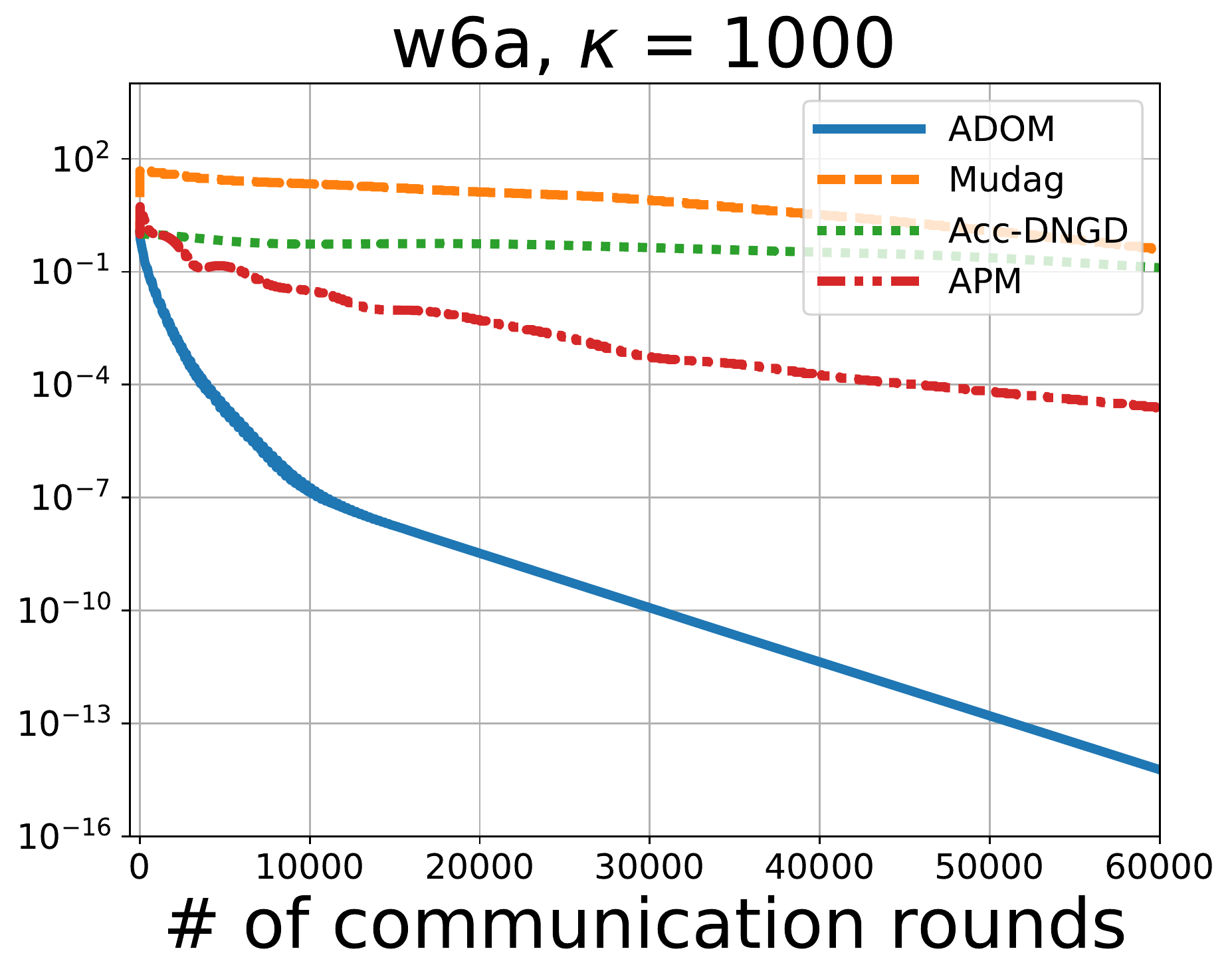}
	\includegraphics[width=0.24\linewidth]{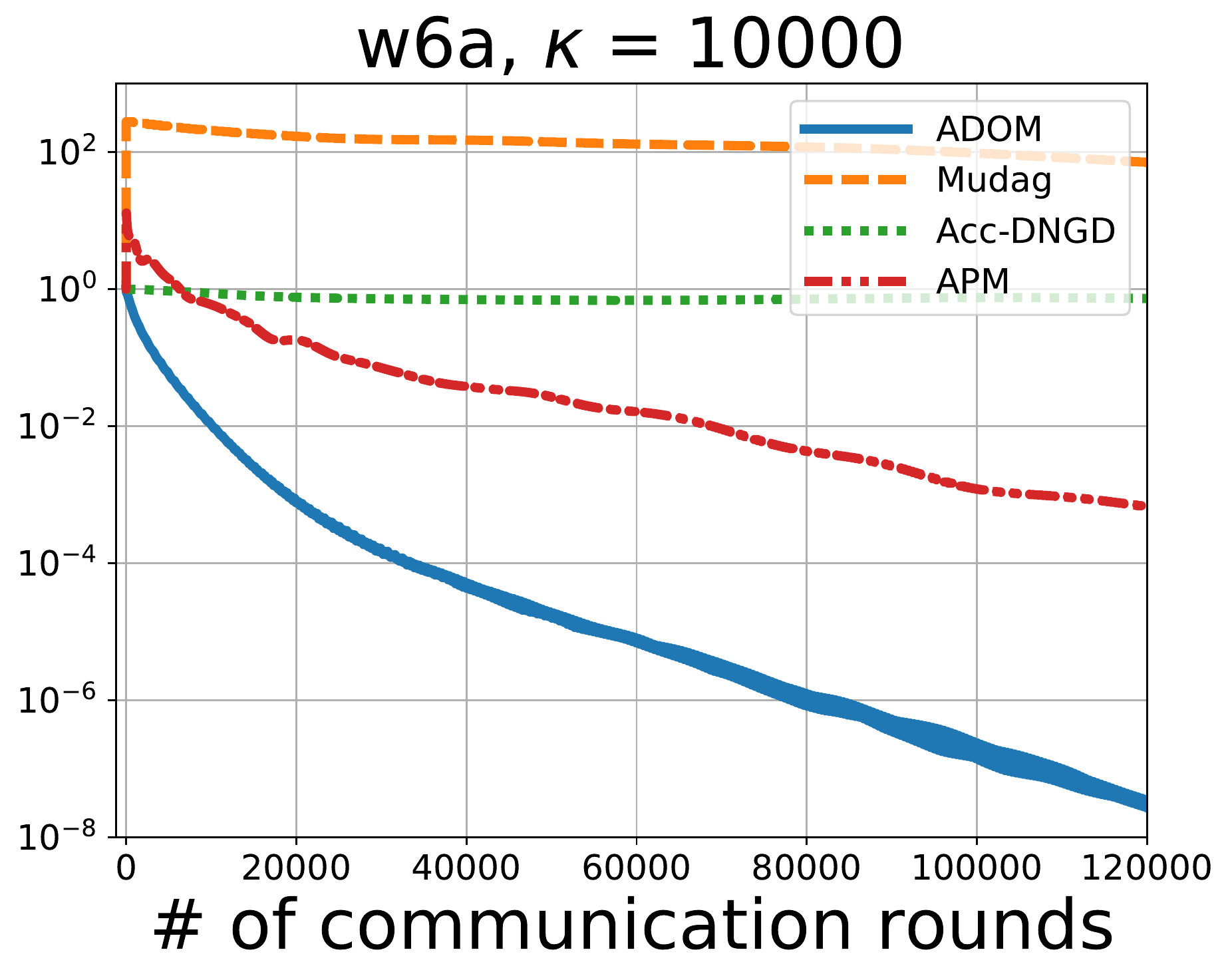}
	
	\includegraphics[width=0.24\linewidth]{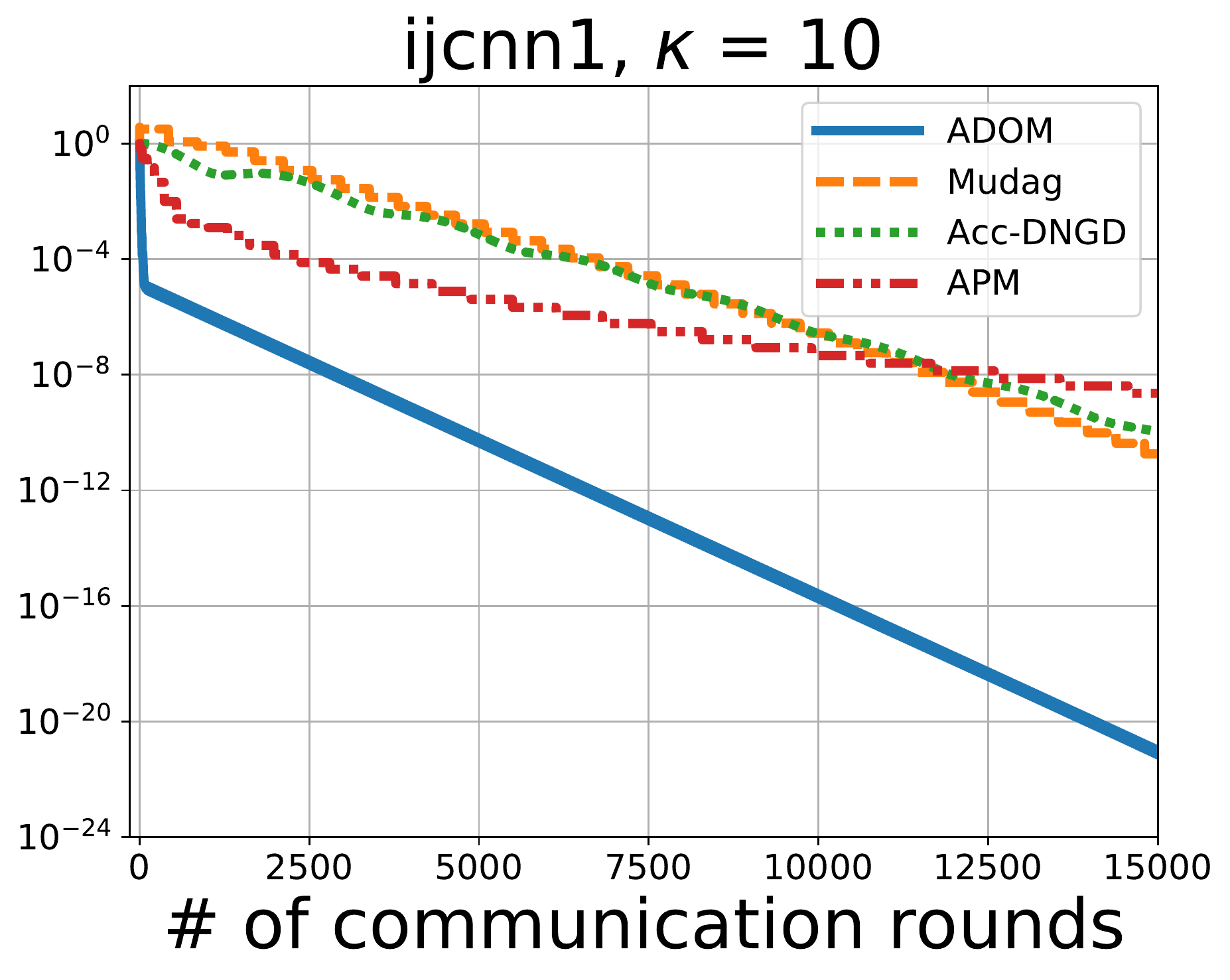}
	\includegraphics[width=0.24\linewidth]{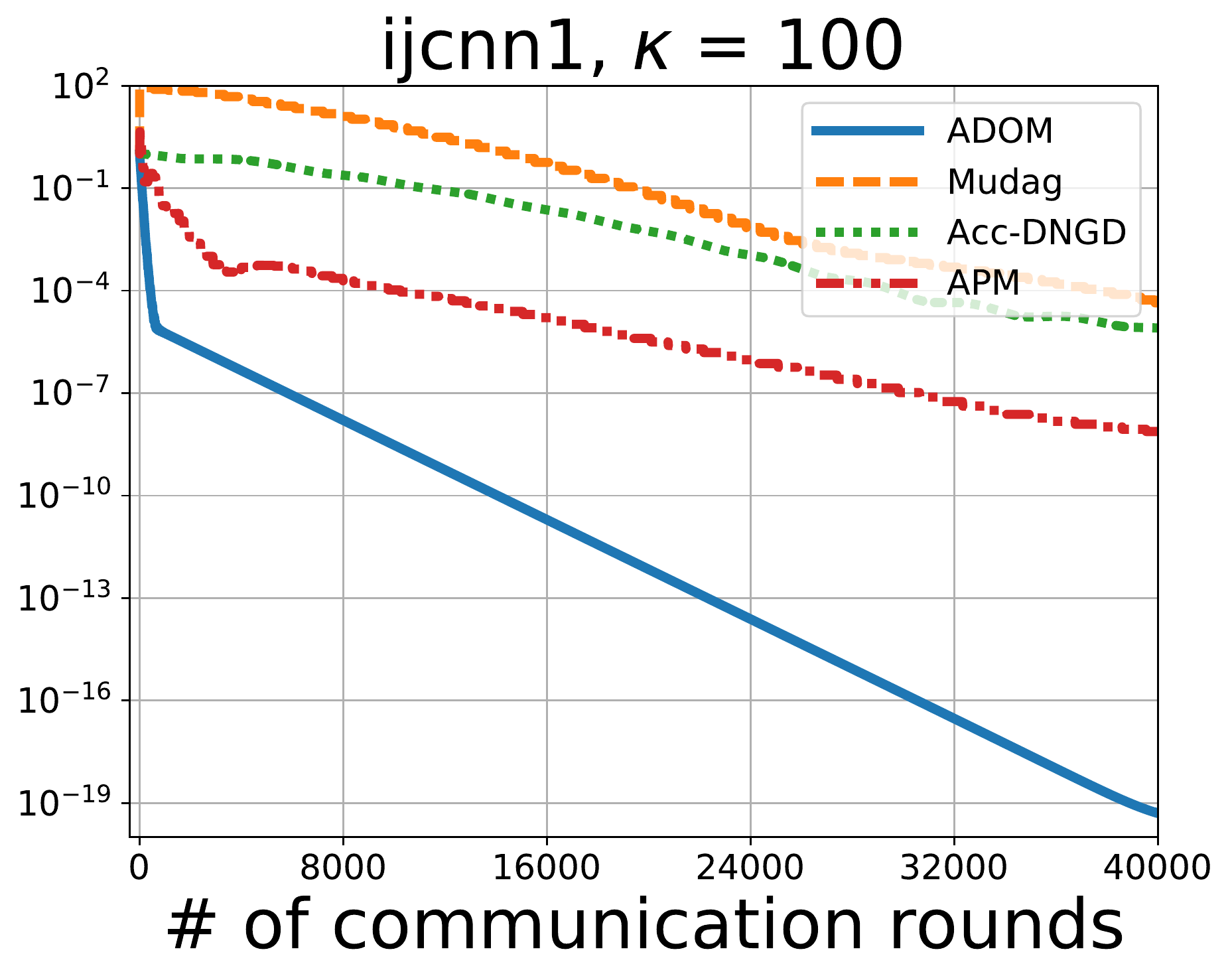}
	\includegraphics[width=0.24\linewidth]{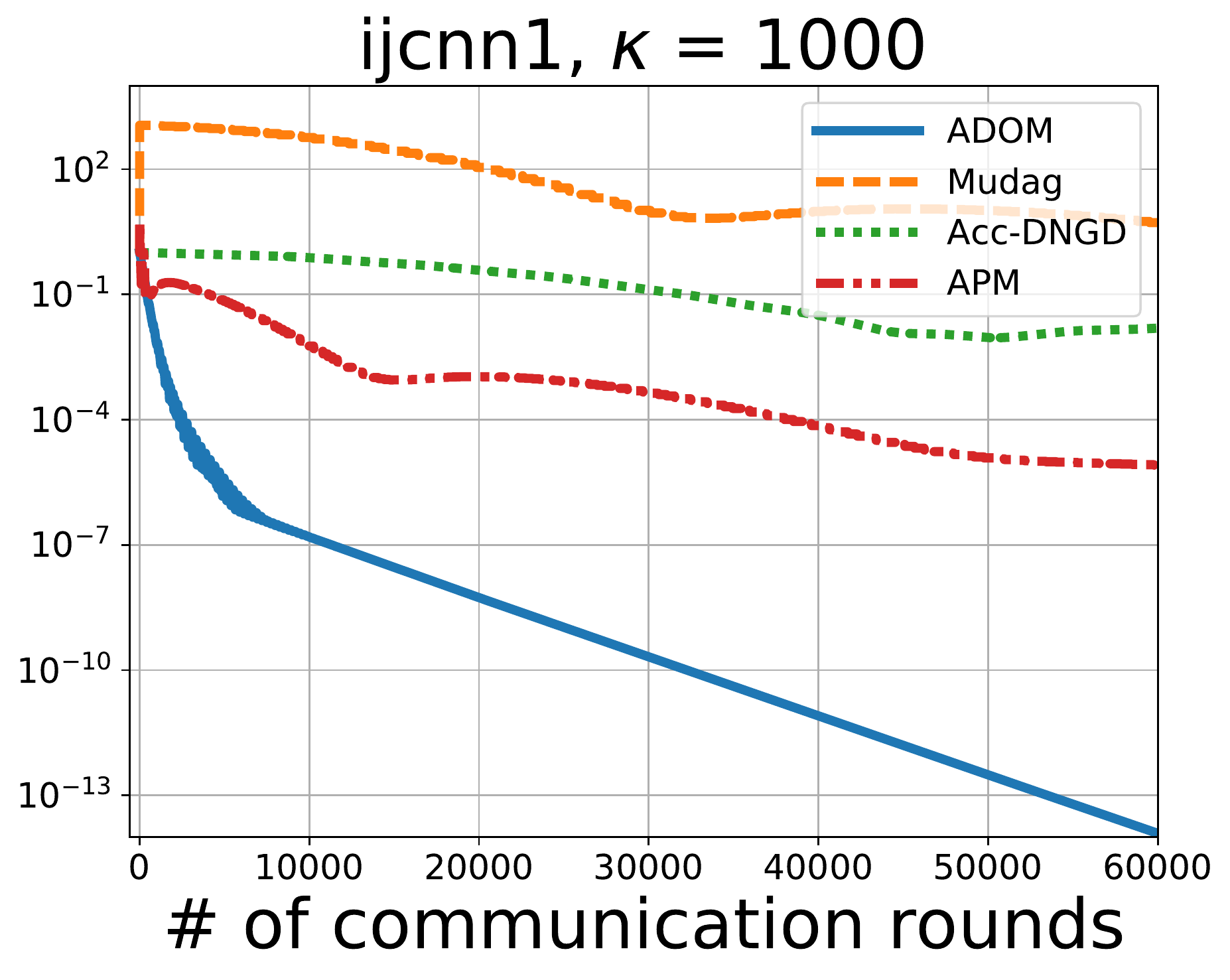}
	\includegraphics[width=0.24\linewidth]{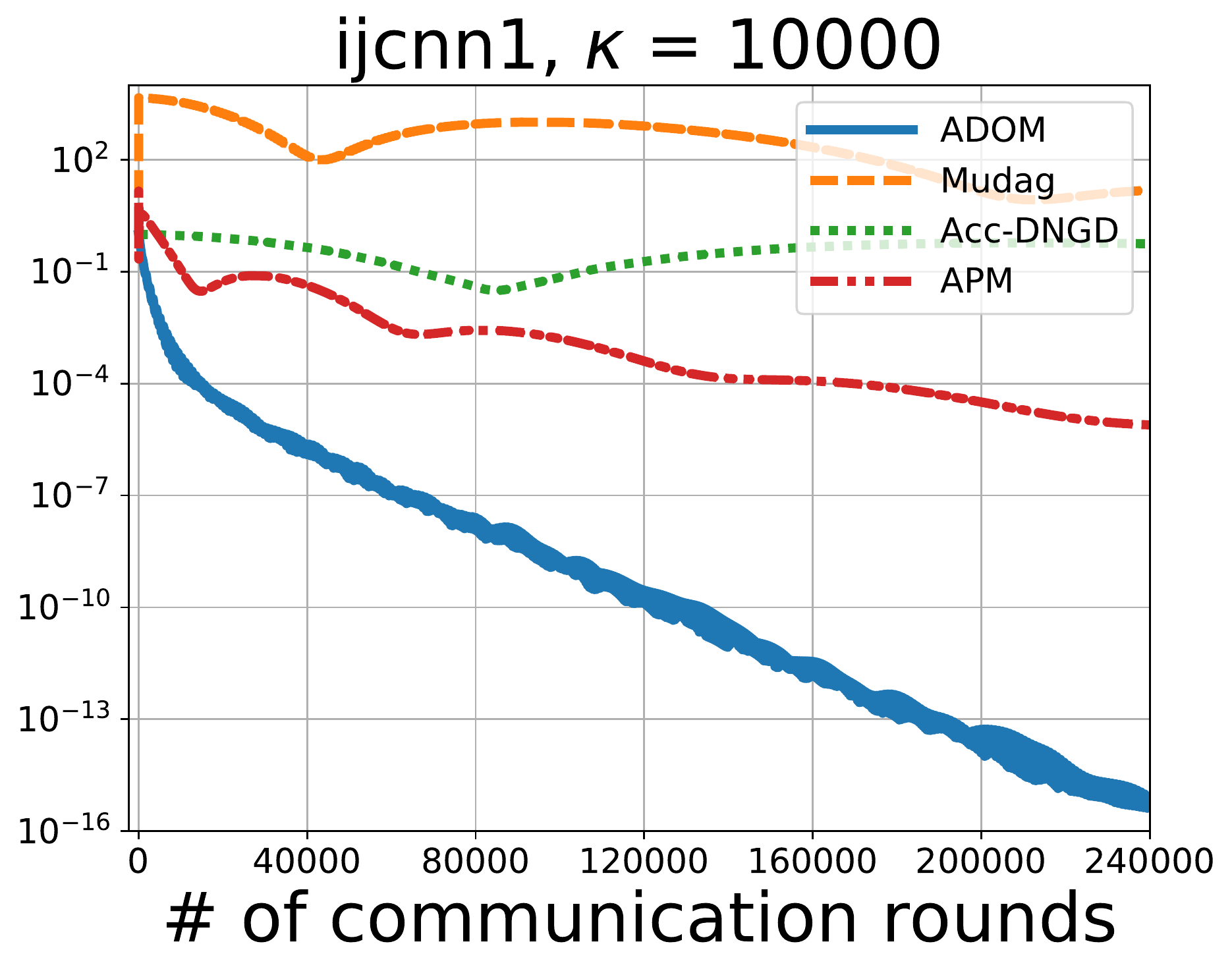}
	
	\caption{Comparison of Mudag, Acc-DNGD, APM and {\sf ADOM} on LIBSVM datasets (\emph{a6a, w6a, ijcnn1} in separate rows) with $\chi \approx 30$, and $\kappa \in \{10,10^2,10^3,10^4\}$ (in different columns).}
	\label{fig:real_kappa}	
\end{figure*}

\begin{figure*}[!htb]
	\centering
	\includegraphics[width=0.24\linewidth]{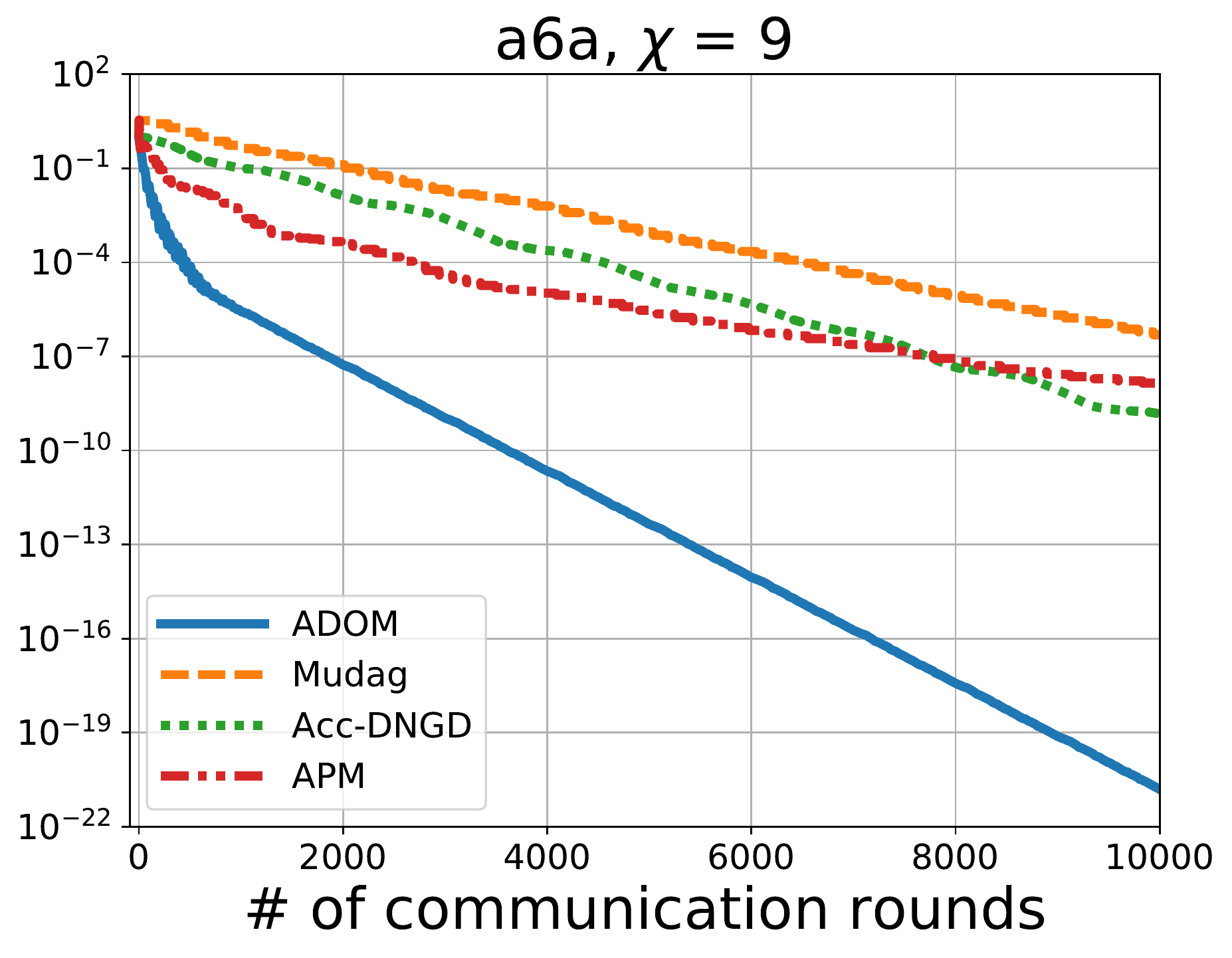}
	\includegraphics[width=0.24\linewidth]{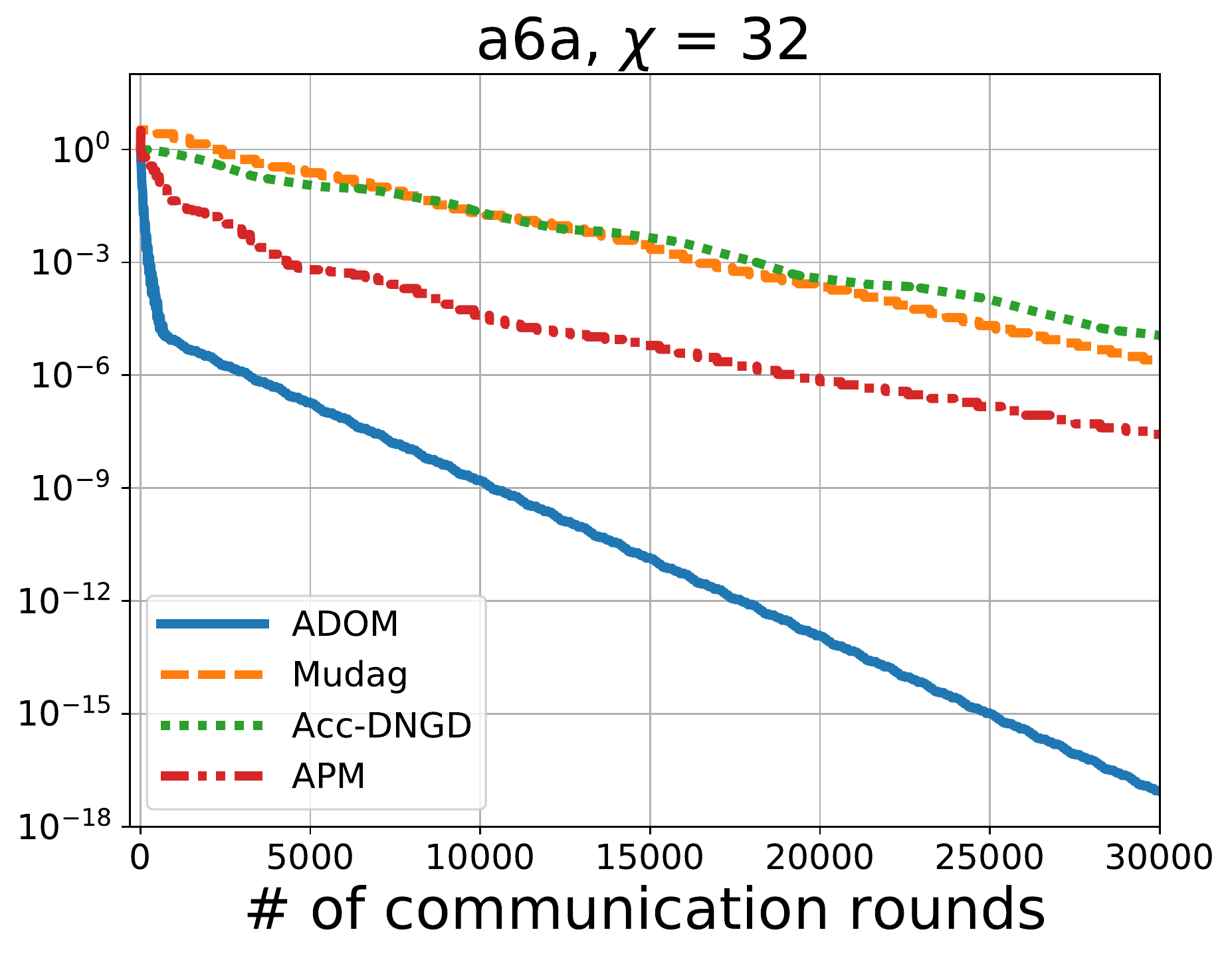}
	\includegraphics[width=0.24\linewidth]{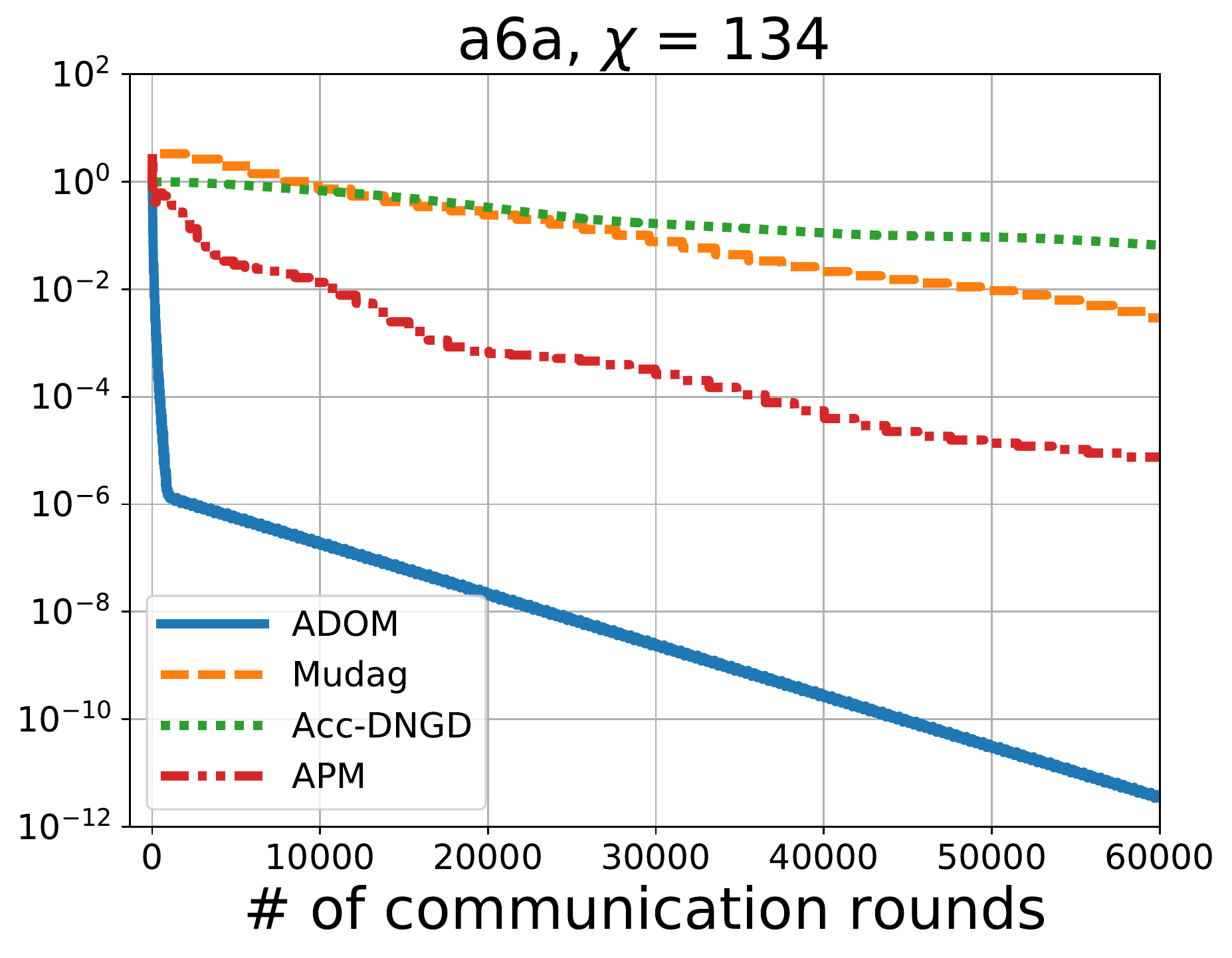}
	\includegraphics[width=0.24\linewidth]{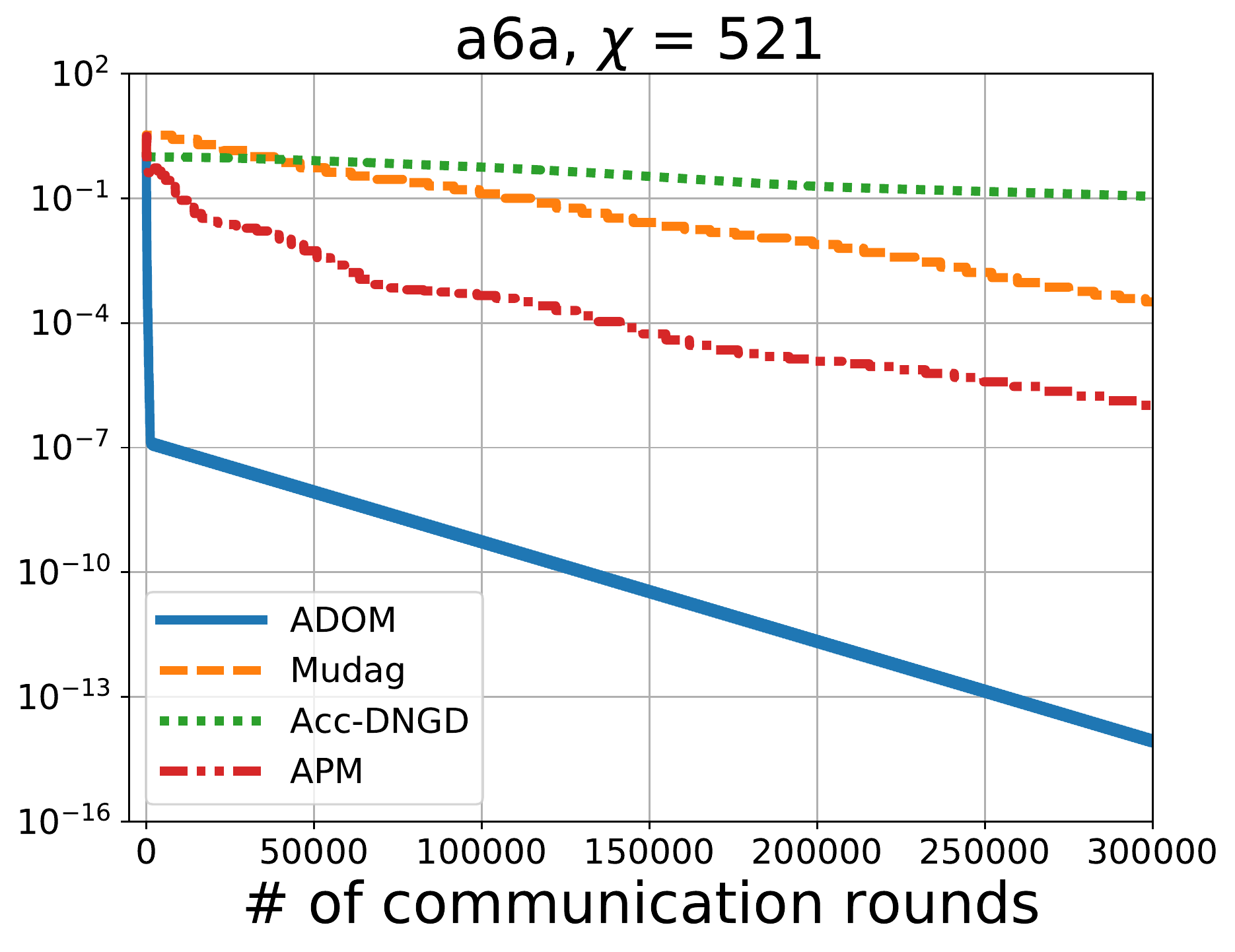}
	
	\includegraphics[width=0.24\linewidth]{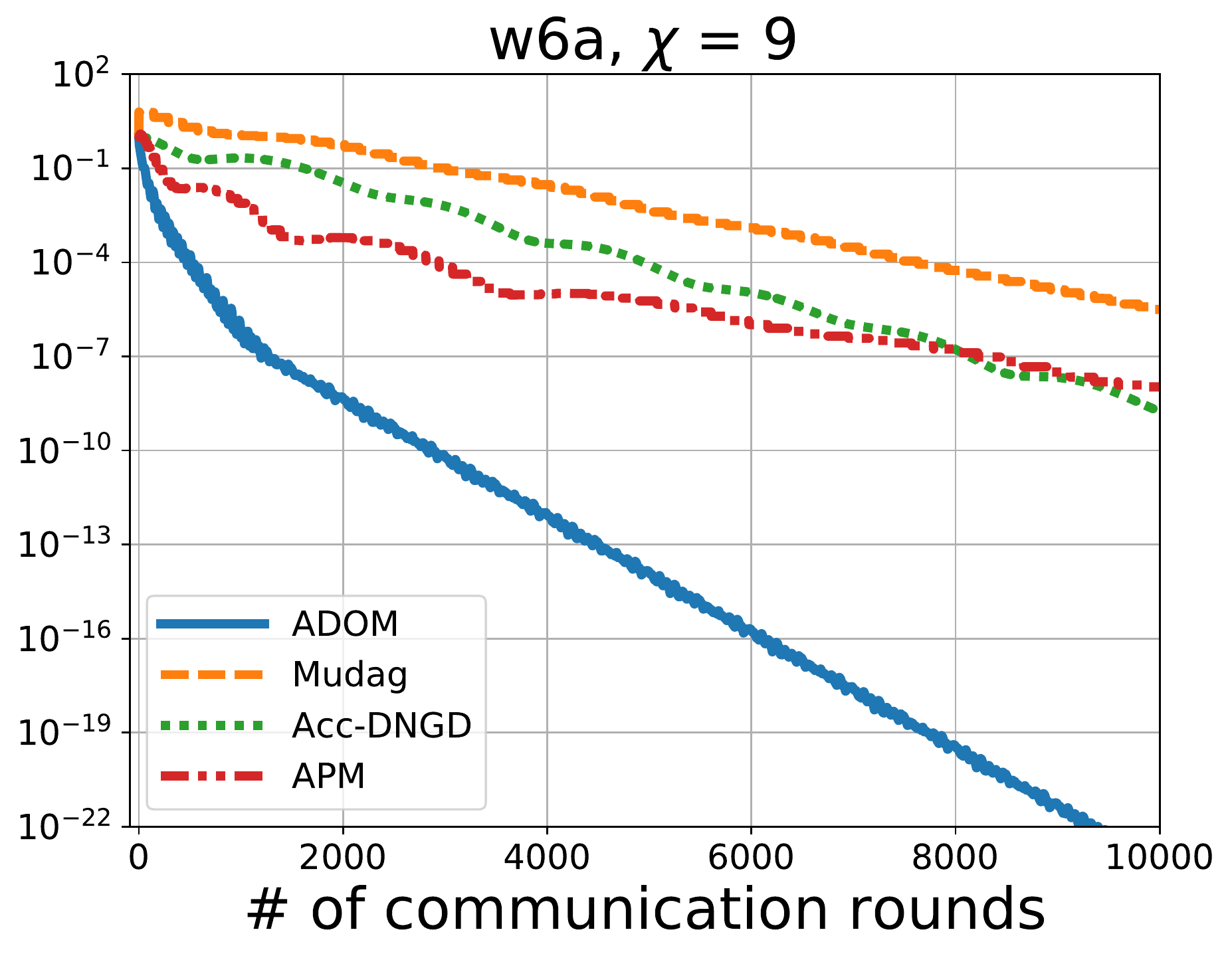}
	\includegraphics[width=0.24\linewidth]{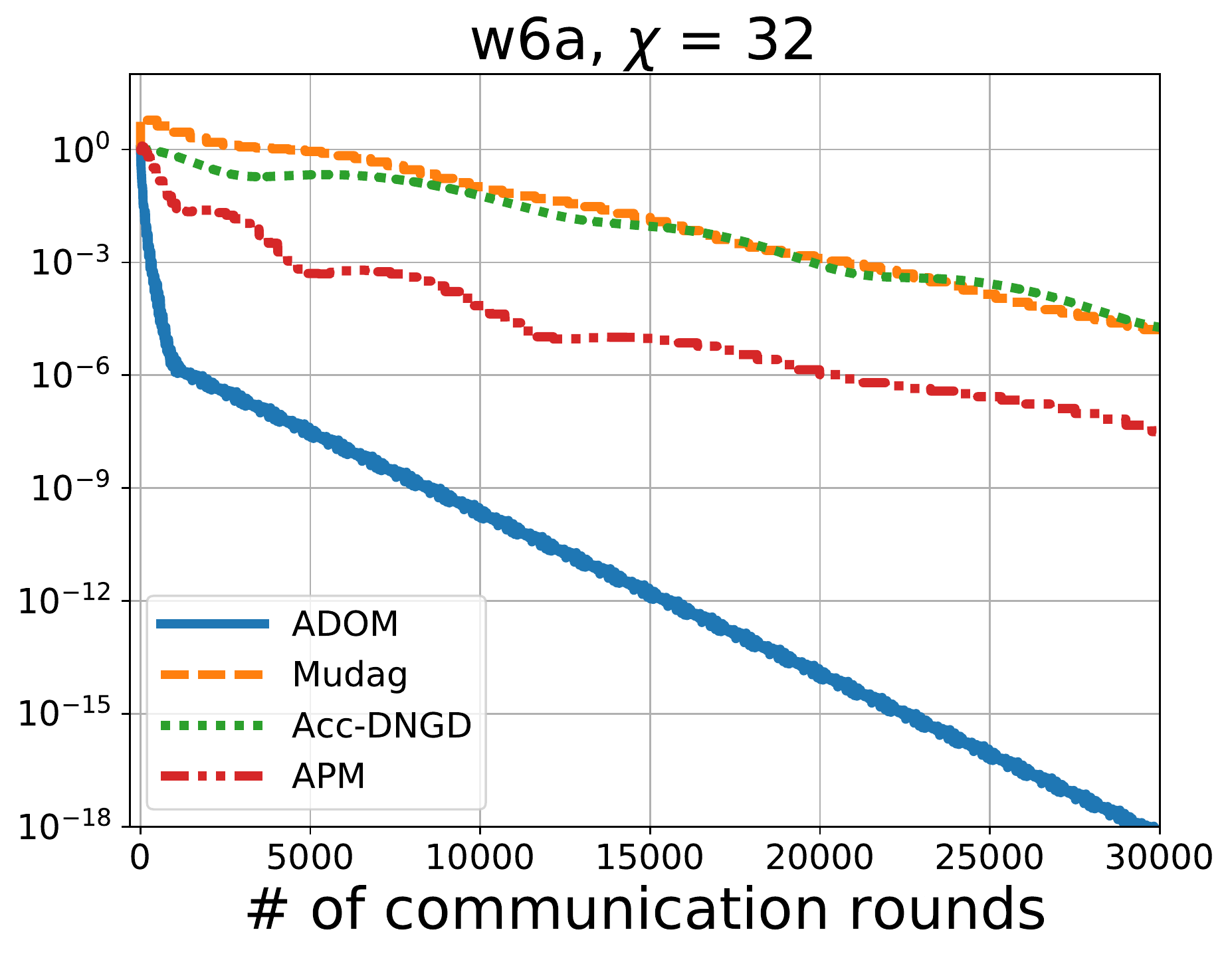}
	\includegraphics[width=0.24\linewidth]{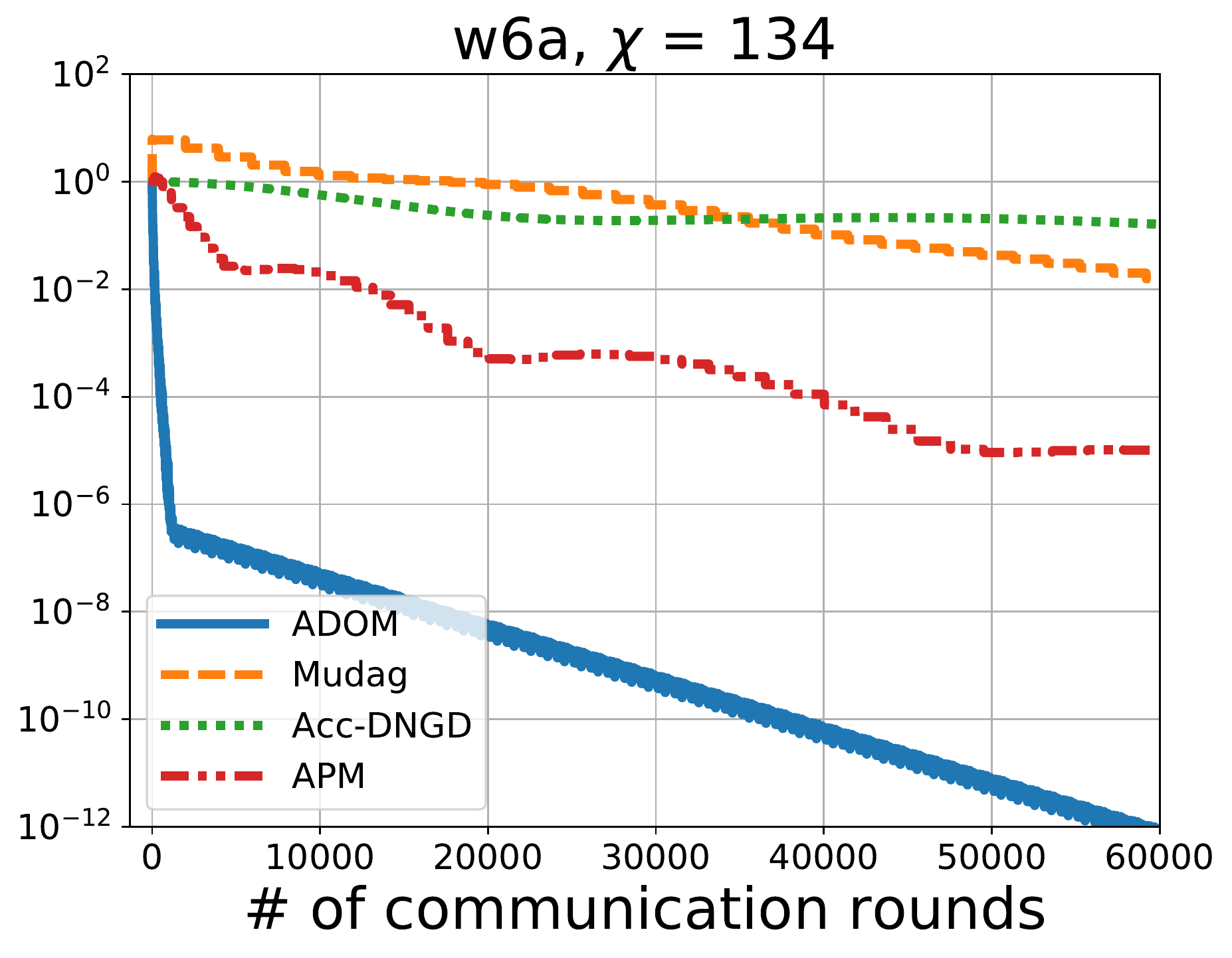}
	\includegraphics[width=0.24\linewidth]{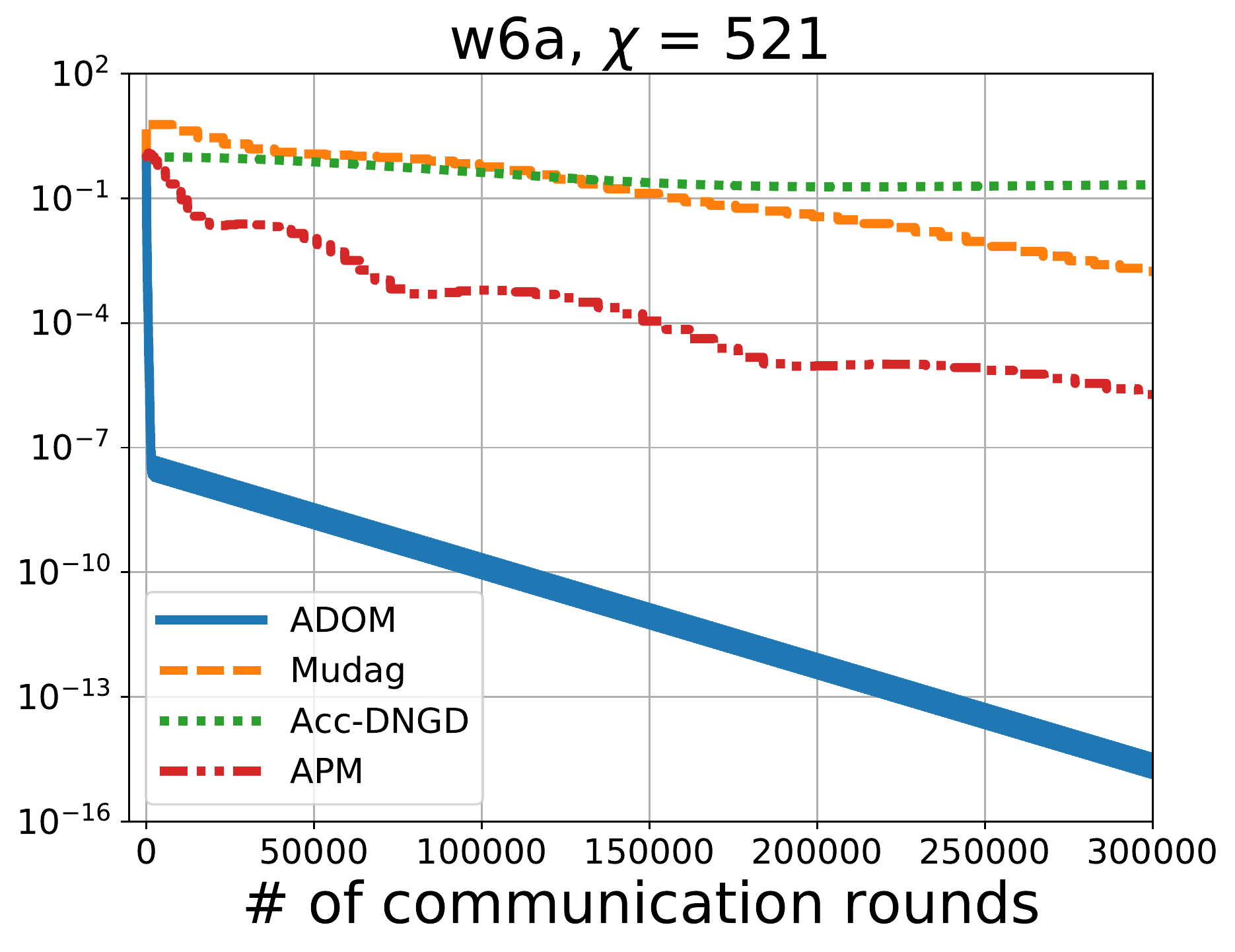}
	
	\includegraphics[width=0.24\linewidth]{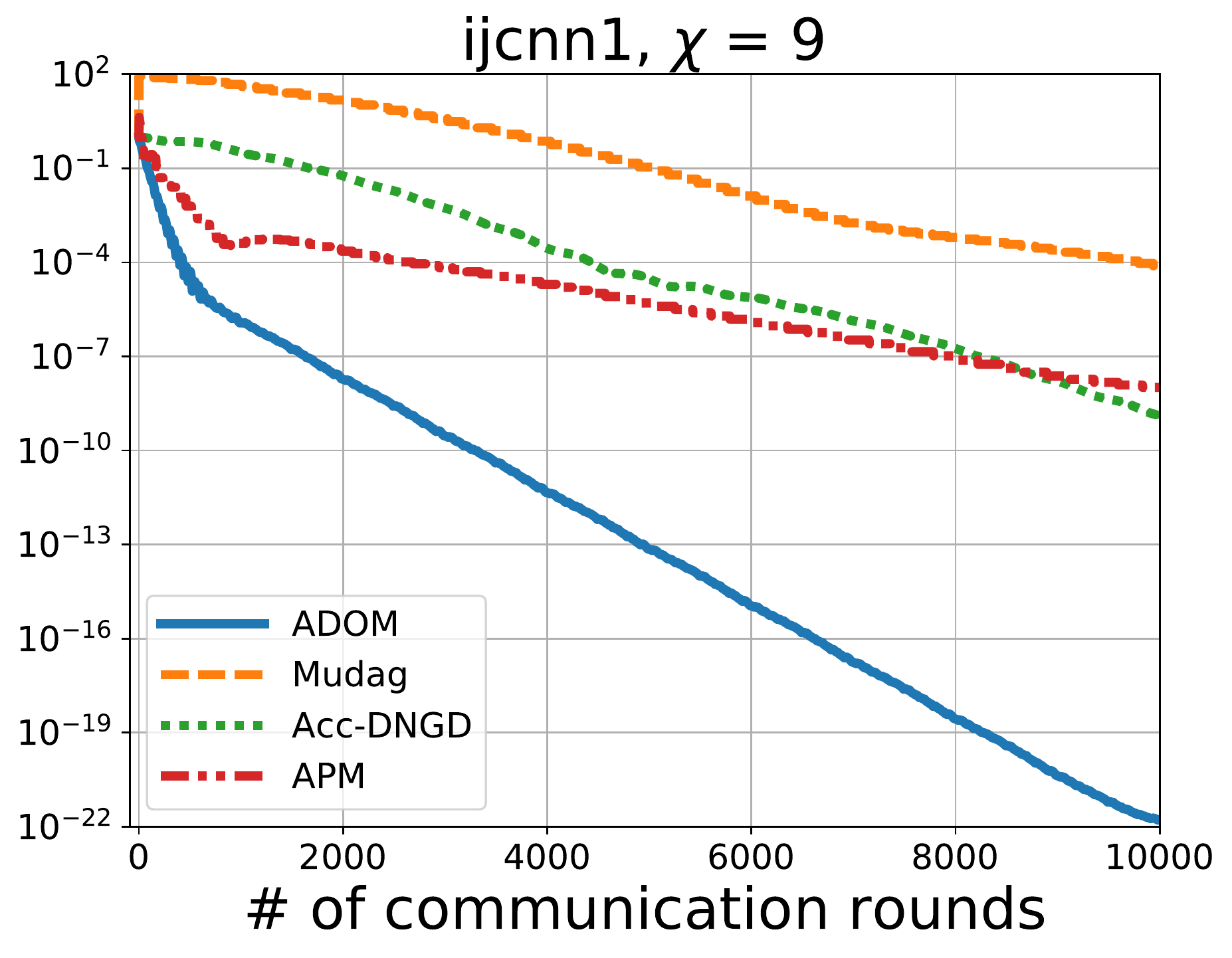}
	\includegraphics[width=0.24\linewidth]{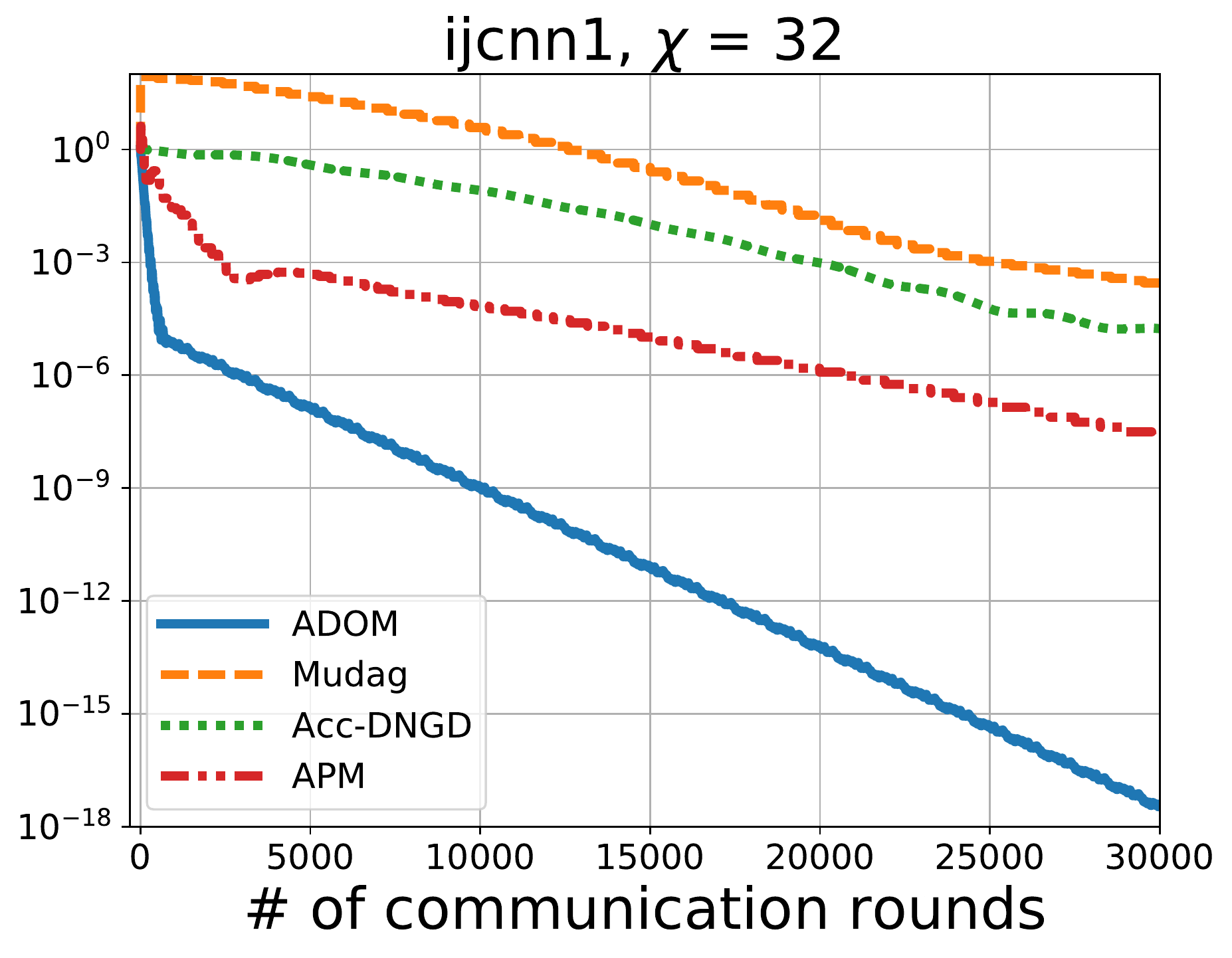}
	\includegraphics[width=0.24\linewidth]{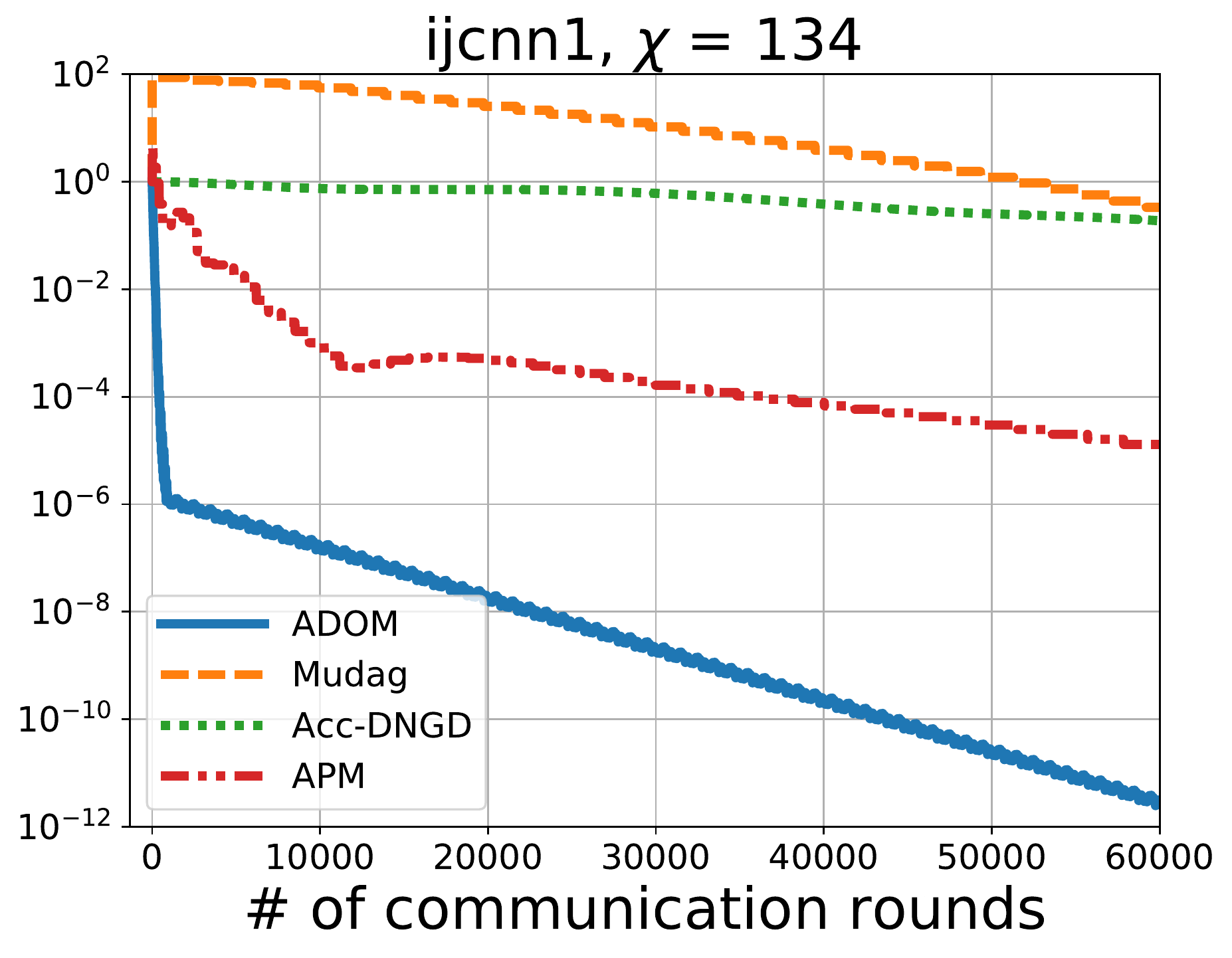}
	\includegraphics[width=0.24\linewidth]{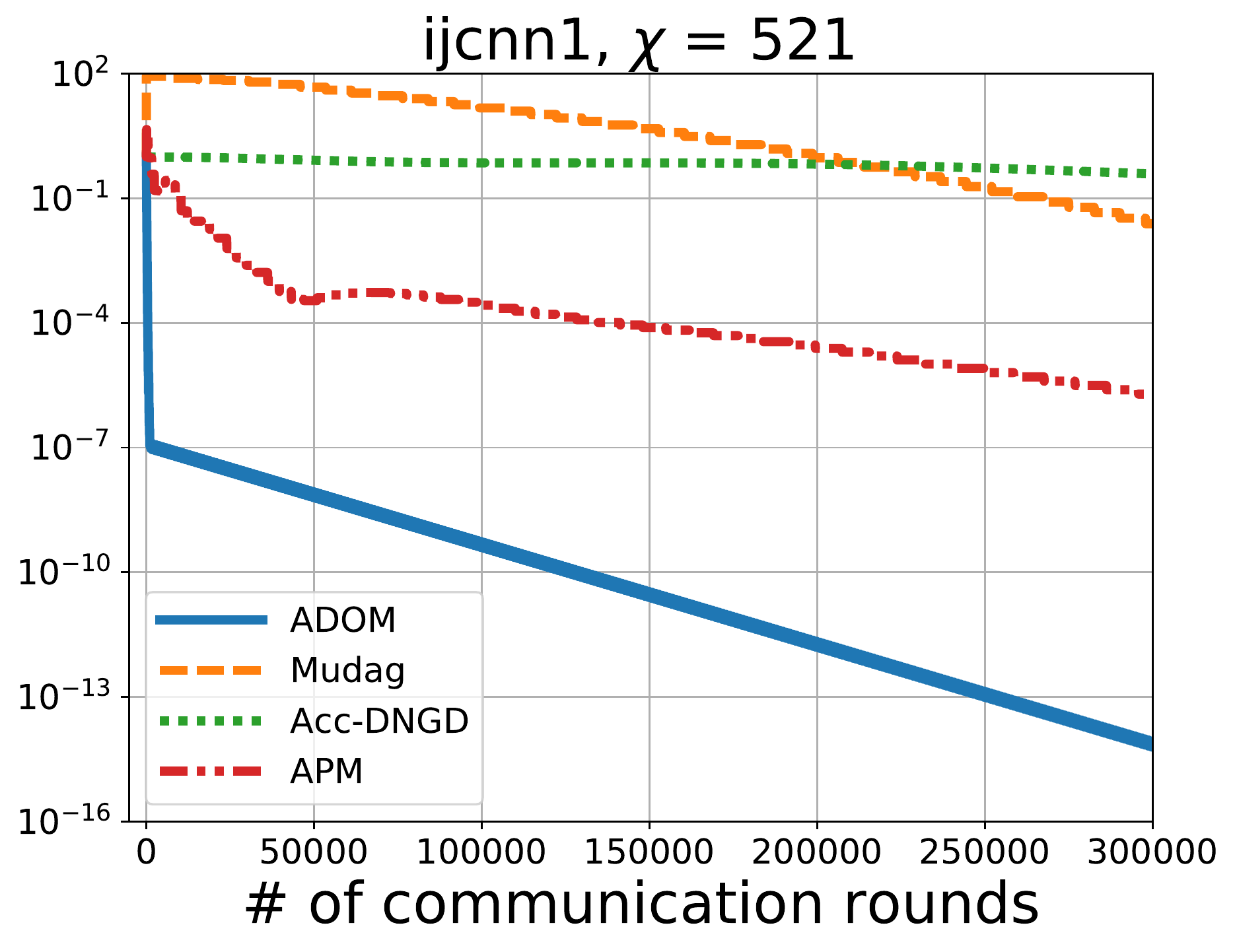}
	
	\caption{Comparison of Mudag, Acc-DNGD, APM and {\sf ADOM} on LIBSVM datasets with $\chi \in \{9, 32,134,521\}$ and $\kappa = 100$.}
	\label{fig:real_chi}	
\end{figure*}

\subsection{Real networks}
%We use dataset \italic{infectious_ct1}
For the next set of experiments, we use a real-world temporal graph dataset \emph{infectious\_ct1} representing social interactions from the TUDataset\footnote{Dataset \emph{infectious\_ct1} is available in {\bf Social networks} section at \href{https://chrsmrrs.github.io/datasets/docs/datasets/}{https://chrsmrrs.github.io/datasets/docs/datasets/}.} collection~\citep{morris2020tudataset}. It consists of $200$ graphs $\cG^k$ on $n=50$ nodes with $\chi \approx 232$. For each $k$, matrix $\mW^k$ is chosen to be the Laplacian of graph $\cG^k$ divided by its largest eigenvalue.

Our experimental results are presented in Figure~\ref{fig:real_networks}. We solve the regularized logistic regression problem~\eqref{eq:log_reg} described in Section~\ref{experiments} for $\kappa \in \{10, 10^4\}$ with the same LIBSVM datasets from Section~\ref{real_data}. Overall, the algorithms perform in a similar fashion as in the case of the synthetic geometric graphs. 
Notice that for smaller $\kappa$ ($\kappa=10$), Mudag outperforms APM  after reaching a certain solution accuracy, while for $\kappa=10^3$ the situation is the opposite. Superiority of {\sf ADOM} persists for every dataset and condition number $\kappa$. We use one  iteration ($T=1$) of GD to calculate $\g F^*(z_g^k)$ in {\sf ADOM}.

\begin{figure*}[!htb]
	\centering
	\includegraphics[width=0.24\linewidth]{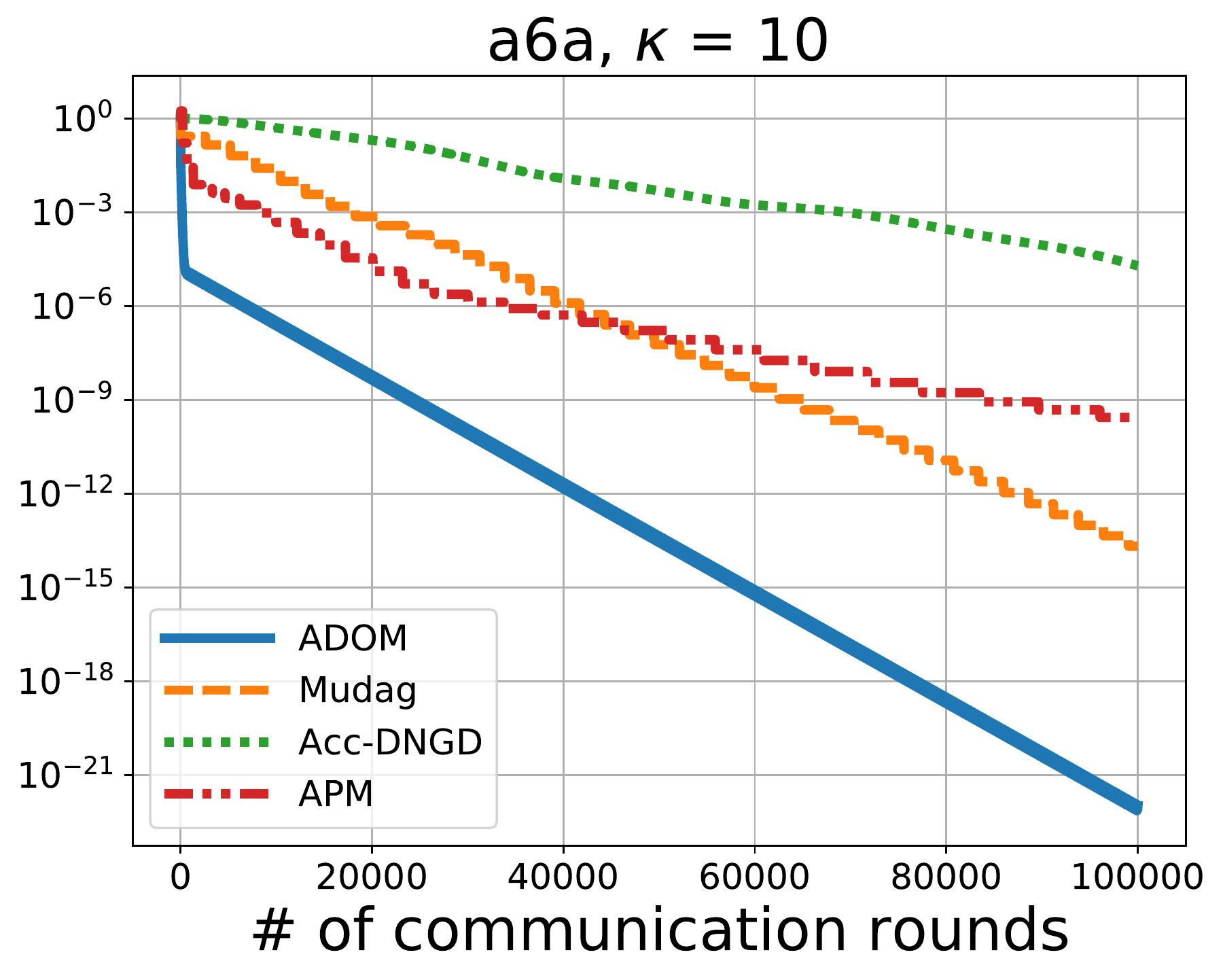}
	\includegraphics[width=0.24\linewidth]{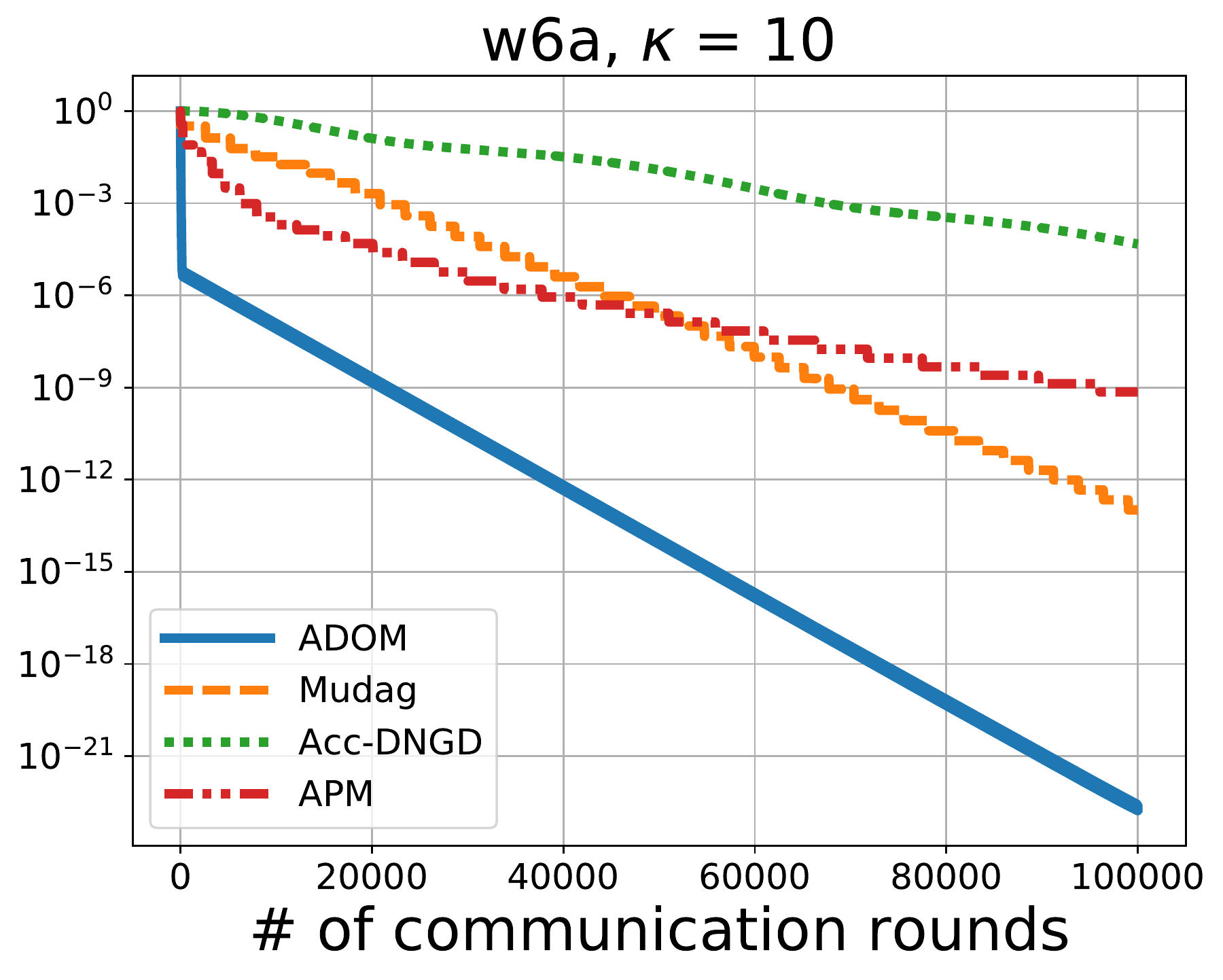}
	\includegraphics[width=0.24\linewidth]{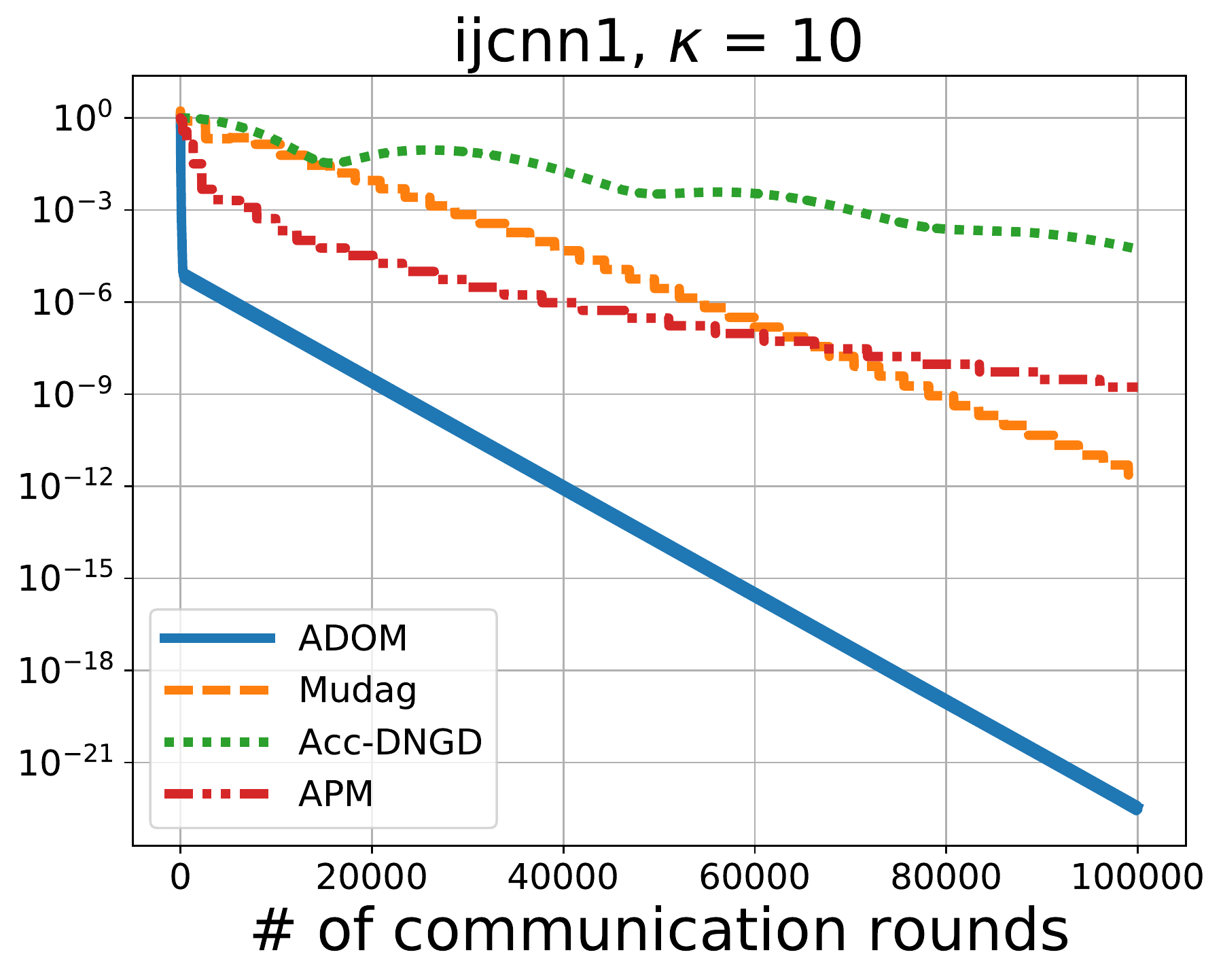}
	
	\includegraphics[width=0.24\linewidth]{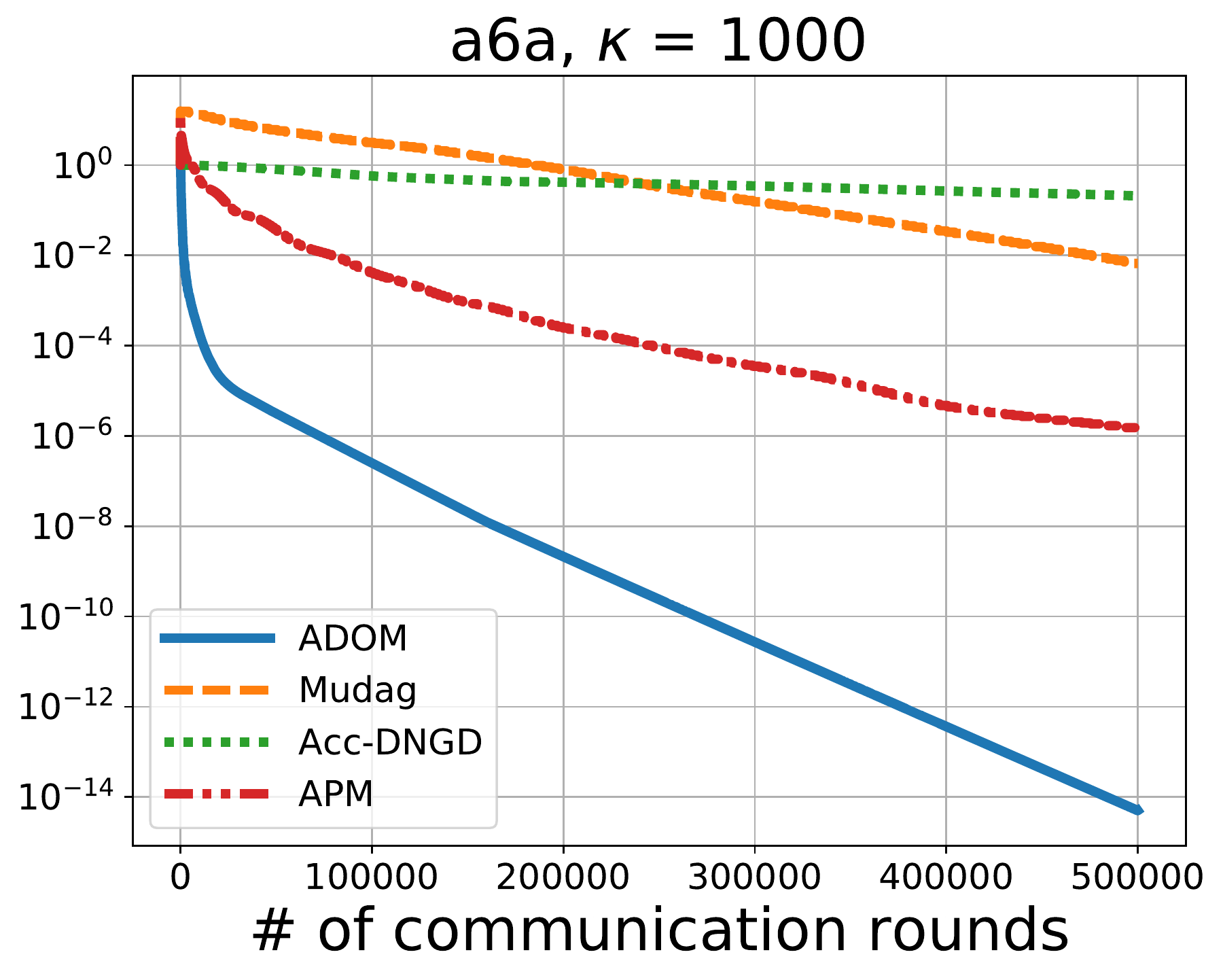}
	\includegraphics[width=0.24\linewidth]{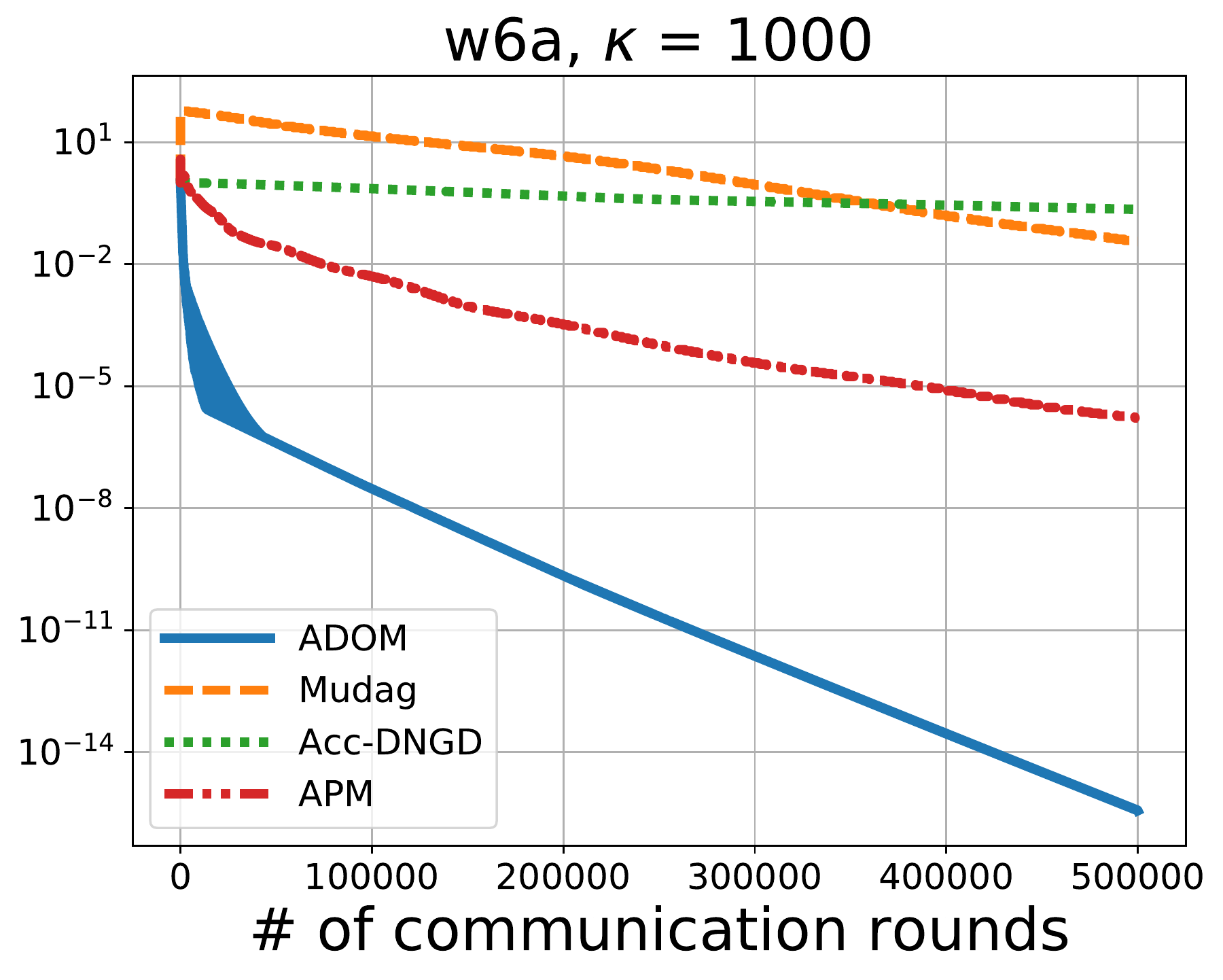}
	\includegraphics[width=0.24\linewidth]{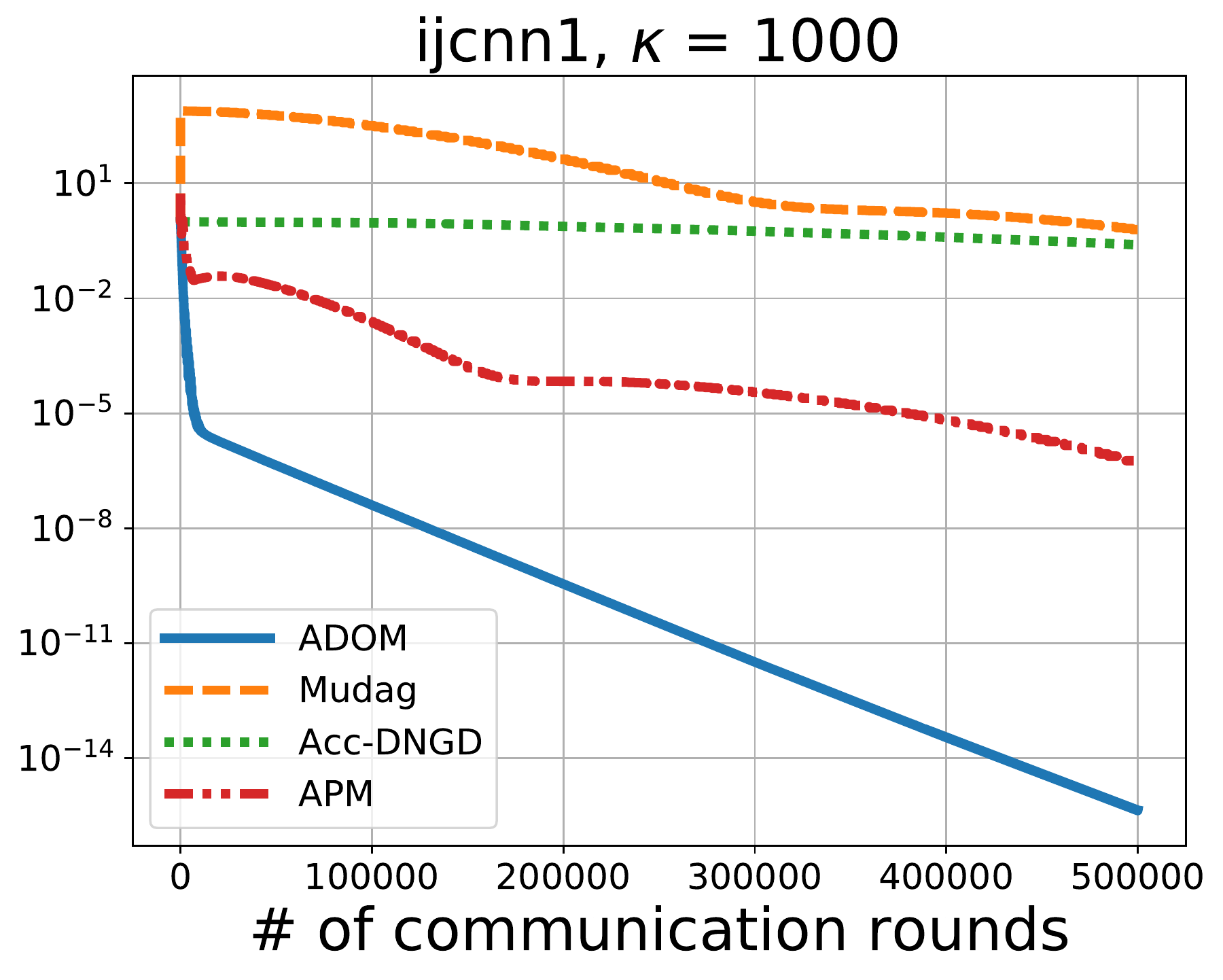}
			
	\caption{Comparison of Mudag, Acc-DNGD, APM and {\sf ADOM} on temporal graph dataset \emph{infectious\_ct1} with $\chi \approx 232$ and $\kappa \in \{10, 10^4\}$.}
	\label{fig:real_networks}	
\end{figure*}

\end{document}